\tikzset{
>=stealth',
help lines/.style={dashed, thick},
axis/.style={<->},
important line/.style={thick},
connection/.style={thick, dotted},
}
\newcommand{\nc}{\newcommand}
\nc{\rnc}{\renewcommand}
\nc{\bb}[1]{{\mathbb #1}}
\nc{\bbA}{\bb{A}}\nc{\bbB}{\bb{B}}\nc{\bbC}{\bb{C}}\nc{\bbD}{\bb{D}}
\nc{\bbE}{\bb{E}}\nc{\bbF}{\bb{F}}\nc{\bbG}{\bb{G}}\nc{\bbH}{\bb{H}}
\nc{\bbI}{\bb{I}}\nc{\bbJ}{\bb{J}}\nc{\bbK}{\bb{K}}\nc{\bbL}{\bb{L}}
\nc{\bbM}{\bb{M}}\nc{\bbN}{\bb{N}}\nc{\bbO}{\bb{O}}\nc{\bbP}{\bb{P}}
\nc{\bbQ}{\bb{Q}}\nc{\bbR}{\bb{R}}\nc{\bbS}{\bb{S}}\nc{\bbT}{\bb{T}}
\nc{\bbU}{\bb{U}}\nc{\bbV}{\bb{V}}\nc{\bbW}{\bb{W}}\nc{\bbX}{\bb{X}}
\nc{\bbY}{\bb{Y}}\nc{\bbZ}{\bb{Z}}
\nc{\mbf}[1]{{\mathbf #1}}
\nc{\bfA}{\mbf{A}}\nc{\bfB}{\mbf{B}}\nc{\bfC}{\mbf{C}}\nc{\bfD}{\mbf{D}}
\nc{\bfE}{\mbf{E}}\nc{\bfF}{\mbf{F}}\nc{\bfG}{\mbf{G}}\nc{\bfH}{\mbf{H}}
\nc{\bfI}{\mbf{I}}\nc{\bfJ}{\mbf{J}}\nc{\bfK}{\mbf{K}}\nc{\bfL}{\mbf{L}}
\nc{\bfM}{\mbf{M}}\nc{\bfN}{\mbf{N}}\nc{\bfO}{\mbf{O}}\nc{\bfP}{\mbf{P}}
\nc{\bfQ}{\mbf{Q}}\nc{\bfR}{\mbf{R}}\nc{\bfS}{\mbf{S}}\nc{\bfT}{\mbf{T}}
\nc{\bfU}{\mbf{U}}\nc{\bfV}{\mbf{V}}\nc{\bfW}{\mbf{W}}\nc{\bfX}{\mbf{X}}
\nc{\bfY}{\mbf{Y}}\nc{\bfZ}{\mbf{Z}}
\nc{\bfa}{\mbf{a}}\nc{\bfb}{\mbf{b}}\nc{\bfc}{\mbf{c}}\nc{\bfd}{\mbf{d}}
\nc{\bfe}{\mbf{e}}\nc{\bff}{\mbf{f}}\nc{\bfg}{\mbf{g}}\nc{\bfh}{\mbf{h}}
\nc{\bfi}{\mbf{i}}\nc{\bfj}{\mbf{j}}\nc{\bfk}{\mbf{k}}\nc{\bfl}{\mbf{l}}
\nc{\bfm}{\mbf{m}}\nc{\bfn}{\mbf{n}}\nc{\bfo}{\mbf{o}}\nc{\bfp}{\mbf{p}}
\nc{\bfq}{\mbf{q}}\nc{\bfr}{\mbf{r}}\nc{\bfs}{\mbf{s}}\nc{\bft}{\mbf{t}}
\nc{\bfu}{\mbf{u}}\nc{\bfv}{\mbf{v}}\nc{\bfw}{\mbf{w}}\nc{\bfx}{\mbf{x}}
\nc{\bfy}{\mbf{y}}\nc{\bfz}{\mbf{z}}
\nc{\mcal}[1]{{\mathcal #1}}
\nc{\calA}{\mcal{A}}\nc{\calB}{\mcal{B}}\nc{\calC}{\mcal{C}}\nc{\calD}{\mcal{D}}
\nc{\calE}{\mcal{E}} \nc{\calF}{\mcal{F}}\nc{\calG}{\mcal{G}}\nc{\calH}{\mcal{H}}
\nc{\calI}{\mcal{I}}\nc{\calJ}{\mcal{J}}\nc{\calK}{\mcal{K}}\nc{\calL}{\mcal{L}}
\nc{\calM}{\mcal{M}}\nc{\calN}{\mcal{N}}\nc{\calO}{\mcal{O}}\nc{\calP}{\mcal{P}}
\nc{\calQ}{\mcal{Q}}\nc{\calR}{\mcal{R}}\nc{\calS}{\mcal{S}}\nc{\calT}{\mcal{T}}
\nc{\calU}{\mcal{U}}\nc{\calV}{\mcal{V}}\nc{\calW}{\mcal{W}}\nc{\calX}{\mcal{X}}
\nc{\calY}{\mcal{Y}}\nc{\calZ}{\mcal{Z}}
\nc{\fA}{\frak{A}}\nc{\fB}{\frak{B}}\nc{\fC}{\frak{C}} \nc{\fD}{\frak{D}}
\nc{\fE}{\frak{E}}\nc{\fF}{\frak{F}}\nc{\fG}{\frak{G}}\nc{\fH}{\frak{H}}
\nc{\fI}{\frak{I}}\nc{\fJ}{\frak{J}}\nc{\fK}{\frak{K}}\nc{\fL}{\frak{L}}
\nc{\fM}{\frak{M}}\nc{\fN}{\frak{N}}\nc{\fO}{\frak{O}}\nc{\fP}{\frak{P}}
\nc{\fQ}{\frak{Q}}\nc{\fR}{\frak{R}}\nc{\fS}{\frak{S}}\nc{\fT}{\frak{T}}
\nc{\fU}{\frak{U}}\nc{\fV}{\frak{V}}\nc{\fW}{\frak{W}}\nc{\fX}{\frak{X}}
\nc{\fY}{\frak{Y}}\nc{\fZ}{\frak{Z}}
\nc{\fa}{\frak{a}}\nc{\fb}{\frak{b}}\nc{\fc}{\frak{c}} \nc{\fd}{\frak{d}}
\nc{\fe}{\frak{e}}\nc{\fFf}{\frak{f}}\nc{\fg}{\frak{g}}\nc{\fh}{\frak{h}}
\nc{\fri}{\frak{i}}\nc{\fj}{\frak{j}}\nc{\fk}{\frak{k}}\nc{\fl}{\frak{l}}
\nc{\fm}{\frak{m}}\nc{\fn}{\frak{n}}\nc{\fo}{\frak{o}}\nc{\fp}{\frak{p}}
\nc{\fq}{\frak{q}}\nc{\fr}{\frak{r}}\nc{\fs}{\frak{s}}\nc{\ft}{\frak{t}}
\nc{\fu}{\frak{u}}\nc{\fv}{\frak{v}}\nc{\fw}{\frak{w}}\nc{\fx}{\frak{x}}
\nc{\fy}{\frak{y}}\nc{\fz}{\frak{z}}
\newtheorem{theorem}{Theorem}[section]
\newtheorem{lemma}[theorem]{Lemma}
\newtheorem{corollary}[theorem]{Corollary}
\newtheorem{prop}[theorem]{Proposition}
\theoremstyle{definition}
\newtheorem{definition}[theorem]{Definition}
\newtheorem{example}[theorem]{Example}
\newtheorem{remark}[theorem]{Remark}
\newtheorem{thm}{Theorem}
 \DeclareMathOperator{\id}{id}
 \DeclareMathOperator{\supp}{supp}
\DeclareMathOperator{\Hom}{{Hom}}
 \DeclareMathOperator{\End}{End}
\DeclareMathOperator{\res}{res}
\DeclareMathOperator{\HH}{HH}
\newcommand{\cQ}{\mathcal {Q}}
\newcommand{\fraks}{\mathfrak{s}}
\newcommand{\al}{\alpha}
\newcommand{\la}{\lambda}
\newcommand{\ka}{\kappa}
\newcommand{\de}{\delta}
\newcommand{\frakt}{\mathfrak{t}}
\newcommand{\be}{\beta}
\DeclareMathOperator{\pr}{pr}
\newcommand{\aff}{{\operatorname{a}}}
\newcommand{\dia}{\diamond}
\newcommand{\frakX}{\mathfrak{X}}
\newcommand{\frakY}{\mathfrak{Y}}
\DeclareMathOperator{\loc}{{loc}}
\newcommand{\hatot}{\hat\otimes}
\newcommand{\lag}{\langle}
\newcommand{\rag}{\rangle}
\def\Hom{\operatorname{Hom}}
\def\res{\operatorname{res}}
 \gdef\Young(#1){\hbox{$\vcenter
 {\mathcode`,="8000\mathcode`|="8000
  \def,{\global\advance\cols by 1 &}%
  \def|{\cr
        \multispan{\the\cols}\hrulefill\cr
        &\global\cols=2 }%
  \offinterlineskip\everycr{}\tabskip=0pt
  \dimen0=\ht\strutbox \advance\dimen0 by \dp\strutbox
  \halign
   {\vrule height \ht\strutbox depth \dp\strutbox##
    &&\hbox to \dimen0{\hss$##$\hss}\vrule\cr
    \noalign{\hrule}&\global\cols=2 #1\crcr
    \multispan{\the\cols}\hrulefill\cr%
   }
 }$}}
\author[R.~Xiong]{Rui~Xiong}
\address{Department of Mathematics and Statistics, University of Ottawa, 150 Louis-Pasteur, Ottawa, ON, K1N 6N5, Canada}
\email{rxiong@uottawa.ca}
\author[K.~Zainoulline]{Kirill~Zainoulline}
\address{Department of Mathematics and Statistics, University of Ottawa, 150 Louis-Pasteur, Ottawa, ON, K1N 6N5, Canada}
\email{kirill@uottawa.ca}
\author[C.~Zhong]{Changlong~Zhong}
\address{State University of New York at Albany, 1400 Washington Avenue, Albany, NY 12222, USA}
\email{czhong@albany.edu}
\title{On the formal Peterson subalgebra and its dual}
\begin{document}

\begin{abstract} 
In the present notes, we study a generalization of the Peterson subalgebra to an oriented (generalized) cohomology theory which we call the formal Peterson subalgebra. Observe that by recent results of Zhong the dual of the formal Peterson algebra provides an algebraic model for the  oriented cohomology of the affine Grassmannian.

Our first result shows that the centre of the formal affine Demazure algebra generates the formal Peterson subalgebra. Our second observation is motivated by the Peterson conjecture. We show that a certain localization of the formal Peterson subalgebra for the extended Dynkin diagram of type $\hat A_1$ provides an algebraic model for `quantum' oriented  cohomology of the projective line. Our last result can be viewed as an extension of the previous results on Hopf algebroids of structure algebras of moment graphs to the case of affine root systems.
We prove that the dual of the formal Peterson subalgebra (an oriented cohomology of the affine Grassmannian) is the $0$th Hochshild homology of the formal affine Demazure algebra.
\end{abstract}

\maketitle


\section{Introduction}
Equivariant cohomology of an affine Grassmannian has been a topic of intensive investigations for decades. For the small torus action it can be identified with a certain commutative subalgebra of the associated nil-Hecke algebra of a Kac-Moody root system called the Peterson subalgebra \cite{P97}. One of its remarkable properties says that after taking localization it becomes isomorphic to the (small) quantum cohomology of the respective finite part (flag variety) \cite{P97,LS10}. A parallel isomorphism for the $K$-theory was conjectured and discussed in \cite{LSS10,LLMS18} and is known as the Peterson Conjecture. This conjecture was recently proven by Kato in \cite{K18} using a language of semi-infinite flag varieties.

In the present notes, we study a generalization of the Peterson subalgebra to an oriented (generalized) cohomology theory $h(\text{-})$, e.g. algebraic cobordism $\Omega(\text{-})$ of Levine-Morel. Such a cohomology theory was first introduced and studied in \cite{LM07}, and extended to the torus-equivariant setup in \cite{Kr12,HM13} for arbitrary smooth varieties. As for flag varieties associated to root systems, it can be described using the Kostant-Kumar localization approach (for finite root systems see \cite{CZZ1, CZZ2, CZZ3}, and for Kac-Moody see \cite{CZZ4}). The respective generalization of the nil Hecke algebra is called the formal affine Demazure algebra (FADA). The generalization of the Peterson algebra introduced recently in \cite{Z23} which we call a formal Peterson subalgebra is then the centralizer of the equivariant coefficient ring in the small torus FADA.

To state our first result,  let $R=h(pt)$ denote the coefficient ring of the oriented theory $h$, let $S=h_T(pt)$ denote the respective small torus $T$ equivariant coefficient ring, let $\bfD_{W_\aff}$ denote the small torus FADA, and let $\bfD_{Q^\vee}$ denote the formal Peterson subalgebra as constructed in \cite{Z23}. We then obtain the following important property of the centre of FADA:

\begin{thm}[cf. Theorem~\ref{thm:main1}]
If $\mathbb{Q}\subseteq R$, then the centre $Z(\bfD_{W_\aff})$ of the small torus FADA generates the formal Peterson subalgebra $\bfD_{Q^\vee}$ as an $S$-module. Moreover, the centre $Z(\bfD_{W_\aff})$ generates $\bfD_{W_\aff}$ as a $\bfD_W$-module, where $\bfD_W$ stands for the FADA associated to the finite part of the Kac-Moody root system. \end{thm}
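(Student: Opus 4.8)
The plan is to establish the first (and principal) assertion, $S\cdot Z(\bfD_{W_\aff}) = \bfD_{Q^\vee}$, and then to deduce the second from it. The inclusion $Z(\bfD_{W_\aff})\subseteq\bfD_{Q^\vee}$ is automatic, since the centre commutes in particular with $S\subseteq\bfD_{W_\aff}$; the content lies in the reverse $S$-module generation. First I would pass to the localization $S_\loc$ of $S$ at all the formal roots, over which $\bfD_{W_\aff}$ becomes the transparent twisted group algebra $Q_{W_\aff}\cong S_\loc\rtimes W_\aff$. The decisive small-torus feature is that the translation lattice $Q^\vee\subseteq W_\aff$ acts trivially on $S$; hence the centraliser of $S_\loc$ in $Q_{W_\aff}$ is precisely the part supported on $Q^\vee$, giving $\bfD_{Q^\vee}\otimes_S S_\loc\cong S_\loc[Q^\vee]$, while the centre $Z(Q_{W_\aff})$ is its subring of $W$-invariants $(S_\loc[Q^\vee])^W$, where $W$ acts diagonally on $S_\loc$ and on the lattice.

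Over $S_\loc$ the first generation statement reduces to Galois descent. Because $\mathbb{Q}\subseteq R$ and all roots are inverted, $S_\loc$ is free of rank $|W|$ over $S_\loc^W$ with $S_\loc\otimes_{S_\loc^W}S_\loc\cong\prod_{w\in W}S_\loc$; equivalently the averaging operator $\Pi=\tfrac1{|W|}\sum_{w\in W}w$ is a well-defined $S_\loc^W$-linear projector, and by Galois descent every $W$-equivariant $S_\loc$-module $M$ satisfies $M\cong S_\loc\otimes_{S_\loc^W}M^W$. Applying this to $M=\bfD_{Q^\vee}\otimes_S S_\loc\cong S_\loc[Q^\vee]$, whose module of $W$-invariants is $Z(Q_{W_\aff})$, shows that $Z(Q_{W_\aff})$ generates $\bfD_{Q^\vee}\otimes_S S_\loc$ over $S_\loc$.

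It remains to descend from $S_\loc$ to $S$. Here I would invoke the integral structure of $\bfD_{Q^\vee}$ from \cite{Z23}: it is a free $S$-module on a basis $\{p_\lambda\}_\lambda$ of point/Schubert classes sitting inside $S_\loc[Q^\vee]$, on which the $W$-action on $S_\loc[Q^\vee]$ restricts and acts triangularly with respect to the closure order. Since $\mathbb{Q}\subseteq R$, the projector $\Pi$ then carries $\bfD_{Q^\vee}$ into $(\bfD_{Q^\vee})^W=Z(\bfD_{W_\aff})$, and I would show that the symmetrized classes $z_\lambda:=\Pi(p_\lambda)$ span $\bfD_{Q^\vee}$ over $S$ --- concretely, that a suitable subfamily of the $z_\lambda$ is related to the basis $\{p_\lambda\}$ by a transition matrix that is unitriangular over $S$. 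This yields $S\cdot Z(\bfD_{W_\aff})=\bfD_{Q^\vee}$. For the second assertion, one combines this with the structural identity $\bfD_W\cdot\bfD_{Q^\vee}=\bfD_{W_\aff}$ --- which follows from the decomposition $W_\aff=W\ltimes Q^\vee$ together with the $S$-basis of $\bfD_{W_\aff}$ indexed by $W_\aff$ (cf.\ \cite{Z23}), or by the same localization argument --- via
\[
\bfD_W\cdot Z(\bfD_{W_\aff})\;\supseteq\;\bfD_W\cdot\bigl(S\cdot Z(\bfD_{W_\aff})\bigr)\;=\;\bfD_W\cdot\bfD_{Q^\vee}\;=\;\bfD_{W_\aff},
\]
using $S\subseteq\bfD_W$.

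The main obstacle is this last descent. Passing to $S_\loc$ turns everything into an explicit twisted group algebra and reduces the claims to Galois descent, but the averaging operator $\Pi$ a priori produces elements only in $\bfD_{W_\aff}\otimes_S S_\loc$; one must control denominators and verify that the symmetrized point classes $z_\lambda$ actually lie in the integral formal Peterson subalgebra and still span it over $S$. This is precisely where both hypotheses enter: $\mathbb{Q}\subseteq R$ to have the symmetrizer $\Pi$ at all, and the explicit free-basis description of $\bfD_{Q^\vee}$ from \cite{Z23} to pin down the behaviour of $\Pi$ on that basis. One should also check the identification $Z(\bfD_{W_\aff})=(\bfD_{Q^\vee})^W$ integrally: the inclusion $\supseteq$ amounts to verifying that a $W$-invariant element of $\bfD_{Q^\vee}$ commutes with the affine Demazure generator $A_0$, which works because on the small torus the affine simple root $\alpha_0$ and reflection $s_0$ act as $-\theta$ and $s_\theta$.
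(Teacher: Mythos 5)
Your localized analysis is fine --- over $\cQ$ everything is a twisted group algebra, and your identifications $C_{\cQ_{W_\aff}}(\cQ)=\cQ_{Q^\vee}$ and $Z(\cQ_{W_\aff})=(\cQ_{Q^\vee})^W$ are exactly Lemma~\ref{lem:diainv} --- but the whole content of the theorem is the integral statement, and that is precisely the step you leave as an announcement. Your plan is to average an $S$-basis $\{p_\lambda\}$ of $\bfD_{Q^\vee}$ by $\Pi=\tfrac1{|W|}\sum_{w\in W}w$ and to claim that the averaged classes span over $S$ via a ``unitriangular transition matrix over $S$''. Two concrete problems. First, $\Pi$ is only $S^W$-linear, so $S\cdot\{\Pi(p_\lambda)\}$ is a priori smaller than $S\cdot\Pi(\bfD_{Q^\vee})=S\cdot Z(\bfD_{W_\aff})$; by restricting to averages of basis elements you have committed to a strictly stronger statement than the one to be proved, with no argument offered. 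Second, the asserted triangularity is neither proved nor literally true: already in type $\hat A_1$ one computes $\Pi(\frakY_{\sigma_{2i+1}})=\tfrac12\bigl(1-\tfrac{x_1}{x_{-1}}\bigr)\frakY_{\sigma_{2i+1}}+\tfrac{x_1}{2}\,\frakY_{\sigma_{2i+2}}$, so the ``diagonal'' entries are at best units (not $1$), the corrections move to longer elements, and in higher rank, where the orbits $W\lambda$ have nontrivial stabilizers $W_\lambda$, no mechanism is given for controlling either the triangularity or the unit-ness of the diagonal. You yourself label this ``the main obstacle''; it is exactly the theorem, so the proposal has a genuine gap at its core.

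For comparison, the paper resolves the integrality issue by a finite-type Frobenius/projection argument rather than basis combinatorics: since $\bfx f_e\in\bfD_W^*$ and the Borel map $\rho\colon S\otimes_{S^W}S\to\bfD_W^*$ is an isomorphism, Lemmas~\ref{lem:absorbD} and~\ref{lem:SpiS=DW} produce $a_i,b_i\in S$ with $\sum_i a_iYb_i=1$, where $Y=\sigma\tfrac1\bfx\in\bfD_W$; hence every $z\in\bfD_{W_\aff}$ lies in the left $S$-span of $\sigma\bfD_{W_\aff}$, and applying the left $S$-linear map $\psi$ together with $\psi(\sigma\bfD_{W_\aff})=Z(\bfD_{W_\aff})$ (the paper's integral version of your $\Pi$-statement, which is where $\bbQ\subseteq R$ enters) gives $\bfD_{Q^\vee}=S\cdot Z(\bfD_{W_\aff})$ with all denominators absorbed by $Y$. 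Your deduction of the second assertion also hides an integrality gap: $\bfD_W\cdot\bfD_{Q^\vee}=\bfD_{W_\aff}$ does not ``follow from the decomposition $W_\aff=W\ltimes Q^\vee$ together with an $S$-basis indexed by $W_\aff$'', since the basis elements $X_{I_w}$ do not factor integrally through that decomposition; the paper proves it (Proposition~\ref{prop:DQ=DZ} and Corollary~\ref{thm:DQDWDWa}) using the identity $X_0=X_\theta+\eta_{s_\theta}\tfrac{x_\theta}{x_{-\theta}}Z_\theta$ together with the nontrivial input $Z_\theta\in\bfD_{Q^\vee}$ from \cite[Lemma 4.1]{Z23}. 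So the skeleton of your reduction (prove $S\cdot Z(\bfD_{W_\aff})=\bfD_{Q^\vee}$, then bootstrap to $\bfD_W\cdot Z(\bfD_{W_\aff})=\bfD_{W_\aff}$) matches the paper, but both integral inputs that make it work are missing from your argument.
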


Our next result can be viewed as an extension of the Peterson conjecture.
We first introduce certain localization $\bfD_{Q^\vee, \loc}$ of the formal Peterson subalgebra $\bfD_{Q^\vee}$ with respect to an affine root system of type $\hat A_1$.  We then show

\begin{thm}[cf. Theorem~\ref{thm:main2}]
$\bfD_{Q^\vee, \loc}\cong S[\frakt,{\frakt}^{-1}][\fraks]/(\fraks^2=x_{-1}\fraks\frakt+\mu \frakt)$,
where $x_{-1}$ is a certain characteristic class in $h$ and 
$\mu$ is an element depending on $x_{-1}$.
\end{thm}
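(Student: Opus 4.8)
The plan is to reduce everything to an explicit computation inside the small-torus formal affine Demazure algebra $\bfD_{W_\aff}$ of type $\hat A_1$, which in this rank-one affine situation is small enough to control by hand. Recall that $W_\aff=\langle s_0,s_1\rangle$ is the infinite dihedral group and that $\bfD_{W_\aff}$ is generated as an $S$-algebra by the two Demazure elements $X_0,X_1$, subject to the quadratic relations $X_i^2=\ka_{\al_i}X_i$ (with $\ka_{\al_i}=\frac1{x_{\al_i}}+\frac1{x_{-\al_i}}\in S$) and the twisted Leibniz rule $X_ip=s_i(p)X_i+\Dem_i(p)$ for $p\in S$; there is no braid relation, since it is void for $\hat A_1$. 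By \cite{Z23} the formal Peterson subalgebra is $\bfD_{Q^\vee}=Z_{\bfD_{W_\aff}}(S)$; it contains $S$, and its module structure (again from \cite{Z23}) is governed by the decomposition $W_\aff=Q^\vee\rtimes W_f$ with $W_f=\{1,s_1\}$ and $Q^\vee=\Z\al_1^\vee$.

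The first step is to single out the two generators. Let $\frakt\in\bfD_{Q^\vee,\loc}$ be the (suitably normalized) class attached to the translation $t_{\al_1^\vee}$ — the ``quantum parameter'' — written out explicitly in terms of $X_0,X_1$ and $S$; by construction of the localization $\frakt$ is invertible, and its powers $\frakt^{\pm n}$ realize the whole coweight lattice $Q^\vee$. Let $\fraks\in\bfD_{Q^\vee,\loc}$ be the class attached to the unique length-one minimal coset representative $s_0$. Using the semidirect product structure above, the results of \cite{Z23} give that $\{\,\frakt^n,\ \fraks\frakt^n : n\in\Z\,\}$ is an $S$-basis of $\bfD_{Q^\vee,\loc}$, i.e.\ $\bfD_{Q^\vee,\loc}$ is free of rank two over $S[\frakt,\frakt^{-1}]$ on $\{1,\fraks\}$. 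This is also the point at which the localization is essential: before inverting $\frakt$ one only reaches the ``dominant'' part $S[\frakt][\fraks]$, and the negative translations — hence all of $Q^\vee$ — become available only after localizing.

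The heart of the proof is then the single identity $\fraks^2=x_{-1}\fraks\frakt+\mu\frakt$. To get it, one expresses $\fraks$ and $\frakt$ through $X_0,X_1$ and elements of $S$, forms the product $\fraks^2$ in $\bfD_{W_\aff}$, moves every $X_i$ to the right using $X_ip=s_i(p)X_i+\Dem_i(p)$, and collapses squares with $X_i^2=\ka_{\al_i}X_i$; since the root system has rank one, the only formal-group-law datum that enters is the single series $x_{-\al_1}$. A direct manipulation then puts the result in the asserted form, with $x_{-1}:=x_F(-\al_1)$ the characteristic class of the negative simple root and $\mu\in S$ the explicit expression in $x_{-1}$ and $\ka_{\al_1}$ produced by the calculation. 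As a sanity check, in the additive/ordinary case $x_{-1}$ specializes to $-\al_1$ and $\mu$ to the constant $2$, so that the relation reproduces the multiplication $\xi_1^2=2\xi_2$ of Schubert classes in $h_*(\Gr_{\SL_2})$, i.e.\ the localized quantum cohomology of $\PP^1$ — which is the precise sense in which this extends the Peterson conjecture in type $A_1$. I expect this step — in particular pinning down the normalizations of $\frakt$ and $\fraks$ that make the relation come out in exactly this shape, and identifying $\mu$ — to be the main obstacle, as it is the only place where the formal group law must be handled with any care.

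It remains to assemble the isomorphism. By the previous paragraph, $\fraks\mapsto\fraks$ and $\frakt\mapsto\frakt$ defines a surjective $S$-algebra homomorphism $\Phi\colon S[\frakt,\frakt^{-1}][\fraks]/(\fraks^2-x_{-1}\fraks\frakt-\mu\frakt)\to\bfD_{Q^\vee,\loc}$: it is well defined because the target satisfies that relation, and surjective because $1$ and $\fraks$ generate $\bfD_{Q^\vee,\loc}$ over $S[\frakt,\frakt^{-1}]$. The source is free of rank two over $S[\frakt,\frakt^{-1}]$ on $\{1,\fraks\}$ by its defining relation, and so is the target by the basis from \cite{Z23}; since $\Phi$ carries one such basis onto the other, it is an isomorphism.
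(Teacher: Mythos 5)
Your skeleton is the same as the paper's: take $\fraks$ to be the class of $\sigma_1=s_0$ and $\frakt$ the class of the degree-two translation element, show that after inverting $\frakt$ the Peterson algebra is free of rank two over $S[\frakt,\frakt^{-1}]$ on $\{1,\fraks\}$, establish a single quadratic relation, and match bases (the final assembly step — monic quadratic relation gives rank-two freeness of the source, basis goes to basis — is fine). But two steps you treat as citable are exactly where the content lies. The first is the basis claim: what \cite{Z23} provides is an $S$-basis of $\bfD_{Q^\vee}$ indexed by the minimal coset representatives $W_\aff^-=\{\sigma_i\mid i\ge 0\}$, e.g.\ the classes $\frakY_{\sigma_i}=\pr(Y_{\sigma_i})$; it does not by itself say that these basis elements are the monomials $\frakt^i$ and $\fraks\frakt^i$ in your two chosen generators, nor that $\{\frakY_{\sigma_{2i}}\}$ is a multiplicative set, which is what makes ``inverting $\frakt$'' agree with the localization $\bfD_{Q^\vee,\loc}$. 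That multiplicativity, $\frakY_{w\sigma_{2i}}=\frakY_w\frakY_{\sigma_{2i}}$ and hence $\frakY_{\sigma_{2i}}=\frakY_{10}^i$, is Lemma~\ref{lem:mult} of the paper and is proved there via the projection formula of Lemma~\ref{lem:pr}(ii) together with $Y_{\sigma_{2i}}=(1+\eta_1)\tfrac{1}{x_{-\al}}Y_{\sigma_{2i-1}}$; invoking the semidirect product structure alone does not give it, so you need to supply this argument (or verify it from explicit formulas in $\cQ_{Q^\vee}$).

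Second, the quadratic relation is only promised, and it is the theorem. The paper obtains it by writing $\frakY_0$, $\frakY_{10}$, $\frakY_{010}$ explicitly in the commutative algebra $\cQ_{Q^\vee}$ (Example~\ref{ex:comp}), after which $\frakY_0^2=x_{-1}\frakY_{010}+\mu\frakY_{10}$ with $\mu=-\tfrac{x_{-1}}{x_1}$ is a short check; your plan of commuting the $X_i$ past coefficients inside $\bfD_{W_\aff}$ would also work but is not executed. Moreover, your sanity check points the wrong way: for the choice of generators that makes the basis claim true ($\fraks=\pr(Y_0)$, $\frakt=\pr(Y_{10})$, as in the paper), the computation forces $\mu=-x_{-1}/x_1$, which specializes additively to $1$, not $2$, so the non-equivariant relation over $\QQ$ reads $\fraks^2=\frakt$ rather than the divided-power identity $\xi_1^2=2\xi_2$ you predict. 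Since pinning down the normalization and identifying $\mu$ is precisely what you defer as ``the main obstacle,'' the proposal as written leaves the substance of Theorem~\ref{thm:main2} unproved.
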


Observe that for cohomology and $K$-theory, this localization computes quantum cohomology and quantum $K$-theory of $\bbP^1$ respectively. Hence, $\bfD_{Q^\vee, \loc}$ can be viewed as the `quantum' oriented cohomology of the projective line $\bbP^1$.

As for our last result, observe that the $S$-linear dual $\bfD_{Q^\vee}^*$ of the formal Peterson subalgebra is a natural model for the (small torus) equivariant oriented cohomology of the affine Grassmannian \cite{Z23}. We obtain the following `Kac-Moody' version of results of~\cite{LXZ23}: 

\begin{thm}[cf. Theorem~ \ref{thm:main3}]
The $S$-linear dual $\bfD_{Q^\vee}^*$ of the formal Peterson subalgebra is isomorphic to the $0$-th Hochschild homology of the dual $\bfD_{W_\aff}^*$ of the small torus FADA.
\end{thm}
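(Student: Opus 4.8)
The plan is to prove the statement by transporting it across $S$-linear duality into a statement about the algebra $\bfD_{W_\aff}$ itself, and then reading off that statement from the definition of the formal Peterson subalgebra as a centraliser. First I would extend the Hopf-algebroid/structure-algebra formalism of~\cite{LXZ23} (together with the Kac--Moody localisation set-up of~\cite{CZZ4}) to the affine root system: the structure algebra $\bfD^*_{W_\aff}$ carries source and target maps $S\rightrightarrows\bfD^*_{W_\aff}$, a comultiplication and a counit, and its Hochschild complex has, in the bottom degree, a boundary $b\colon\bfD^*_{W_\aff}\otimes_S\bfD^*_{W_\aff}\to\bfD^*_{W_\aff}$ whose cokernel is $\HH_0(\bfD^*_{W_\aff})$. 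Because $\bfD_{W_\aff}=\bigoplus_{w\in W_\aff}S\cdot X_w$ is $S$-free on the affine Schubert basis, the canonical pairing $\bfD_{W_\aff}\otimes_S\bfD^*_{W_\aff}\to S$ is perfect on each finite truncation, so dualising $b$ term by term identifies the $S$-linear dual of $\HH_0(\bfD^*_{W_\aff})$ with the kernel of the transpose $b^\vee\colon\bfD_{W_\aff}\to\bfD_{W_\aff}\otimes_S\bfD_{W_\aff}$, a canonical $S$-submodule of $\bfD_{W_\aff}$. Unwinding the two $S$-actions, this kernel is exactly the set of elements commuting with the copy of $S$, i.e. the centraliser $Z_{\bfD_{W_\aff}}(S)=\bfD_{Q^\vee}$. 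Since $\bfD_{Q^\vee}$ is in turn $S$-free (with the affine-Grassmannian Schubert basis indexed by minimal coset representatives of $W_\aff/W$), re-dualising gives $\HH_0(\bfD^*_{W_\aff})\cong\bfD_{Q^\vee}^*$.

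To guard against the identification $\ker b^\vee=\bfD_{Q^\vee}$ hiding a subtlety (for instance the exactness needed to dualise a subquotient of the infinite-rank object $\bfD^*_{W_\aff}$), I would also verify everything after localisation and then descend. Inverting the classes $x_\alpha$ turns $\bfD_{W_\aff}$ into the twisted group algebra $\bfD_{W_\aff}\otimes_S Q\cong Q_{W_\aff}$ of $W_\aff=W\ltimes Q^\vee$ over $Q=S[x_\alpha^{-1}\mid\alpha]$; here it is crucial that for the small torus the lattice part $Q^\vee$ acts trivially on $S$, so $W_\aff$ acts on $Q$ only through the finite $W$ (and the reflection hyperplanes are deleted from $\Spec Q$). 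Consequently $Z_{Q_{W_\aff}}(Q)$ collapses to the group subalgebra $Q[Q^\vee]$ — the localised Peterson presentation $\bfD_{Q^\vee}\otimes_S Q\cong Q[Q^\vee]$ — and the localised transpose boundary $b^\vee\otimes Q$ is seen, by the same trivial-action phenomenon, to have kernel precisely $Q[Q^\vee]$, so the comparison map is an isomorphism over $Q$. To descend: injectivity is immediate because both $\bfD_{Q^\vee}$ and $\ker b^\vee$ are $S$-torsion free and embed into their localisations; for surjectivity I would compare the two $S$-lattices in the affine Schubert basis and check that the comparison map is triangular for the Bruhat order on $W_\aff/W$ with unit diagonal, using Theorem~\ref{thm:main1} to produce explicit generators of $\bfD_{Q^\vee}$ (coming from $Z(\bfD_{W_\aff})$) that pin down the image.

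The main obstacle is the Kac--Moody bookkeeping rather than any single algebraic identity. Since $\bfD_{W_\aff}$ has infinite rank and $\bfD^*_{W_\aff}$ is a completed (product) object, the Hochschild complex, the $S$-linear duality, the formation of $\HH_0$, and the centraliser must all be organised as filtered colimits over finite parabolic sub-data $\bfD_{W_J}$ (or over a length truncation) and shown to commute with those colimits; in particular one must confirm that the bialgebroid axioms and the perfectness of the pairing genuinely survive passage to the affine limit, so that ``dual of the cokernel = kernel of the transpose'' is legitimate there. A secondary difficulty is integrality in the descent step: matching $\ker b^\vee$ with the \emph{correct} $S$-lattice $\bfD_{Q^\vee}$ rather than a merely commensurable one requires the explicit moment-graph description of $\bfD^*_{W_\aff}$ and, as in Theorem~\ref{thm:main1}, the hypothesis $\Q\subseteq R$ guaranteeing the averaging idempotents for the finite Weyl group.
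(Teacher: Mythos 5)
Your argument only ever controls \emph{duals} of $\HH_0(\bfD_{W_\aff}^*)$, never $\HH_0(\bfD_{W_\aff}^*)$ itself, and this is where it breaks. It is true that $\Hom_S(\coker b,S)=\ker(b^*)$, but two things go wrong. First, $\Hom_S(\bfD_{W_\aff}^*,S)$ is not $\bfD_{W_\aff}$: the dual $\bfD_{W_\aff}^*$ is an infinite product $\prod_{w\in W_\aff}S\cdot Y_{I_w}^*$, and the dual of a product is not the direct sum, so your transpose $b^\vee\colon\bfD_{W_\aff}\to\bfD_{W_\aff}\otimes_S\bfD_{W_\aff}$ does not compute the dual of $\HH_0$; the evaluation pairing only exhibits the centralizer $C_{\bfD_{W_\aff}}(S)=\bfD_{Q^\vee}$ \emph{inside} that dual, and you acknowledge the limit/colimit commutation problem as ``the main obstacle'' without resolving it. Second, and more fundamentally, even granting $\bigl(\HH_0(\bfD_{W_\aff}^*)\bigr)^*\cong\bfD_{Q^\vee}$, the ``re-dualising'' step needs $\HH_0(\bfD_{W_\aff}^*)$ to be reflexive (in particular $S$-torsion free), and that is precisely the content of the theorem: the danger is that the quotient by $\langle a\bullet f-a\odot f\rangle$ is too large, with torsion that no dualization can detect. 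Your localisation-and-descent variant has the same blind spot: torsion-freeness of the \emph{submodules} $\bfD_{Q^\vee}$ and $\ker b^\vee$ is not the issue; what must be proved is injectivity of the natural surjection $\imath^{*\prime}\colon\HH_0(\bfD_{W_\aff}^*)\to\bfD_{Q^\vee}^*$, i.e.\ that the quotient does not collapse or acquire extra torsion classes, and a quotient does not embed into its localisation for free.

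For comparison, the paper's proof establishes exactly this injectivity after the (easy) localized statement, which is your $\cQ$-computation and is their Lemma~\ref{lem:isop}. The mechanism is a filtration $\calZ_i$ of $\bfD_{W_\aff}^*$ by the support condition $\ell(w_\lambda)\geq i$ on minimal coset representatives of $t_\lambda W$, together with GKM-type divisibility conditions \eqref{eq:GKMtranl}--\eqref{eq:GKMfin} proved in Lemma~\ref{lem:newGKM} using the appendix lemmas on $\ell_\alpha(w_\lambda)$ and the Bruhat order; Lemma~\ref{lem:Zfil} then shows each graded piece $\calZ_i/\calZ_{i+1}$ is free of finite rank $|F_i\setminus F_{i+1}|$, matching the corresponding filtration $\calX_i$ of $\bfD_{Q^\vee}^*$, so $\imath^{*\prime}$ is an isomorphism on associated graded pieces. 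This finite-rank reduction is the ingredient missing from your plan (and Theorem~\ref{thm:main1} is not needed for it). To salvage your approach you would have to prove directly that $\HH_0(\bfD_{W_\aff}^*)$ is $S$-torsion free and reflexive, which is essentially equivalent to carrying out such a filtration analysis.
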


Here the dual $\bfD_{W_\aff}^*$ can be interpreted as a model for the $T$-equivariant oriented cohomology of the respective affine flag variety. Therefore, it has two commuting actions by the equivariant coefficient ring $S$. Following the ideas of \cite{LXZ23} one defines its 0-th Hochschild homology as the quotient obtained by merging these two $S$-module structures. 

To prove this result we introduce a special filtration on the dual $\bfD_{Q^\vee}^*$ (to reduce it to finite cases). This approach seems to be new even for cohomology and the $K$-theory.

\medskip

The paper is organized as follows: Section~\ref{sec1} revisits the definition of the formal Peterson subalgebra $\bfD_{Q^\vee}$ from \cite{Z23}. In Section 2 we establish some basic properties of $\bfD_{Q^\vee}$ and study the action of $\bfD_{W_\aff}$ on it. Section~3 is dedicated to the study of $\bfD_{Q^\vee}$ and $\bfD_{W_\aff}$, culminating in the proof of Theorem~\ref{thm:main1}. Section~4 focuses on the example of type $\hat A^1$ and on the proof of Theorem~\ref{thm:main2}. Finally, in Section~5 we investigate the dual of the formal Peterson subalgebra, and we prove Theorem~\ref{thm:main3}.

\subsection*{Acknowledgements} Part of this research was conducted when C.~Z.~ was visiting the University of Ottawa, and the Max Planck Institute of Mathematics.  He  would like to thank these institutes for their hospitality and support. R.~X. and K.~Z. were partially supported by the NSERC Discovery grant RGPIN-2022-03060, Canada.


\section{The formal Peterson subalgebra}\label{sec1}

In this section we recall the definition of a small torus formal affine Demazure algebra (FADA) and the formal Peterson algebra
following \cite{CZZ4,Z23}.

Given an oriented algebraic cohomology theory $h(\text{-})$ in the sense of Levine-Morel (see  \cite{LM07}) there is an associated formal group law $F$ over a commutative ring $R$ with characteristic 0. Here $R=h(pt)$ is the coefficient ring, and $F$ is defined from the Quillen formula for the characteristic class of a tensor product of line bundles. For example, for connective $K$-theory (see e.g. \cite{RZ23}) we have $F_\be(x,y)=x+y-\be xy$ over the polynomial ring $R=\bbZ[\be]$. Specializing to $\be=1$ (resp. $\be=0$) one obtains the usual $K$-theory (resp. cohomology). In these notes, by usual cohomology we always mean its algebraic part: the Chow ring (modulo rational equivalence) with rational coefficients.

Given a lattice $\Lambda$ (free abelian group of finite rank) and a formal group law $F$, consider the associated formal group algebra $S$ of \cite{CPZ13} that is the quotient of the power series ring \[S=R[[\Lambda]]_F=R[[x_\la \mid \la\in \Lambda]]/\calJ_F,\] where $\calJ_F$ is the closure of the ideal of relations ($x_0$, $x_{\la_1+\la_2}-F(x_{\la_1}, x_{\la_2})$) for all $\la_1,\la_2\in \Lambda$. In the case $F=F_\be$ we will take $S$ to be the quotient $R[\Lambda]_F=R[x_\la\mid \la\in \Lambda]/\langle x_0,x_{\la_1+\la_2}-F_\be(x_{\la_1}, x_{\la_2})\rangle$ of the polynomial ring.

Consider a finite irreducible root system. Let $\Phi$ denote the set of roots, and $I=\{\al_1,\ldots,\al_n\}$ denote the set of simple roots. Let $Q$ be the root lattice, and $Q^\vee$ be the coroot lattice.  Let $W$ denote the Weyl group, generated by simple reflections $s_{\al_i}$, $\al_i\in I$. Let $\theta\in \Phi$ be the highest root. Consider an affine root system corresponding to the extended Dynkin diagram for $\Phi$. Let $\al_0=-\theta+\delta\in Q\oplus \bbZ\delta$ denote the  extra simple root (here $\delta$ is the so called null root) so that 
$s_{\al_0}= t_{\theta^\vee} s_\theta$ is an extra generator of the respective affine Weyl group $W_\aff= Q^\vee \rtimes W$.  
Recall that the latter is generated by reflections $s_{\al+k\delta}= t_{-k\al^\vee} s_\al$, where $s_\al\in W$ is a reflection and $t_\la$ is a translation. The affine Weyl group $W_\aff$ acts on $Q$ via $W$ that is $t_\la w(\mu)=w(\mu)$, $\mu\in Q$, $w\in W$, $\la\in Q^\vee$.
Therefore, it also acts on the formal group algebra $S=R[[Q]]_F$.

Suppose $x_\alpha$ is regular for each $\alpha$ (see \cite[Lemma~2.2 and Definition 4.4]{CZZ1}). In particular, this holds if $2$ is not a zero-divisor in $R$.
 Consider the localization $\cQ=S[\tfrac{1}{x_\al}\mid \al\in \Phi]$ of $S$.  Define the twisted group algebra $\cQ_{W_\aff}=\cQ\otimes_RR[W_\aff]$. It is a free left $\cQ$-module with basis $\eta_u$, $u\in W_\aff$. It is also an $R$-algebra with the product given by  \[c\eta_u\cdot c'\eta_{u'}=cu(c')\eta_{uu'}, ~c,c'\in \cQ, ~u,u'\in W_\aff.\]

For each $\al\in \Phi$, define $\ka_\al=\frac{1}{x_\al}+\frac{1}{x_{-\al}}$ (which is an element of $S$), $X_\al=\frac{1}{x_\al}(1-\eta_{s_\al})$ (the Demazure element) and $Y_\al=\ka_\al-X_\al$ (the push-pull element). Also define $X_{\al_0}=\frac{1}{x_{-\theta}}(1-\eta_{s_0})$ and $Y_{\al_0}=\ka_\theta-X_{\al_0}$. All these elements satisfy the quadratic relations (e.g. $X_\al^2=\ka_\al X_\al$) and the twisted braid relations (see e.g. \cite{CZZ4}). We denote by $X_{I_u}$, $Y_{I_u}$ products corresponding to a reduced sequence $I_u$ of $u\in W_\aff$. For simplicity of notations, we will omit $\alpha$ or $s$ in the indices, i.e., we will write
$x_i=x_{\al_i}$, $s_i=s_{\al_i}$, $\eta_i=\eta_{s_{i}}$, $X_i=X_{\al_i}$, and  $Y_i=Y_{\al_i}$.

Similarly, define the twisted group algebra $\cQ_{Q^\vee}=\cQ\otimes_RR[Q^\vee]$. It is a free $\cQ$-module with basis $\eta_{t_\la}$, $\la\in Q^\vee$.  Observe that $\cQ_{Q^\vee}$ is commutative since $t_\la(c)=c$, $c\in \cQ$. Consider two homomorphisms of left $\cQ$-modules 
\begin{align*}
\pr\colon&\cQ_{W_\aff}\to \cQ_{Q^\vee}, ~c\eta_{t_\la w}\mapsto c\eta_{t_\la}, ~c\in \cQ, w\in W,\\
\imath\colon&\cQ_{Q^\vee}\to \cQ_{W_\aff}, ~c\eta_{t_\la}\mapsto c\eta_{t_\la}.
\end{align*}
By definition, $\imath$ is a section of $\pr$, and it is a ring homomorphism. Set $\psi=\imath\circ \pr$, so $\psi|_{\imath(\cQ_{Q^\vee})}=\id$. 
Define elements $Z_\al=\frac{1}{x_{-\al}}(1-\eta_{t_{\al^\vee}})$, $\al\in \Phi$. 

\begin{lemma} 
We have  $\psi(zX_i)=0$  for any $z\in \cQ_{W_\aff}$, $\al_i\in I$, and $\psi(X_0)=Z_\theta$. 
\end{lemma}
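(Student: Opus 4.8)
The plan is to verify both identities by direct computation in the free left $\cQ$-basis $\{\eta_u : u\in W_\aff\}$, using that $\psi=\imath\circ\pr$ is a homomorphism of left $\cQ$-modules together with one elementary observation: right multiplication by an element of the finite Weyl group $W$ does not alter the $Q^\vee$-component of an element of $W_\aff=Q^\vee\rtimes W$, and $\pr$ only records that component.

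First I would treat $\psi(zX_i)=0$. Writing $z=\sum_{u\in W_\aff}c_u\eta_u$ with $c_u\in\cQ$ and using $\eta_u\cdot c=u(c)\eta_u$ and $\eta_u\eta_v=\eta_{uv}$, one computes
\[
zX_i \;=\; z\cdot\tfrac{1}{x_i}(1-\eta_{s_i}) \;=\; \sum_{u\in W_\aff} c_u\, u\!\big(\tfrac{1}{x_i}\big)\,\big(\eta_u-\eta_{us_i}\big).
\]
For $u=t_\la w$ with $\la\in Q^\vee$, $w\in W$, we have $us_i=t_\la(ws_i)$ with $ws_i\in W$, so $\pr(\eta_u)=\eta_{t_\la}=\pr(\eta_{us_i})$ and hence $\psi(\eta_u-\eta_{us_i})=0$. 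Applying the left $\cQ$-linear map $\psi$ term by term then kills every summand, which gives $\psi(zX_i)=0$.

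For the second identity I would substitute $s_0=t_{\theta^\vee}s_\theta$ into $X_0=\tfrac{1}{x_{-\theta}}(1-\eta_{s_0})$, so that $X_0=\tfrac{1}{x_{-\theta}}\big(\eta_e-\eta_{t_{\theta^\vee}s_\theta}\big)$. The $Q^\vee$-components of $e$ and $t_{\theta^\vee}s_\theta$ are $0$ and $\theta^\vee$ respectively, so $\pr(\eta_e)=\eta_e$ and $\pr(\eta_{t_{\theta^\vee}s_\theta})=\eta_{t_{\theta^\vee}}$. Applying $\imath$ and using left $\cQ$-linearity yields $\psi(X_0)=\tfrac{1}{x_{-\theta}}\big(1-\eta_{t_{\theta^\vee}}\big)$, which is exactly $Z_\theta$ by definition.

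This argument is essentially bookkeeping and I do not anticipate a genuine obstacle; the only points to watch are (i) that one must expand $z$ on the left, since $\psi$ is left- but not right-$\cQ$-linear, and (ii) keeping track of the twist when commuting $\eta_u$ past the scalar $\tfrac{1}{x_i}$, which is why the coefficient $u(\tfrac{1}{x_i})$ appears; its precise value is irrelevant since it is carried along by $\cQ$-linearity. Note also that the analogue of (i) fails for $\al_0$ (right multiplication by $s_0$ shifts the translation part by $\theta^\vee$), which is why only $\psi(X_0)$, and not $\psi(zX_0)$, is asserted.
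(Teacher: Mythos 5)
Your proof is correct and follows essentially the same route as the paper: expand $z$ in the left $\cQ$-basis $\eta_u$, commute $\tfrac{1}{x_{\al_i}}$ past $\eta_u$, and observe that $\eta_u$ and $\eta_{us_i}$ share the same translation component so $\pr$ kills their difference, while for $X_0$ one substitutes $s_0=t_{\theta^\vee}s_\theta$ and reads off $Z_\theta$. The paper merely phrases the first computation for a single term $z=c\eta_u$ (sufficient by linearity), so there is no substantive difference.
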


\begin{proof}
For $z=c\eta_u$, $c\in \cQ$, $u\in W_\aff$, and $\al_i\in I$ we have
\[
\pr (c\eta_u X_i)=\pr\big(c\eta_u\tfrac{1}{x_{\al_i}}(1-\eta_{s_i})\big)=\pr\big(\tfrac{c}{u(x_{\al_i})} (\eta_u-\eta_{us_i})\big)=0.
\]
As for the $X_0$, we have
\[
\psi(X_0)=\imath\circ \pr(\tfrac{1}{x_{-\theta}}(1-\eta_{s_0}))=\tfrac{1}{x_{-\theta}}\imath\circ \pr((1-\eta_{t_{\theta^\vee}s_\theta}))=\tfrac{1}{x_{-\theta}}(1-\eta_{t_{\theta^\vee}})=Z_{\theta}. \qedhere
\]
\end{proof}

Denote $I_\aff=\{\al_0,\ldots,\al_n\}$. Following~\cite{CZZ4} define the small torus FADA $\bfD_{W_\aff}$ to be the subring of $\cQ_{W_\aff}$ generated by $S$ and the elements $X_i$, $\al_i\in I_\aff$. Set $\bfD_{W_\aff/W}=\pr(\bfD_{W_\aff})$ to be the image in $\cQ_{Q^\vee}$. We called it the relative FADA. Assuming $\mathbb{Q}\subset R$ and using the small torus GKM  descripition it is proven in \cite[Lemma 5.1]{Z23} that the map $\imath$ induces a map $\bfD_{W_\aff/W}\to \bfD_{W_\aff}$. We then introduce a formal Peterson subalgebra to be the image $\bfD_{Q^\vee}=\imath(\bfD_{W_\aff/W})$ of the relative FADA.
One of the results in \cite[Theorem 5.7]{Z23} says that 

\begin{prop}\label{prop:hopf}  Suppose $R$ has characteristic 0, then the formal Peterson subalgebra $\bfD_{Q^\vee}$ is a Hopf subalgebra in $\cQ_{Q^\vee}$. Moreover,  $\bfD_{Q^\vee}$ coincides with the centralizer $C_{\bfD_{W_\aff}}(S)$ of the formal group algebra $S$ in the FADA $\bfD_{W_\aff}$. 
\end{prop}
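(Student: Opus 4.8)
The plan is to realise all objects inside the twisted group algebra $\cQ_{W_\aff}$ and to exploit that $\psi=\imath\circ\pr$ is idempotent. Recall that $\cQ_{Q^\vee}=\cQ\otimes_RR[Q^\vee]$ is a commutative Hopf algebra over $\cQ$: the comultiplication $\Delta\colon\cQ_{Q^\vee}\to\cQ_{Q^\vee}\otimes_\cQ\cQ_{Q^\vee}$ is determined by $\eta_{t_\la}\mapsto\eta_{t_\la}\otimes\eta_{t_\la}$, the counit $\epsilon\colon\cQ_{Q^\vee}\to\cQ$ by $\eta_{t_\la}\mapsto 1$, and the antipode by $\eta_{t_\la}\mapsto\eta_{t_{-\la}}$; note $S\subseteq\cQ\subseteq\cQ_{Q^\vee}$. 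The technically crucial first step is the identification
\[
\bfD_{Q^\vee}=\bfD_{W_\aff}\cap\cQ_{Q^\vee},
\]
$\cQ_{Q^\vee}$ being viewed inside $\cQ_{W_\aff}$ via the injective ring map $\imath$. This is formal: $\pr\circ\imath=\id$ gives $\psi^2=\psi$ and $\psi|_{\imath(\cQ_{Q^\vee})}=\id$, while \cite[Lemma~5.1]{Z23} gives $\psi(\bfD_{W_\aff})\subseteq\bfD_{W_\aff}$; hence $\bfD_{Q^\vee}=\psi(\bfD_{W_\aff})=\{z\in\bfD_{W_\aff}:\psi(z)=z\}=\bfD_{W_\aff}\cap\imath(\cQ_{Q^\vee})$. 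In particular $\bfD_{Q^\vee}$ is an intersection of two subrings of $\cQ_{W_\aff}$, hence a subring, so it is an $S$-subalgebra of $\cQ_{Q^\vee}$.

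For the centralizer statement I would first establish $C_{\cQ_{W_\aff}}(S)=\cQ_{Q^\vee}$. The inclusion $\supseteq$ holds since $Q^\vee$ acts trivially on $S$ and $\cQ$ is commutative. For $\subseteq$, write $z=\sum_{u\in W_\aff}c_u\eta_u$ with $c_u\in\cQ$, so $zs-sz=\sum_u c_u\bigl(u(s)-s\bigr)\eta_u$; thus $z$ centralises $S$ iff $c_u(u(s)-s)=0$ in $\cQ$ for all $s\in S$, and if $u=t_\la\bar u$ with $\bar u\in W\setminus\{e\}$ then non-degeneracy of the $W$-action on $S$ (this is where the characteristic hypothesis and regularity of the $x_\al$ enter) furnishes an $s$ with $\bar u(s)-s$ a non-zero-divisor, forcing $c_u=0$. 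Combining with the identification above and with $S\subseteq\cQ_{Q^\vee}$ commutative,
\[
C_{\bfD_{W_\aff}}(S)=\bfD_{W_\aff}\cap C_{\cQ_{W_\aff}}(S)=\bfD_{W_\aff}\cap\cQ_{Q^\vee}=\bfD_{Q^\vee}.
\]

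It remains to check that $\bfD_{Q^\vee}$ is closed under $\epsilon$, $\Delta$ and the antipode. For $\epsilon$ this is immediate: $\bfD_{W_\aff}$ preserves $S$ under the natural $\cQ_{W_\aff}$-action on $\cQ$, and on $\cQ_{Q^\vee}$ this action is $z\cdot f=\epsilon(z)f$ since translations act trivially, so $\epsilon(z)=z\cdot1\in S$ for $z\in\bfD_{Q^\vee}\subseteq\bfD_{W_\aff}$. The comultiplication is the heart of the matter and, I expect, the main obstacle: one must prove $\Delta(\bfD_{Q^\vee})\subseteq\bfD_{Q^\vee}\otimes_S\bfD_{Q^\vee}$, where the natural map $\bfD_{Q^\vee}\otimes_S\bfD_{Q^\vee}\to\cQ_{Q^\vee}\otimes_\cQ\cQ_{Q^\vee}$ is injective as the modules involved are flat over $S$. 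The intersection trick fails here, since $\eta_u\mapsto\eta_u\otimes\eta_u$ does not define a coalgebra structure on $\cQ_{W_\aff}$ (it is not right $\cQ$-linear once $u$ acts non-trivially). I would instead work from an explicit description of $\bfD_{Q^\vee}$: it is free over $S$ with basis $\{\imath\pr(X_{I_{v_\la}}):\la\in Q^\vee\}$, $v_\la$ the minimal-length representative of $t_\la W$, the simplest being $Z_\theta=\psi(X_0)$ together with the group-like $\eta_{t_{\theta^\vee}}=1-x_{-\theta}Z_\theta\in\bfD_{Q^\vee}$; on these $\Delta$ is computed directly, for instance
\[
\Delta(Z_\al)=Z_\al\otimes 1+\eta_{t_{\al^\vee}}\otimes Z_\al\in\bfD_{Q^\vee}\otimes_S\bfD_{Q^\vee}
\]
whenever $Z_\al\in\bfD_{Q^\vee}$. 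To handle the general basis element — and, via the symmetry $\la\mapsto-\la$, the antipode at the same time — the cleanest route is the small-torus GKM presentation of $\bfD_{Q^\vee}$ from \cite[Lemma~5.1]{Z23}, which realises $\bfD_{Q^\vee}$ as the $z=\sum_\la c_\la\eta_{t_\la}$ satisfying explicit congruences modulo the $x_\al$ indexed by the affine roots; one then checks these congruences are inherited by $\Delta(z)$ and by the $\la\mapsto-\la$ image of $z$. The delicate point is to produce the tensor decomposition of the ``diagonal'' element $\Delta(z)$ with both factors satisfying the GKM congruences, not merely the weaker conditions valid after inverting the $x_\al$, so that $\Delta(z)$ really lands in $\bfD_{Q^\vee}\otimes_S\bfD_{Q^\vee}$.
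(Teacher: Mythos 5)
You should note first that the paper does not prove this proposition at all: it is quoted verbatim from \cite[Theorem~5.7]{Z23}, so your proposal is in effect an attempt to reprove Zhong's theorem rather than to reproduce an argument in this paper. The parts you actually carry out are sound. The identification $\bfD_{Q^\vee}=\psi(\bfD_{W_\aff})=\bfD_{W_\aff}\cap\imath(\cQ_{Q^\vee})$ is correct, granting \cite[Lemma~5.1]{Z23} (which the paper itself invokes when defining $\bfD_{Q^\vee}$); the computation $C_{\cQ_{W_\aff}}(S)=\cQ_{Q^\vee}$ works, since for $\bar u\neq e$ in $W$ one can take $s=x_\la$ with $\bar u(\la)\neq\la$ and use $x_{\bar u(\la)}-x_\la=x_{\bar u(\la)-\la}\cdot(\mathrm{unit})$ together with regularity of $x_\mu$ for $0\neq\mu\in Q$ when $\mathbb{Q}\subseteq R$; and the counit argument via the action on $S$ is fine. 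So the centralizer half of the proposition is essentially proved by your outline.

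The genuine gap is in the Hopf half, which is the actual content of the statement. Closure under the coproduct is never established: beyond the single computation $\triangle(Z_\al)=Z_\al\otimes 1+\eta_{t_{\al^\vee}}\otimes Z_\al$, you assert without proof an $S$-basis $\{\imath\pr(X_{I_{w_\la}})\}$ of $\bfD_{Q^\vee}$ and then only describe a strategy (check that the small-torus GKM congruences of \cite[Lemma~5.1]{Z23} are inherited by $\triangle(z)$), while yourself flagging the decisive step --- exhibiting $\triangle(z)$ as $\sum_i a_i\otimes b_i$ over $S$ with \emph{both} factors in $\bfD_{Q^\vee}$, rather than verifying conditions that only make sense after inverting the $x_\al$ --- and leaving it unresolved. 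Knowing that $\triangle(z)\in\cQ_{Q^\vee}\otimes_\cQ\cQ_{Q^\vee}$ and satisfies integrality conditions ``slotwise'' does not by itself produce such a decomposition, so as written the Hopf-subalgebra claim is not proved; the antipode closure is likewise only asserted. To close the gap you would either have to carry out the GKM bookkeeping on an explicit basis in detail, or argue structurally: if one knows $\triangle(\bfD_{W_\aff})\subseteq\bfD_{W_\aff}\otimes_S\bfD_{W_\aff}$ (the coproduct structure on the FADA in the spirit of \cite{CZZ1,CZZ4}), then applying $\psi\otimes_S\psi$ --- which is defined because $\psi$ is left $S$-linear with image $\bfD_{Q^\vee}$ by \cite[Lemma~5.1]{Z23}, and which fixes $\triangle(z)$ for $z\in\imath(\cQ_{Q^\vee})$ since $\psi(c\eta_{t_\la w})=c\eta_{t_\la}$ --- yields $\triangle(z)\in\bfD_{Q^\vee}\otimes_S\bfD_{Q^\vee}$ directly; otherwise the honest course is to cite \cite[Theorem~5.7]{Z23}, exactly as the paper does.
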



\section{Properties of the FADA and the Peterson subalgebra}

In the present section we establish several properties of the FADA and the Peterson subalgebra.
For  $K$-theory some of these properties were proven in \cite{K18} with different arguments.  
We start with the following version of the projection formula:

\begin{lemma}\label{lem:pr}
For any $z$, $z'\in \cQ_{W_\aff}$ and  $\xi\in \cQ_{Q^\vee}$ we have in $\cQ_{Q^\vee}$:
\begin{itemize}
\item[(i)] $\pr(\imath(\xi)z)=\xi\pr(z)$, and
\item[(ii)]  $\pr(z\sigma z')=\pr(z)\pr(\sigma z')$, where  $\sigma=\sum_{w\in W}\eta_w$. 
\end{itemize}
\end{lemma}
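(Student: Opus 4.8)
The plan is to verify both identities by reducing to the $\cQ$-basis $\{\eta_u : u \in W_\aff\}$ and using the twisted-group-algebra product together with the fact that $W_\aff = Q^\vee \rtimes W$, so every $u \in W_\aff$ factors uniquely as $u = t_\la w$ with $\la \in Q^\vee$, $w \in W$, and $\pr(c\eta_{t_\la w}) = c\eta_{t_\la}$, $\imath(c\eta_{t_\la}) = c\eta_{t_\la}$. For part (i), by $\cQ$-linearity it suffices to take $\xi = c\eta_{t_\mu}$ and $z = c'\eta_{t_\la w}$. Then $\imath(\xi)z = c\,t_\mu(c')\,\eta_{t_\mu t_\la w} = c c'\,\eta_{t_{\mu+\la}w}$ since $t_\mu$ acts trivially on $\cQ$, whence $\pr(\imath(\xi)z) = c c'\,\eta_{t_{\mu+\la}} = (c\eta_{t_\mu})(c'\eta_{t_\la}) = \xi\,\pr(z)$; here I use that $\cQ_{Q^\vee}$ is commutative and that $\pr$ is $\cQ$-linear. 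Note this also re-proves that $\imath$ is a section of $\pr$ as a special case.

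For part (ii), again reduce by $\cQ$-linearity to $z = c\eta_u$ and $z' = c'\eta_{u'}$ with $u = t_\la v$, $u' = t_{\la'} v'$. The key observation is the "averaging" identity: for any $u \in W_\aff$ one has $\eta_u \sigma = \sum_{w \in W} \eta_{uw}$, and since $u = t_\la v$ with $v \in W$, the set $\{uw : w \in W\}$ equals $\{t_\la w : w \in W\}$ as $w$ ranges over $W$. Hence $\pr$ applied to $\eta_u \sigma (\text{anything in } \cQ_{W_\aff})$ only depends on the translation part $t_\la$ of $u$, i.e. on $\pr(\eta_u)$. Concretely, expanding $z\sigma z' = \sum_{w\in W} c\,v(c')\,\eta_{t_\la v w u'}$ and noting $vw$ ranges over all of $W$, one sees the translation part of $t_\la v w u'$ is always $t_{\la + (\text{translation part of } vwu')}$ and the sum over $w$ reorganizes so that $\pr(z\sigma z')$ equals $\pr(z)\cdot \pr(\sigma z')$; the left factor contributes only through $c\eta_{t_\la}$ because the $W$-part of $z$ gets absorbed into the summation over $W$. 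I would write this out carefully by computing $\pr(\sigma z')$ first and then checking $\pr(z \cdot (\sigma z')) = \pr(z)\pr(\sigma z')$ using that multiplication by $c\eta_{t_\la v}$ on the left, followed by $\pr$, sends $\eta_{t_\mu w'} \mapsto c\,\eta_{t_{\la+\mu}}$ after summing the $W$-part against $\sigma$ — this is precisely the identity in (i) combined with the absorption.

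The main obstacle is bookkeeping the interaction between the $W$-part $v$ of $z$ and the averaging element $\sigma$: one must check that left-multiplication by $\eta_v$ ($v \in W$) permutes the summands of $\sigma z'$ and therefore does not change $\pr(\sigma z')$, so that effectively $\pr(z \sigma z') = \pr(c\eta_{t_\la} \cdot \sigma z')$, after which part (i) applies with $\xi = c\eta_{t_\la} = \pr(z)$. I expect no conceptual difficulty beyond this, since all the relevant actions of $W_\aff$ on $\cQ$ factor through $W$ and $\cQ_{Q^\vee}$ is commutative; the argument is a direct, if slightly intricate, computation in the twisted group algebra.
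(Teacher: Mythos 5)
Your proposal is correct. Part (i) is verbatim the paper's argument: reduce to $\xi=c\eta_{t_\mu}$, $z=c'\eta_{t_\la w}$ and use that translations act trivially on $\cQ$. For part (ii) you take a slightly different route: you absorb the $W$-part of $z$ into $\sigma$ via the identity $\eta_v\sigma=\sigma$ (equivalently $\eta_v\,(\sigma z')=\sigma z'$ for $v\in W$), so that $z\sigma z'=\imath(\pr(z))\,\sigma z'$, and then invoke part (i) with $\xi=\pr(z)$; the paper instead expands both sides directly, using $t_\la vw\,t_{\la'}v'=t_\la t_{vw(\la')}vwv'$ and the reindexing $w'=vw$, and compares the resulting sums. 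The two arguments rest on the same mechanism (invariance of the sum over $W$ under left translation by $v$), but your packaging is a bit cleaner in that (ii) becomes a formal consequence of (i) plus the absorption identity, while the paper's version is a self-contained symmetric computation. Two small slips in your intermediate bookkeeping, neither of which affects the final argument: in the expansion of $z\sigma z'$ the coefficient should be $c\,vw(c')$, not $c\,v(c')$; and left multiplication by $c\eta_{t_\la v}$ followed by $\pr$ sends $\eta_{t_\mu w'}$ to $c\,\eta_{t_{\la+v(\mu)}}$, not to $c\,\eta_{t_{\la+\mu}}$ --- it is precisely the $\sigma$-averaging (your absorption step in the last paragraph) that removes this $v$-twist, so make sure the written proof runs through that step rather than the unqualified claim.
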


Observe that in the $K$-theory case the second property played a key role in \cite[Theorem~1.7]{K18}. 

\begin{proof}
(i) Let $\xi=c_1\eta_{t_{\la_1}}$ and  $z=c_2\eta_{t_{\la_2}w}$, where $w\in W$, $c_i\in \cQ$, $\la_i\in Q^\vee$. Then we obtain
\[
\pr(\imath(\xi)z)=\pr(c_1\eta_{t_{\la_1}}c_2\eta_{t_{\la_2}w})=\pr(c_1t_{\la_1}(c_2)\eta_{t_{\la_1+\la_2}w})=c_1t_{\la_1}(c_2)\eta_{t_{\la_1+\la_2}}=c_1\eta_{t_{\la_1}}c_2\eta_{t_{\la_2}}=\xi \pr (z). 
\]

\noindent
(ii) Let $z=c\eta_{t_{\la}v}$ and $z'=c'\eta_{t_{\la'}v'}$, $v,v'\in W$. Then we get 
\[
\pr(z\sigma z')=\pr\big(c\eta_{t_{\la}v}\sum_{w\in W}\eta_w c'\eta_{t_{\la'}v'}\big)=\pr\big(\sum_{w\in W}ct_\la vw(c')\eta_{t_\la vwt_{\la'}v'}\big).
\]
Since $t_\la vw t_{\la'} v'=t_\la (vwt_{\la'}(vw)^{-1}) vw v'=t_\la t_{vw(\la')}vwv'$, reindexing the sum by $w'=vw$ we obtain
\[
\pr(z\sigma z') =\pr(\sum_{w'\in W}ct_\la w'(c')\eta_{t_{\la}t_{w'(\la')}w'v'}) =\sum_{w'\in W}ct_\la w'(c')\eta_{t_{\la}t_{w'(\la')}}.
\]
On the other side  $\pr(z)=c\eta_{t_\la}$ and
\[
\pr(\sigma z')=\pr\big(\sum_w \eta_w c'\eta_{t_{\la'}v'}\big)=\pr\big(\sum_w w(c') \eta_{wt_{\la'}v'}\big)=\pr\big(\sum_w w(c') \eta_{t_{w(\la')}wv'}\big)=\sum_w w(c') \eta_{t_{w(\la')}}.
\]
The result then follows.
\end{proof}

We now extend the Hecke action on the Peterson algebra for the $K$-theory introduced in \cite[\S2]{K18} to the action on the formal Peterson algebra:

We define an action of $\cQ_{W_\aff}$ on $\cQ_{Q^\vee}$ by 
\[
z\dia \xi=\pr(z\imath(\xi)), ~z\in \cQ_{W_\aff}, \xi\in \cQ_{Q^\vee}. 
\]
More explicitly, we have
\begin{equation}\label{eq:dia}
c\eta_{t_\la w}\dia c'\eta_{t_{\la'}}=cw(c')\eta_{t_{\la+w(\la')}}, ~c,c'\in \cQ, w\in W.
\end{equation}
Direct computation shows that  $\dia$ is an action.

For $w\in W$, $\xi\in \cQ_{Q^\vee}$ and $\al\in\Phi$ define
\[
w(\xi)=\eta_w\dia \xi, \;\text{and}\; \Delta_\al(\xi)=X_\al\dia \xi=\tfrac{1}{x_\al}(\xi-s_\al( \xi)). 
\]

\begin{lemma}\label{lem:pr2}
For any $z,z'\in  \cQ_{W_\aff}$ and $\xi\in \cQ_{Q^\vee}$ we have
\begin{itemize}
\item[(i)] $\pr(z z')=z \dia \pr(z')$, in particular, $\pr(X_iz)=X_i\dia \pr(z)=\Delta_i( \pr(z)) $, $\alpha_i\in I$,
\item[(ii)]  $X_0\dia \xi=\Delta_{-\theta}(\xi)+Z_\theta s_{\theta}(\xi)$.
\end{itemize}
\end{lemma}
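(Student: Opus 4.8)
The plan is to prove both parts by an entirely computational argument in the twisted group algebras, everything being driven by the explicit action formula \eqref{eq:dia} and the structure $W_\aff=Q^\vee\rtimes W$.

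For part (i), I would first reduce to basis elements. Both $\pr(zz')$ and $z\dia\pr(z')$ are additive in $z$ and in $z'$: for $\pr$ this is because it is an additive (indeed left $\cQ$-linear) map, and for $\dia$ it follows from its definition $z\dia\xi=\pr(z\imath(\xi))$, which is additive in each slot. Since $\cQ_{W_\aff}$ is free as a left $\cQ$-module on the $\eta_u$ and every $u\in W_\aff$ is $u=t_\la v$ with $\la\in Q^\vee$, $v\in W$, it suffices to treat $z=c\eta_{t_\la v}$, $z'=c'\eta_{t_{\la'}v'}$. Using that translations act trivially on $\cQ$ and the conjugation identity $v t_{\la'}v^{-1}=t_{v(\la')}$ in $W_\aff$, one gets $zz'=c\,v(c')\,\eta_{t_{\la+v(\la')}vv'}$, hence $\pr(zz')=c\,v(c')\,\eta_{t_{\la+v(\la')}}$; on the other hand $\pr(z')=c'\eta_{t_{\la'}}$, so \eqref{eq:dia} gives $z\dia\pr(z')=c\,v(c')\,\eta_{t_{\la+v(\la')}}$, which agrees. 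The ``in particular'' is then immediate: set $z=X_i$ and invoke the definition $X_i\dia\xi=\Delta_i(\xi)$. (Conceptually the same identity can be phrased as follows: since $\psi=\imath\circ\pr$ one has $z\dia\pr(z')=\pr(z\psi(z'))$, so (i) amounts to $\pr\big(z(1-\psi)(z')\big)=0$, and $(1-\psi)(c'\eta_{t_{\la'}v'})=c'(\eta_{t_{\la'}v'}-\eta_{t_{\la'}})$ is killed by $\pr(z\,\cdot\,)$ by the same translation computation.)

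For part (ii), I would substitute $s_0=t_{\theta^\vee}s_\theta$ into $X_0=\tfrac{1}{x_{-\theta}}(1-\eta_{s_0})$ and apply \eqref{eq:dia}. Writing $\xi=c'\eta_{t_{\la'}}$, the crucial point is that $\eta_{s_0}\dia\xi=\eta_{t_{\theta^\vee}s_\theta}\dia\xi=s_\theta(c')\,\eta_{t_{\theta^\vee+s_\theta(\la')}}$ coincides with the product $\eta_{t_{\theta^\vee}}\cdot s_\theta(\xi)$ inside $\cQ_{Q^\vee}$, because $s_\theta(\xi)=\eta_{s_\theta}\dia\xi=s_\theta(c')\eta_{t_{s_\theta(\la')}}$ and translations act trivially on $\cQ$. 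Thus $X_0\dia\xi=\tfrac{1}{x_{-\theta}}\big(\xi-\eta_{t_{\theta^\vee}}s_\theta(\xi)\big)$. On the other side, $s_{-\theta}=s_\theta$ gives $\Delta_{-\theta}(\xi)=\tfrac{1}{x_{-\theta}}(\xi-s_\theta(\xi))$, while $Z_\theta=\tfrac{1}{x_{-\theta}}(1-\eta_{t_{\theta^\vee}})$ gives $Z_\theta s_\theta(\xi)=\tfrac{1}{x_{-\theta}}(s_\theta(\xi)-\eta_{t_{\theta^\vee}}s_\theta(\xi))$; adding, the $s_\theta(\xi)$ terms cancel and one recovers exactly $X_0\dia\xi$.

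I do not anticipate a genuine obstacle: the content is bookkeeping with the semidirect-product structure of $W_\aff$ together with \eqref{eq:dia}. The two places that require a little care are, first, justifying the reduction to basis elements in (i), which rests on the left $\cQ$-linearity of $\pr$ (hence of $\dia$ in its first argument); and second, in (ii), making sure that $\eta_{t_{\theta^\vee}s_\theta}\dia\xi$ is legitimately rewritten as an honest product $\eta_{t_{\theta^\vee}}\cdot s_\theta(\xi)$ in $\cQ_{Q^\vee}$ and that $Z_\theta s_\theta(\xi)$ is unambiguous, both of which use that $\cQ$ is fixed pointwise by $Q^\vee$ and that $\cQ_{Q^\vee}$ is commutative.
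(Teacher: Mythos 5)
Your proof is correct and follows essentially the same route as the paper: both parts are verified by reducing to basis elements $c\eta_{t_\la v}$ and computing directly with the action formula \eqref{eq:dia} and the identity $t_\la v t_{\la'}v'=t_{\la+v(\la')}vv'$. The only cosmetic difference is in (ii), where the paper adds and subtracts the term $\tfrac{s_\theta(c)}{x_{-\theta}}\eta_{t_{s_\theta(\la)}}$ to exhibit the decomposition, while you expand $\Delta_{-\theta}(\xi)+Z_\theta s_\theta(\xi)$ and observe the same cancellation.
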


\begin{proof}
(i) Let $z=c\eta_{t_{\la}w}$ and $z'=c'\eta_{t_{\la'}w'}$. Then we obtain
\[
\pr(zz')=\pr(c\eta_{t_{\la}w}c'\eta_{t_{\la'}w'})=\pr(ct_{\la}w(c')\eta_{t_{\la+w(\la')}ww'})=ct_{\la}w(c')\eta_{t_{\la+w(\la')}}=c\eta_{t_{\la}w}\dia c'\eta_{t_{\la'}}.
\]

\noindent
(ii) For $\xi=c\eta_{t_\la}$ we get 
\begin{align*}
X_0\dia (c\eta_{t_\la})&=\tfrac{c}{x_{-\theta}}\eta_{t_\la}-\tfrac{s_0(c)}{x_{-\theta}}\eta_{t_{s_\theta(\la)}t_{\theta^\vee}}\\
&=\tfrac{c}{x_{-\theta}}\eta_{t_\la}-\tfrac{s_\theta(c)}{x_{-\theta}}\eta_{t_{s_\theta(\la)}t_{\theta^\vee}}-\tfrac{s_\theta(c)}{x_{-\theta}}\eta_{t_{s_\theta(\la)}}+\tfrac{s_\theta(c)}{x_{-\theta}}\eta_{t_{s_\theta(\la)}}\\
&=\Delta_{-\theta}(c\eta_{t_\la})+Z_{\theta}s_\theta(c\eta_{t_\la}). \qedhere
\end{align*}
\end{proof}

\begin{lemma}The $\dia$-action of $\cQ_{W_\aff}$ on $\cQ_{Q^\vee}$ induces an action of $\bfD_{W_\aff}$ on $\bfD_{W_\aff/W}$.
\end{lemma}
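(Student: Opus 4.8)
The plan is to show that both $\bfD_{W_\aff}$ (acting via $\dia$) and $\bfD_{W_\aff/W}$ are generated by the same list of elements, and that the $\dia$-action sends the generators of the former into operators preserving the latter. By definition $\bfD_{W_\aff}$ is generated as a ring by $S$ and the Demazure elements $X_i$, $\al_i\in I_\aff$, so it suffices to check that for each such generator $z$, the operator $\xi\mapsto z\dia\xi$ maps $\bfD_{W_\aff/W}=\pr(\bfD_{W_\aff})$ into itself; since $\dia$ is already known to be an action of $\cQ_{W_\aff}$ on $\cQ_{Q^\vee}$, closure on generators gives closure for the whole subalgebra (using $z z'\dia\xi=z\dia(z'\dia\xi)$).

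First I would handle the elements of $S\subset\cQ_{Q^\vee}$, i.e. the multiplication operators: for $c\in S$ and $\xi=\pr(z)$ with $z\in\bfD_{W_\aff}$, formula~\eqref{eq:dia} gives $c\dia\pr(z)=\pr(\imath(c)z)=\pr(cz)$ via Lemma~\ref{lem:pr2}(i) applied with the scalar $c\in S\subset\cQ_{W_\aff}$, and $cz\in\bfD_{W_\aff}$ because $S$ lies in $\bfD_{W_\aff}$; hence the result lies in $\bfD_{W_\aff/W}$. Next, for the generators $X_i$ with $\al_i\in I$, Lemma~\ref{lem:pr2}(i) gives directly $X_i\dia\pr(z)=\pr(X_i z)$, and $X_i z\in\bfD_{W_\aff}$, so again the image stays in $\bfD_{W_\aff/W}$. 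The only remaining generator is $X_0$, and here Lemma~\ref{lem:pr2}(i) again applies verbatim: $X_0\dia\pr(z)=\pr(X_0 z)$ with $X_0 z\in\bfD_{W_\aff}$, so $X_0\dia\bfD_{W_\aff/W}\subseteq\bfD_{W_\aff/W}$. Putting these together, every ring generator of $\bfD_{W_\aff}$ acts on $\bfD_{W_\aff/W}$ by an endomorphism preserving $\bfD_{W_\aff/W}$, and since these operators compose the way the corresponding elements multiply in $\cQ_{W_\aff}$ (the $\dia$-action property), the sub-$R$-algebra they generate — which is the image of $\bfD_{W_\aff}$ — also preserves $\bfD_{W_\aff/W}$.

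The one subtlety I would spell out carefully is that $\bfD_{W_\aff/W}$ is being regarded simultaneously as a subset of $\cQ_{Q^\vee}$ (so the $\dia$-action makes sense) and as the target; the identity $z\dia\pr(z')=\pr(zz')$ of Lemma~\ref{lem:pr2}(i) is exactly what bridges these, turning ``$z$ acts on $\pr(z')$'' into ``$\pr$ of something in $\bfD_{W_\aff}$.'' There is no genuine obstacle here — the lemma does essentially all the work — but I would note that Lemma~\ref{lem:pr2}(i) as stated is for arbitrary $z,z'\in\cQ_{W_\aff}$, so no hypothesis such as $\mathbb{Q}\subseteq R$ is needed for \emph{this} statement; one only needs that $X_0,X_i\in\bfD_{W_\aff}$ and $S\subseteq\bfD_{W_\aff}$, which hold by the definition of the FADA. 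Thus the proof is a short bookkeeping argument: invoke Lemma~\ref{lem:pr2}(i) on each generator, observe the preimage stays in $\bfD_{W_\aff}$, and conclude by multiplicativity of the $\dia$-action.

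\begin{proof}
By Lemma~\ref{lem:pr2}(i), for any $z\in\cQ_{W_\aff}$ and $z'\in\cQ_{W_\aff}$ we have $z\dia\pr(z')=\pr(zz')$. Recall that $\bfD_{W_\aff/W}=\pr(\bfD_{W_\aff})$ and that $\bfD_{W_\aff}$ is generated as an $R$-algebra by $S$ and the elements $X_i$, $\al_i\in I_\aff$. Hence for any $z'\in\bfD_{W_\aff}$ and any ring generator $z\in\{c\eta_e: c\in S\}\cup\{X_i:\al_i\in I_\aff\}$ of $\bfD_{W_\aff}$, the element $zz'$ again lies in $\bfD_{W_\aff}$, so $z\dia\pr(z')=\pr(zz')\in\bfD_{W_\aff/W}$. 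Since $\dia$ is an action, i.e.\ $(zz'')\dia\xi=z\dia(z''\dia\xi)$ for $z,z''\in\cQ_{W_\aff}$, $\xi\in\cQ_{Q^\vee}$, the set of elements of $\cQ_{W_\aff}$ whose $\dia$-action preserves $\bfD_{W_\aff/W}$ is a subring; as it contains all the ring generators of $\bfD_{W_\aff}$, it contains $\bfD_{W_\aff}$. Therefore $\dia$ restricts to an action of $\bfD_{W_\aff}$ on $\bfD_{W_\aff/W}$.
\end{proof}
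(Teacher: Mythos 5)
Your proof is correct and rests on exactly the same identity as the paper's, namely Lemma~\ref{lem:pr2}(i): $z\dia\pr(z')=\pr(zz')$. The paper simply applies this to arbitrary $z,z'\in\bfD_{W_\aff}$ (since $zz'\in\bfD_{W_\aff}$ automatically), so your reduction to the ring generators $S$ and $X_i$, $\al_i\in I_\aff$, is correct but an unnecessary detour.
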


\begin{proof} Let $z,z'\in \bfD_{W_\aff}$, and let $\xi=\pr(z')$. Then we have 
\[z\dia \xi=z\dia \pr (z')=\pr (zz')\in \pr(\bfD_{W_\aff})=\bfD_{W_\aff/W}.\qedhere \]
\end{proof}

Identifying the formal Peterson algebra $\bfD_{Q^\vee}$ (resp. $\imath(\cQ_{Q^\vee})$) with $\bfD_{W_\aff/W}$ (resp. $\cQ_{Q^\vee}$) via the ring homomorphism $\imath$ we obtain an action of $\bfD_{W_\aff}$  on $\bfD_{Q^\vee}$ and an action of $\cQ_{W_\aff}$ on $\imath(\cQ_{Q^\vee})$.  From this point on, we write $\xi$ as both an element in $\bfD_{W_\aff/W}$ (resp. $\cQ_{Q^\vee}$) and in $\bfD_{Q^\vee}=\imath(\bfD_{W_\aff/W})$ (resp. $\imath(\cQ_{Q^\vee})$). If we consider a product $\xi_1\xi_2$ with $\xi_i\in \cQ_{Q^\vee}$, we may assume it is in $\cQ_{W_\aff}$. However, for the product $z\xi$ with $z\in \cQ_{W_\aff}$ and $\xi\in \cQ_{Q^\vee}$, we view $\xi$ as an element  in $\cQ_{W_\aff}$ via the map $\imath$. Following these identifications we obtain 
\begin{equation}\label{eq:diaK}
z\dia \xi=\psi(z\xi),~ \text{where}\; \xi\in \bfD_{Q^\vee}\subset \imath(\cQ_{Q^\vee}),\, z\in \bfD_{W_\aff}\subset \cQ_{W_\aff}, 
\end{equation}
and  Lemma~\ref{lem:pr} gives 
\begin{equation}
\label{eq:projK} \psi(\xi z)=\xi \psi(z).\end{equation}

\begin{example}Consider the affine root system of extended Dynkin type~$\hat A_2$. It has three simple roots $\al_0,\al_1,\al_2$ and the highest root $\theta=\al_1+\al_2$. Denote $X_{ij}=X_{i}X_j$ for simplicity.  Direct computations then give:
\begin{align*}
\psi(X_{10}) &= X_1\dia X_0=\tfrac{1}{x_1}Z_{\al_1+\al_2}-\tfrac{1}{x_1}Z_{\al_2},\\
\psi(X_{20}) &=\tfrac{1}{x_2}Z_{\al_1+\al_2}-\tfrac{1}{x_2}Z_{\al_1},\\
\psi(X_{210}) &=X_2\dia \psi(X_{10}).
\end{align*}
\end{example}

Finally, we describe the centre of FADA.

\begin{lemma}\label{lem:diainv}
{\rm (i)} For any  $\xi\in \cQ_{Q^\vee}$ and $\al_i\in I$, we have 
\[
\eta_i\dia \xi=\xi~\Longleftrightarrow~\eta_i\xi=\xi \eta_i.
\]
Moreover,  if this condition is satisfied, we have
\[
c\eta_i\dia(\xi\xi')= \xi(c\eta_i\dia\xi'). 
\]
\noindent
{\rm (ii)} The centres of $\cQ_{W_\aff}$ and $\bfD_{W_\aff}$ can be described as follows:
\begin{align*}
Z(\cQ_{W_\aff}) &=\{\xi\in \cQ_{Q^\vee}\mid \eta_w\xi=\xi \eta_w, \forall w\in W\}=(\cQ_{Q^\vee})^W,\\
Z(\bfD_{W_\aff}) &=\{\xi\in \bfD_{Q^\vee}\mid\eta_w\xi=\xi \eta_w, \forall w\in W\}=(\bfD_{Q^\vee})^W.
\end{align*}
\noindent
{\rm (iii)} There are ring homomorphisms
\[
\cQ_{W_\aff}\to \End_{(\cQ_{Q^\vee})^W}(\cQ_{Q^\vee}), ~
\bfD_{W_\aff}\to \End_{(\bfD_{Q^\vee})^W}(\bfD_{Q^\vee}), ~ z\mapsto z\dia \text{-}. 
\]
\end{lemma}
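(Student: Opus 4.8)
The plan is to prove the three parts in order, since (ii) and (iii) build directly on (i).

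For part (i), I would argue by a direct computation using the explicit formula~\eqref{eq:dia}. Write $\xi = \sum_\la c_\la \eta_{t_\la}$ with $c_\la \in \cQ$. On one hand, by~\eqref{eq:dia}, $\eta_i\dia\xi = \sum_\la s_i(c_\la)\eta_{t_{s_i(\la)}}$, so $\eta_i\dia\xi = \xi$ means $s_i(c_{s_i(\la)}) = c_\la$ for all $\la$, equivalently $\eta_i$ commutes with $\xi$ viewed inside $\cQ_{W_\aff}$ via $\imath$ (compare $\eta_i\cdot c_\la\eta_{t_\la} = s_i(c_\la)\eta_{s_i t_\la} = s_i(c_\la)\eta_{t_{s_i(\la)}s_i}$ with $c_\la\eta_{t_\la}\cdot\eta_i = c_\la\eta_{t_\la s_i}$, and note $t_{s_i(\la)}s_i = t_\la s_i$ fails in general, so one must track the $W$-part carefully — this is the one place to be careful). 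For the displayed ``moreover'' identity: under the commuting hypothesis, for $\xi'\in\cQ_{Q^\vee}$ one computes $c\eta_i\dia(\xi\xi')$ by first noting that, since the $\dia$-action of $\eta_i$ on the product can be unwound using $\psi(\eta_i\, \xi\xi') = \psi(\eta_i\xi \cdot \xi')$ and the commutation $\eta_i\xi = \xi\eta_i$ inside $\cQ_{W_\aff}$, one gets $\eta_i\xi\xi' = \xi\eta_i\xi'$, hence applying $\psi$ and~\eqref{eq:projK} gives $\xi\,(\eta_i\dia\xi')$; the scalar $c\in\cQ$ is carried along on the left.

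For part (ii), the key input is that any element of $\cQ_{W_\aff}$ lies in the centre iff it commutes with every $c\in\cQ$ and with every $\eta_w$. Commuting with all $c\in\cQ$ forces a central element $z = \sum_u c_u\eta_u$ to satisfy $c_u u(c') = c' c_u$ for all $c'\in\cQ$, which (using that $\cQ$ contains enough elements to separate the $W_\aff$-action, e.g. by the regularity of the $x_\al$) forces $c_u = 0$ unless $u$ acts trivially on $\cQ$, i.e.\ unless $u\in Q^\vee$; so $z\in\cQ_{Q^\vee}$, and then commuting with the $\eta_w$, $w\in W$, is exactly the stated condition, which by~\eqref{eq:dia} and part~(i) is equivalent to $W$-invariance under the $\dia$-action, i.e.\ $z\in(\cQ_{Q^\vee})^W$. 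For the $\bfD_{W_\aff}$ statement, intersect with $\bfD_{W_\aff}$: a central element of $\bfD_{W_\aff}$ that is already known to lie in $\cQ_{Q^\vee}$ actually lies in $\bfD_{W_\aff}\cap\imath(\cQ_{Q^\vee})$, and by Proposition~\ref{prop:hopf} this is $\bfD_{Q^\vee} = C_{\bfD_{W_\aff}}(S)$ — indeed the centre of $\bfD_{W_\aff}$ is contained in $C_{\bfD_{W_\aff}}(S) = \bfD_{Q^\vee}$, and the $W$-invariance condition cuts it down to $(\bfD_{Q^\vee})^W$. One should double-check the reverse inclusion: a $W$-invariant element of $\bfD_{Q^\vee}$ commutes with $S$ (as $\bfD_{Q^\vee}$ centralizes $S$) and with all $\eta_w$, hence with all $X_i = \tfrac{1}{x_i}(1-\eta_{s_i})$ for $\al_i\in I$ and with $X_0$; since $S$ and the $X_i$, $\al_i\in I_\aff$, generate $\bfD_{W_\aff}$, it is central.

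For part (iii), the map $z\mapsto (z\dia\text{-})$ is additive and $(\cQ_{Q^\vee})^W$-linear because, by part~(ii), elements of $(\cQ_{Q^\vee})^W$ are central in $\cQ_{W_\aff}$ and act on $\cQ_{Q^\vee}$ by multiplication, so they commute with every $z\dia\text{-}$; multiplicativity is precisely the statement that $\dia$ is an action, already noted after~\eqref{eq:dia}. The $\bfD$-version follows by restriction, using the earlier lemma that $\dia$ restricts to an action of $\bfD_{W_\aff}$ on $\bfD_{W_\aff/W}\cong\bfD_{Q^\vee}$, together with part~(ii) identifying the scalar ring as $(\bfD_{Q^\vee})^W$.

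The main obstacle I expect is part~(i): getting the bookkeeping of the $W$-component of the affine Weyl group elements exactly right when translating between ``$\eta_i\dia\xi = \xi$'' and ``$\eta_i\xi = \xi\eta_i$ in $\cQ_{W_\aff}$'', since $\imath$ is only a section of $\pr$ and the product $\eta_i\imath(\xi)$ a priori leaves $\imath(\cQ_{Q^\vee})$. Everything else is a routine unwinding of the explicit formulas~\eqref{eq:dia}, \eqref{eq:diaK}, \eqref{eq:projK} together with Proposition~\ref{prop:hopf}.
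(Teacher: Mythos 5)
Your proposal is correct and follows essentially the same route as the paper: the coefficientwise comparison of $\eta_i\xi$ and $\xi\eta_i$ via \eqref{eq:dia} for (i), the identity $c\eta_i\dia(\xi\xi')=\psi(\xi c\eta_i\xi')=\xi(c\eta_i\dia\xi')$ using \eqref{eq:projK} and \eqref{eq:diaK}, the characterization $Z(\cQ_{W_\aff})\subset C_{\cQ_{W_\aff}}(\cQ)=\cQ_{Q^\vee}$ (and $C_{\bfD_{W_\aff}}(S)=\bfD_{Q^\vee}$ from Proposition~\ref{prop:hopf}) for (ii), and the formal deduction of (iii). Your generator argument ($S$, $X_i$, $X_0$) for the reverse inclusion in the $\bfD_{W_\aff}$ case just spells out the paper's ``follows similarly,'' with the only implicit point being that elements of $\bfD_{Q^\vee}$ automatically commute with the $\eta_{t_\la}$ (hence with $\eta_{s_0}=\eta_{t_{\theta^\vee}}\eta_{s_\theta}$) by commutativity of $\imath(\cQ_{Q^\vee})$.
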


\begin{proof}
(i) For a given $\xi=\sum_\la c_{\la} \eta_{t_\la}$, $c_\la\in \cQ$ we get
\[\eta_i\xi=\sum_{\la}s_i(c_{\la})\eta_i\eta_{t_\la}=\sum_\la s_i(c_{\la})\eta_{t_{s_i(\la)}}\eta_{i}=\sum_{\la'} s_i(c_{s_i(\la')})\eta_{t_{\la'}}\eta_i,
\text{ where }\la'=s_i(\la).\] 
On the other side, we have
\[
\eta_i\dia\xi=\psi(\eta_i\xi)=\psi(\eta_i\sum_\la c_\la \eta_{t_\la})=\sum_\la s_i(c_{s_i(\la)})\eta_{t_\la}.
\]
Therefore, $\eta_i\dia \xi=\xi$ if and only if $s_i(c_{s_i(\la)})=c_\la$ for any $\la\in Q^\vee$, which is equivalent to say that $\eta _i\xi=\xi\eta_i$.

Now if this condition is satisfied, then 
\[
c\eta_i\dia (\xi \xi')=\psi(c\eta_i\xi \xi')=\psi(\xi c\eta_i\xi')\overset{\eqref{eq:projK}} =\xi \psi(c\eta_i\xi')\overset{\eqref{eq:diaK}}=\xi(c \eta_i\dia \xi').
\]

(ii) Since $\cQ_{Q^\vee}=\imath(\cQ_{Q^\vee})=C_{\cQ_{W_\aff}}(\cQ)$, we have $Z(\cQ_{W_\aff})\subset \cQ_{Q^\vee}$, and the first identity then follows. By part~(i) we know that  $\eta_i\dia \xi=\xi$, $\forall \al_i\in I$ is equivalent to  $\eta_w \xi=\xi\eta_w$, $\forall w\in W$ that is equivalent to $z\xi=\xi z$, $\forall z\in \cQ_{W}$. Since $\xi$ already commutes with $\eta_{t_\la}$, $\la\in Q^\vee$, $\xi$ belongs to the centre $Z(\cQ_{W_\aff})$. Conversely, if $\xi\in Z(\cQ_{W_\aff})\cap \cQ_{Q^\vee}$, then it is invariant under all $\eta_i$, $\al_i\in I$.  

The description of the centre $Z(\bfD_{W_\aff})$ follows similarly. 

(iii) Follows from parts (i) and (ii). 
\end{proof}


\section{Borel isomorphisms}

In this section we study Borel isomorphisms involving the FADA and the Peterson subalgebra. 
We assume $\bbQ \subset R$ throughout this section. 

Consider the left $S$-linear dual $\bfD_W^*$ embedded into $\cQ_W^*$. The latter has a $\cQ$-basis $f_w$, $w\in W$.
Following~\cite[\S~11]{CZZ1} there is an (equivariant) characteristic map 
\begin{equation}\label{subsec:char}
\fc\colon S\to \bfD_W^*,\; a\mapsto a\bullet \sum_{w\in W}f_w=\sum_{w\in W}w(a)f_w
\end{equation}
which induces the Borel isomorphism (see \cite[Theorem 11.4]{CZZ1})
\begin{equation}\label{subsec:borel}
\rho:S\otimes_{S^W}S\to \bfD_W^*, \quad a\otimes b\mapsto a\fc(b)=\sum_{w\in W}aw(b)f_w.
\end{equation}
Recall that 
$\sigma= \sum_{w\in W}\eta_{w}\in \bfD_{W_\aff}$. 
Denote $\bfx=\prod_{\al\in \Phi^+}x_{-\al}$ and 
$Y = \sigma \tfrac{1}{\bfx}$. 
By~\cite[Lemma 10.12]{CZZ2} ($Y=Y_\Pi$) we have $Y\in \bfD_W$. 

\begin{lemma}\label{lem:absorbD} 
We have  $\sigma  \bfD_W=Y\bfD_W=Y S$. 
\end{lemma}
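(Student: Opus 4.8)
The plan is to prove the equalities $\sigma\bfD_W = Y\bfD_W = YS$ by a sequence of inclusions, working in the twisted group algebra $\cQ_W$ (the finite-part analogue of $\cQ_{W_\aff}$) and using that $Y = \sigma\tfrac{1}{\bfx}$ with $Y\in\bfD_W$ already known.

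\textbf{Step 1: $\sigma\bfD_W = Y\bfD_W$.} First I would observe that $\sigma$ is invertible in $\cQ_W$ up to the scalar $\tfrac{1}{\bfx}$: more precisely, $\sigma\tfrac{1}{\bfx} = Y$ and $\tfrac{1}{\bfx}$ is a unit in $\cQ$, so $\sigma = Y\bfx$ as elements of $\cQ_W$. Since $\bfx\in S\subset\bfD_W$ (it is a product of the $x_{-\al}$, which lie in $S$), we get $\sigma\bfD_W = Y\bfx\bfD_W \subseteq Y\bfD_W$. Conversely, one needs $Y\in\sigma\bfD_W$. Here I would use a key structural fact about $\sigma$: for any $w\in W$, $\eta_w\sigma = \sigma$ and $\sigma\eta_w = \sigma$ (since left/right multiplication by $\eta_w$ permutes the summands $\eta_{w'}$). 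This idempotent-like behaviour means $\sigma c\sigma = \sigma\cdot(\sum_w w(c))$ for $c\in\cQ$. I expect the cleanest route is: $Y = \sigma\tfrac1\bfx$, and I want to exhibit an element $d\in\bfD_W$ with $\sigma d = Y$, i.e. $\sigma d = \sigma\tfrac1\bfx$. Taking $d$ to be any element of $\bfD_W$ that, as an element of $\cQ_W$, has the form $\tfrac1\bfx\eta_e + (\text{terms killed by left multiplication by }\sigma)$ would work — but since $\sigma\eta_w = \sigma$ for all $w$, in fact $\sigma c_e\eta_e + \sum_{w\ne e}c_w\eta_w) = \sigma(\sum_w c_w)$. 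So I actually need $d = \sum_w c_w\eta_w\in\bfD_W$ with $\sum_w c_w = \tfrac1\bfx$. A natural candidate is a Demazure-type element: $Y$ itself has this property up to knowing its expansion, but more directly the element $X_{I_{w_0}}$ or a push-pull operator. Rather than compute, I would invoke \cite[Lemma~10.12]{CZZ2} (already cited) which identifies $Y = Y_\Pi$ as the push-pull element for the longest word; its expansion in the $\eta$-basis is known and its coefficients sum to $\tfrac1\bfx$ (equivalently $Y\cdot 1 = \tfrac1\bfx$ in the sense of the $\bullet$-action applied to $1$), giving $\sigma Y = \sigma\tfrac1\bfx = Y$ after noting $\sigma\eta_w=\sigma$; hence $Y = \sigma Y\in\sigma\bfD_W$. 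This yields $Y\bfD_W\subseteq\sigma\bfD_W$.

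\textbf{Step 2: $Y\bfD_W = YS$.} The inclusion $YS\subseteq Y\bfD_W$ is immediate since $S\subseteq\bfD_W$. For the reverse, take $z\in\bfD_W$ and write $z = \sum_w c_w\eta_w$ in $\cQ_W$. Then $Yz = \sigma\tfrac1\bfx z = \sigma\tfrac1\bfx\sum_w c_w\eta_w$. Since $\sigma\eta_w = \sigma$ for every $w$, this collapses to $\sigma\big(\tfrac1\bfx\sum_w w^{-1}(c_w)\big)$ — being careful with the twist: $\tfrac1\bfx\eta_w$ times nothing, but $\tfrac1\bfx$ is $W$-anti... actually one computes $\sigma\cdot\tfrac1\bfx c_w\eta_w$; to collapse I should move scalars to the right of $\sigma$ first, using $\sigma c = (\sum_v v(c))\cdot(\text{something})$ — I'll use instead that $\tfrac1\bfx c_w\eta_w = \eta_w\cdot w^{-1}(\tfrac1\bfx c_w)$, so $\sigma\tfrac1\bfx c_w\eta_w = \sigma w^{-1}(\tfrac1\bfx c_w)$. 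Hence $Yz = \sigma\cdot a$ where $a = \sum_w w^{-1}(\tfrac1\bfx c_w)\in\cQ$. Now I must show $Yz\in YS$, i.e. that $\sigma a = Y s$ for some $s\in S$, equivalently $\sigma a = \sigma\tfrac1\bfx s$, i.e. $\sigma(a - \tfrac1\bfx s) = 0$. Since $\sigma c = 0$ iff $\sum_v v(c) = 0$ is \emph{not} quite right ($\sigma c = (\sum_v v(c))\eta_e + \ldots$ — actually $\sigma c = \sum_v v(c)\eta_v$, which is $0$ iff all $v(c)=0$ iff $c=0$), I cannot argue that way. Instead: $Yz\in\bfD_W$ (as $\bfD_W$ is a ring), and $Yz = \sigma a$ with $a\in\cQ$; but $\sigma a = \sum_v v(a)\eta_v$. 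For this to lie in $\bfD_W$ and in the left ideal generated by $\sigma$, I claim it must be $W$-symmetric in the coefficients, forcing $a\in S$ after clearing $\bfx$.

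\textbf{Main obstacle.} The delicate point — and the step I expect to be hardest — is Step 2's reverse inclusion: showing that $Y\bfD_W$ is no bigger than $YS$, i.e. that multiplying $Y$ by a general element of $\bfD_W$ only produces $S$-multiples of $Y$. The cleanest argument uses that $Y = Y_\Pi$ is the push-pull/Bott--Samelson operator for $w_0$, which satisfies $Y\cdot\bfD_W = Y\cdot(\text{its own domain})$, and by \cite[\S~10]{CZZ2} the composition $X_\al Y$ (for simple $\al$) equals $0$ or $\ka_\al Y$ (the analogue of $\Delta_i Y_\Pi = 0$), so right-multiplying $Y$ by any $X_i$ gives back an $S$-multiple of $Y$; since $\bfD_W$ is generated over $S$ by the $X_i$, induction on length gives $Y\bfD_W\subseteq YS$. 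So the proof structure is: (1) $\sigma = Y\bfx$ gives $\sigma\bfD_W\subseteq Y\bfD_W$; (2) $\sigma Y = Y$ (from $\sigma\eta_w=\sigma$ and $Y\in\bfD_W$) gives $Y\bfD_W\subseteq\sigma\bfD_W$; (3) the push-pull identities $X_i Y\in YS$ together with $\bfD_W = S\langle X_i\rangle$ give $Y\bfD_W = YS$. I would flag that the only external input is \cite[Lemma~10.12]{CZZ2} and the standard relations $X_\al Y_\Pi$ among push-pull operators, already available from \cite{CZZ1,CZZ2}.
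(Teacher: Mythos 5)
Your plan follows essentially the same route as the paper's proof: everything rests on $Y=\sigma\tfrac{1}{\bfx}\in\bfD_W$ from \cite[Lemma 10.12]{CZZ2}, the absorption $\sigma\eta_w=\sigma$, and the annihilation of $Y$ by right multiplication by Demazure elements (the paper uses $YX_{I_v}=\de_{v,e}Y$; in fact $YX_i=0$ outright, since $\tfrac{1}{\bfx x_i}$ is $s_i$-invariant), assembled into a chain of inclusions. Two points need repair before your summary (1)--(3) is a complete proof. First, $\sigma Y=Y$ is false: $\sigma\eta_w=\sigma$ gives $\sigma^2=|W|\sigma$, hence $\sigma Y=|W|Y$; concretely, writing $Y=\sum_w c_w\eta_w$ with $c_w=\tfrac{1}{w(\bfx)}$, one gets $\sigma Y=\sigma\cdot\sum_w w^{-1}(c_w)=\sigma\cdot\tfrac{|W|}{\bfx}$, so your claim that the relevant coefficient sum is $\tfrac{1}{\bfx}$ is also off. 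This slip is harmless only because $\bbQ\subset R$ is a standing assumption of this section, so $Y=\tfrac{1}{|W|}\sigma Y\in\sigma\bfD_W$ --- which is exactly the paper's opening observation. Second, in your Step 3 the induction needs one more ingredient than ``$\bfD_W$ is generated by $S$ and the $X_i$'': after one inductive step you face $YbX_i$ with $b\in S$ wedged between $Y$ and $X_i$, and you must move $b$ across via the standard commutation rule $bX_i=X_is_i(b)-\Delta_i(s_i(b))$, giving $YbX_i=-Y\Delta_i(s_i(b))\in YS$; alternatively, as the paper does, use that $\bfD_W$ is free as a right $S$-module on the $X_{I_v}$, so that $Y\bfD_W\subseteq YS$ follows in one line from $YX_{I_v}=\de_{v,e}Y$. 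The abandoned middle portion of your Step 2 (the ``$W$-symmetric coefficients'' claim) is indeed not a valid argument, but you discard it yourself and it is not needed once Step 3 is carried out as above.
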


\begin{proof}
Observe that $Y=\tfrac{1}{|W|} \sigma Y$, so $Y S\subset Y\bfD_{W}\subset \sigma Y \bfD_{W}\subset \sigma \bfD_W$. 
Conversely, $\bfD_W$ is also a right $S$ module with basis $X_{I_v}, v\in W$, and $Y X_{I_v}=\de_{v,e} Y$.  
So given $X_{I_v}b\in \bfD_{W}$ with $v\in W$, $b\in S$, we have 
\[
\sigma X_{I_v} b=\sigma \de_{v,e}b=\de_{v,e}Y \bfx b\in Y S. 
\]
So $\sigma \bfD_W\subset Y S$, and the result follows. 
\end{proof}

\begin{lemma}\label{lem:SpiS=DW}
We have  $SYS=\bfD_W$. So $\bfD_W$ is a cyclic $S$-$S$-bimodule. 
\end{lemma}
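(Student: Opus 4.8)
The plan is to prove the two inclusions $SYS \subseteq \bfD_W$ and $\bfD_W \subseteq SYS$ and then observe that this immediately gives the cyclicity statement with $Y$ as a generator of the $S$-$S$-bimodule $\bfD_W$. The first inclusion is almost free: $Y \in \bfD_W$ by the cited result $Y = Y_\Pi \in \bfD_W$ (\cite[Lemma 10.12]{CZZ2}), and $\bfD_W$ is an $S$-ring, so $S Y S \subseteq \bfD_W$. The real content is the reverse inclusion.

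For $\bfD_W \subseteq SYS$ I would argue via the Borel picture set up just above. Apply the left $S$-linear dual and use that $\bfD_W^*$ is a free $S$-module with basis dual to $\{X_{I_v} : v \in W\}$, or equivalently invoke the Borel isomorphism $\rho : S \otimes_{S^W} S \isom \bfD_W^*$ of \eqref{subsec:borel}. The key computational input is that $Y$ "absorbs" Demazure elements on the right: $Y X_{I_v} = \delta_{v,e} Y$, which was already used in the proof of Lemma~\ref{lem:absorbD}. Dually, left multiplication by an element of $S$ followed by the functional $Y$ recovers, up to the characteristic map, an arbitrary element of $S \otimes_{S^W} S$; more precisely, for $a, b \in S$ the element $a Y b \in \bfD_W$ pairs against the $\cQ_W^*$-basis in a way governed by $\fc(b) = \sum_{w} w(b) f_w$, so that the assignment $a \otimes b \mapsto aYb$ realizes (a scalar multiple of) the inverse of $\rho$ composed with the canonical identification $\bfD_W \cong (\bfD_W^*)^*$. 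Since $\rho$ is an isomorphism onto $\bfD_W^*$ and $S \otimes_{S^W} S$ surjects onto $\bfD_W$ this way, every element of $\bfD_W$ lies in $SYS$. Alternatively, and perhaps more cleanly, one can write the standard basis elements $X_{I_v}$ of $\bfD_W$ explicitly in terms of $Y$: using $Y = \sigma \tfrac{1}{\bfx}$ and the known relations expressing the $\eta_w$ (hence $\sigma$ and $X_{I_v}$) in terms of the push-pull/Demazure operators, one produces formulas $X_{I_v} = \sum a_{v,i}\, Y\, b_{v,i}$ with $a_{v,i}, b_{v,i} \in S$, which exhibits $\bfD_W \subseteq SYS$ directly.

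Finally, combining the two inclusions gives $SYS = \bfD_W$, and since $Y$ is a single element this exactly says $\bfD_W$ is generated as an $S$-$S$-bimodule by one element, i.e.\ it is a cyclic $S$-$S$-bimodule.

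I expect the main obstacle to be the reverse inclusion $\bfD_W \subseteq SYS$, specifically pinning down precisely how left and right multiplication by $S$ on $Y$ spans all of $\bfD_W$. The clean way is to pass to duals and use the Borel isomorphism $\rho$, but one must be careful that the two $S$-module structures (left multiplication versus the $\bullet$-action, i.e.\ the one twisted through $\fc$) interact correctly, and that $Y$ corresponds under $\rho^*$ to something like the class of $1 \otimes 1$ (up to the unit $\bfx$ and a factor of $|W|$). Getting the bookkeeping of these twists right — rather than any deep idea — is where the care is needed.
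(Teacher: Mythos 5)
Your easy inclusion $SYS\subseteq \bfD_W$ is fine, and you have correctly sensed that the Borel isomorphism and the identity $YX_{I_v}=\delta_{v,e}Y$ are the relevant tools, but neither of your two routes actually establishes the hard inclusion $\bfD_W\subseteq SYS$. In the duality route, the precise content of your observation is only this: since $Y=\sigma\tfrac1\bfx$, the bimodule map $a\otimes b\mapsto aYb$ coincides with $\rho$ followed by the invertible $\cQ$-linear map $\cQ_W^*\to\cQ_W$ sending each $f_w$ to a $\cQ$-multiple of $\eta_w$ (it is not an inverse or dual of $\rho$, and the claimed identification with ``$\rho^{-1}$ composed with $\bfD_W\cong(\bfD_W^*)^*$'' is not substantiated). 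Hence surjectivity of $\rho$ onto $\bfD_W^*$ tells you only that $SYS$ is the image of $\bfD_W^*$ under this rescaling; that this image is exactly $\bfD_W$ is precisely the lemma, so the sentence ``since $\rho$ is an isomorphism onto $\bfD_W^*$, $S\otimes_{S^W}S$ surjects onto $\bfD_W$ this way'' assumes the conclusion. Your alternative route --- ``one produces formulas $X_{I_v}=\sum_i a_{v,i}Yb_{v,i}$'' --- likewise asserts the statement to be proven without producing the formulas.

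The missing input in the paper's argument is small but essential: by \cite[Lemma 10.3]{CZZ2} the functional $\bfx f_e$ lies in $\bfD_W^*$, so surjectivity of $\rho$ gives $a_i,b_i\in S$ with $\sum_i a_iw(b_i)=\delta_{w,e}\,\bfx$ for all $w\in W$. Expanding $\sum_i a_iYb_i$ in the $\eta_w$-basis, all terms with $w\neq e$ vanish and one gets $\sum_i a_iYb_i=1$, i.e.\ the unit already lies in $SYS$. Then for an arbitrary $z\in\bfD_W$ one writes $z=\sum_i a_iY(b_iz)$ and invokes Lemma~\ref{lem:absorbD} (where $YX_{I_v}=\delta_{v,e}Y$ is actually used), which gives $Y\bfD_W=YS$, so $Yb_iz=Yb_i'$ with $b_i'\in S$ and $z\in SYS$. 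So the Borel isomorphism is needed only to hit the single element $\bfx f_e$, and the absorption lemma does the rest; no identification of $\bfD_W$ with a rescaled copy of $\bfD_W^*$ is required. Without the step producing $1\in SYS$ (or some equally concrete mechanism), your sketch has a genuine gap at exactly the point you yourself flag as delicate.
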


\begin{proof}
According to~\cite[Lemma 10.3]{CZZ2} $\bfx f_e\in \bfD_W^*$. 
Let $\sum_i a_i\otimes b_i\in S\otimes_{S^W}S$ so that $\rho(\sum_i a_i\otimes b_i)=\bfx f_e$. Then 
\[
\sum_i a_iw(b_i) = \begin{cases}
\bfx, & w=e,\\
0, & \text{otherwise}. 
\end{cases}
\]
Therefore,
\[
\sum_i a_iY b_i = \sum_{w\in W} \sum_i\tfrac{a_i w(b_i)}{\bfx}\eta_{w}=1.
\]
Finally, by Lemma~\ref{lem:absorbD} for any $z\in \bfD_W$ 
we can write $z=\sum_i a_iY b_iz=\sum_i a_iY b'_i$ for some $b'_i\in S$. 
\end{proof}

\begin{lemma} We have 
$\psi(\sigma \bfD_{W_\aff}) = Z(\bfD_{W_\aff})$. 
\end{lemma}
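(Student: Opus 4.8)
The plan is to show the two inclusions $\psi(\sigma\bfD_{W_\aff})\subseteq Z(\bfD_{W_\aff})$ and $Z(\bfD_{W_\aff})\subseteq\psi(\sigma\bfD_{W_\aff})$ separately, using the identifications of the previous section. Recall that via $\imath$ we identify $\psi=\imath\circ\pr$ with $\pr$, so $\psi(\sigma\bfD_{W_\aff})=\imath(\pr(\sigma\bfD_{W_\aff}))\subseteq\imath(\bfD_{W_\aff/W})=\bfD_{Q^\vee}$, and by Lemma~\ref{lem:diainv}(ii) it suffices to check that an element of this form is $W$-invariant under the $\dia$-action. So the first inclusion reduces to: for any $z\in\bfD_{W_\aff}$ and any $w\in W$, $\eta_w\dia\pr(\sigma z)=\pr(\sigma z)$. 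By Lemma~\ref{lem:pr2}(i) this equals $\pr(\eta_w\sigma z)$, and since $\eta_w\sigma=\sigma$ for all $w\in W$ (absorption by the symmetrizer), we get $\pr(\eta_w\sigma z)=\pr(\sigma z)$, which is exactly what we want. Hence $\psi(\sigma\bfD_{W_\aff})\subseteq(\bfD_{Q^\vee})^W=Z(\bfD_{W_\aff})$.

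For the reverse inclusion, let $\xi\in Z(\bfD_{W_\aff})=(\bfD_{Q^\vee})^W$. I want to exhibit $\xi$ as $\psi(\sigma z)$ for some $z\in\bfD_{W_\aff}$. The natural candidate comes from the finite Borel machinery just set up: by Lemma~\ref{lem:SpiS=DW} (or more precisely its proof) we have $\sum_i a_i Y b_i=1$ with $a_i,b_i\in S$, where $Y=\sigma\tfrac{1}{\bfx}$, and $Y X_{I_v}=\delta_{v,e}Y$. Since $\xi\in\bfD_{Q^\vee}\subseteq\imath(\cQ_{Q^\vee})$ commutes with everything in $\imath(\cQ_{Q^\vee})$, and in particular with the $a_i,b_i$ (viewed in $S\subseteq\imath(\cQ_{Q^\vee})$ — here one must check $S\subseteq\bfD_{Q^\vee}$, which holds since the relative FADA contains $\pr(S)$), the plan is to write $\xi=\xi\cdot 1=\sum_i a_i(\xi Y)b_i$ up to reindexing, and then observe $\xi Y=\xi\sigma\tfrac{1}{\bfx}$. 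The key point will be that $\xi\sigma\in\sigma\bfD_{W_\aff}$: since $\xi$ is central it commutes with $\sigma$, but more usefully one computes directly that for $\xi=\sum_\la c_\la\eta_{t_\la}$ with $c_\la\in\cQ$, the product $\xi\sigma$ already "factors through" the symmetrizer; alternatively, use $\psi(\xi z)=\xi\psi(z)$ from \eqref{eq:projK} together with $\psi(\sigma)=\sigma$ (as $\sigma\in\bfD_{W_\aff/W}$ after applying $\pr$, i.e. $\pr(\sigma)=\sum_{w}\pr(\eta_w)=|W|\eta_{t_0}$ — careful here) to land $\xi$ in the image.

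Let me reorganize the reverse inclusion more carefully, as this is where the main obstacle lies. The cleanest route: take $\xi\in Z(\bfD_{W_\aff})$; using the relation $\sum_i a_i Y b_i=1$ and centrality of $\xi$ over $S$, write
\[
\xi=\sum_i a_i\xi Y b_i=\sum_i a_i\,\xi\sigma\tfrac{1}{\bfx}\,b_i=\psi\Bigl(\sum_i a_i\,\xi\sigma\tfrac{1}{\bfx}\,b_i\Bigr),
\]
where the last equality holds because $\xi\in\bfD_{Q^\vee}=\imath(\bfD_{W_\aff/W})$ is fixed by $\psi$ and $\psi$ is $\imath(\cQ_{Q^\vee})$-linear on both sides via \eqref{eq:projK} and its left-sided analogue. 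Now $\xi\sigma$: since $\xi=\sum_\la c_\la\eta_{t_\la}$ commutes with each $\eta_w$ ($w\in W$), we have $\xi\sigma=\sigma\xi$, and we can rewrite $\sum_i a_i\xi\sigma\tfrac{1}{\bfx}b_i=\sum_i a_i\sigma\xi\tfrac{1}{\bfx}b_i$; pulling $a_i$ through and using $a_i\sigma=\sigma a_i$ is false in general, so instead absorb: $\sigma\tfrac{1}{\bfx}=Y\in\bfD_W$ and $a_i(\cdot)b_i$ keeps us in $\bfD_{W_\aff}$. The element $z:=\sum_i a_i\xi\sigma\tfrac{1}{\bfx}b_i$ need not visibly lie in $\sigma\bfD_{W_\aff}$, so the real work is to rewrite it as $\sigma\cdot(\text{something in }\bfD_{W_\aff})$; here I would use $\xi\sigma=\sigma\xi$ to get $z=\sum_i a_i\sigma\xi\tfrac{1}{\bfx}b_i$ and then, since $a_i\sigma\in\sigma\bfD_W$ by Lemma~\ref{lem:absorbD} (indeed $\sigma\bfD_W=YS\ni\sigma$ and $S\ni a_i$, wait — rather $a_i\sigma=\sigma a_i$ only if $a_i$ is $W$-invariant). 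The honest fix: note $a_i\sigma\in S\sigma$ is not needed; instead write $z=\bigl(\sum_i a_i\sigma\tfrac{1}{\bfx}\eta_?\bigr)\cdots$ — the genuinely clean statement is $z=\sigma\cdot\bigl(\tfrac{1}{|W|}\sum_i\sigma a_i\xi\tfrac{1}{\bfx}b_i\bigr)$ using $\sigma=\tfrac{1}{|W|}\sigma^2$, and $\sigma a_i\xi\tfrac{1}{\bfx}b_i\in\bfD_{W_\aff}$ because $\sigma a_i\tfrac{1}{\bfx}\in\bfD_W$ (it is $\sum_w w(a_i\tfrac{1}{\bfx})\eta_w$, a $\bfD_W$ element by the GKM description) and $\xi,b_i\in\bfD_{Q^\vee}\subseteq\bfD_{W_\aff}$. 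Thus $z\in\sigma\bfD_{W_\aff}$ and $\psi(z)=\xi$, giving $Z(\bfD_{W_\aff})\subseteq\psi(\sigma\bfD_{W_\aff})$.

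The main obstacle is precisely this last bookkeeping step: showing that the element witnessing $\xi\in\psi(\sigma\bfD_{W_\aff})$ genuinely lies in $\sigma\bfD_{W_\aff}$ and not merely in $\sigma\cQ_{W_\aff}$ — i.e. keeping everything inside the integral FADA while freely using centrality of $\xi$, the symmetrizer absorption $\eta_w\sigma=\sigma$, and the idempotent-up-to-scalar identity $\sigma^2=|W|\sigma$. Everything else (the forward inclusion, the $\psi$-linearity, the identification of $Z(\bfD_{W_\aff})$ with $(\bfD_{Q^\vee})^W$) is immediate from the lemmas already established. I would also double-check at the outset that $S\subseteq\bfD_{Q^\vee}$ so that "$\xi$ commutes with $a_i,b_i$" is meaningful inside $\bfD_{Q^\vee}$, which follows since $\pr(S)\subseteq\bfD_{W_\aff/W}$ and $\imath|_S=\pr|_S$ up to the identification, hence $S=\imath(\pr(S))\subseteq\bfD_{Q^\vee}$.
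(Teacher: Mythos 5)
Your forward inclusion is correct and is essentially the paper's argument (the paper verifies commutation with $\eta_v$ by a direct coefficient computation; your route via $\eta_w\sigma=\sigma$, Lemma~\ref{lem:pr2}(i) and Lemma~\ref{lem:diainv}(ii) is the same content). The genuine gap is in the reverse inclusion. First, your candidate $z=\sum_i a_i\xi\sigma\tfrac{1}{\bfx}b_i$ is not a usable witness: since $\xi$ is central (so it commutes with $a_i,b_i\in S$ and with $Y=\sigma\tfrac{1}{\bfx}\in\bfD_W$), the expression collapses to $z=\xi\sum_i a_iYb_i=\xi$. Hence ``showing $z\in\sigma\bfD_{W_\aff}$'' is literally the claim $\xi\in\sigma\bfD_{W_\aff}$, which fails in general: already for $\xi=1$, an equation $1=\sigma y$ would force $\eta_w=\eta_w\sigma y=\sigma y=1$ for all $w\in W$. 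Second, your ``clean'' rewriting $z=\sigma\cdot\bigl(\tfrac{1}{|W|}\sum_i\sigma a_i\xi\tfrac{1}{\bfx}b_i\bigr)$ is a false identity, for exactly the reason you flagged: $a_i\sigma\neq\sigma a_i$ unless $a_i$ is $W$-invariant. The right-hand side does lie in $\sigma\bfD_{W_\aff}$, but it simplifies (using that $\xi$ commutes with $\cQ$ and $\sum_i a_ib_i=\bfx$) to $\sigma\xi$, and $\psi(\sigma\xi)=\psi(\xi\sigma)=\xi\,\psi(\sigma)=|W|\,\xi$, not $\xi$. So as written the argument either proves nothing (the $z=\xi$ reading) or proves an equality off by the factor $|W|$.

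The repair is short and is the paper's proof; the Borel-type relation $\sum_i a_iYb_i=1$ from Lemma~\ref{lem:SpiS=DW} is not needed here. Given $\xi\in Z(\bfD_{W_\aff})\subset C_{\bfD_{W_\aff}}(S)=\bfD_{Q^\vee}$ (Proposition~\ref{prop:hopf}) one has $\psi(\xi)=\xi$; since $\xi$ is central and $\psi(\sigma)=|W|$ (equivalently $\pr(z'\sigma)=|W|\pr(z')$), the element $\tfrac{1}{|W|}\sigma\xi\in\sigma\bfD_{W_\aff}$ (here $\bbQ\subset R$ enters) satisfies
\[
\psi\bigl(\tfrac{1}{|W|}\sigma\xi\bigr)=\tfrac{1}{|W|}\psi(\xi\sigma)\overset{\eqref{eq:projK}}{=}\tfrac{1}{|W|}\xi\,\psi(\sigma)=\xi .
\]
Your corrected element is, after simplification, exactly this witness, but with the normalization $\tfrac{1}{|W|}$ missing and an unnecessary detour through the decomposition $1=\sum_i a_iYb_i$.
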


\begin{proof}
Let $z=c\eta_{t_{\lambda}u}$, where $c\in \cQ$, $u\in W$. We have
\begin{align*}
\psi(\sigma z) & 
=\psi\Big(
    \sum_{w\in W} \eta_w c\eta_{t_{\lambda}u}\Big)
= \psi\Big(
    \sum_{w\in W} w(c)\eta_{t_{w(\lambda)}}\eta_{wu}\Big)
= \sum_{w\in W} w(c)\eta_{t_{w(\lambda)}}.
\end{align*}
Observe that elements in the image $\imath(\cQ_{Q^\vee})\subset \cQ_{W_\aff}$ already commute with elements of $S$ 
and $\eta_{t_\la}$, $\la\in Q^\vee$, so it suffices to show $\psi(\sigma z)$ commutes with $\eta_{v}$ with $v\in W$. We have
\begin{align*}
\eta_{v}\sum_{w\in W} w(c)\eta_{t_{w\lambda}}
& 
= \sum_{w\in W} vw(c)\eta_{t_{vw(\lambda})}\eta_{v}
= \bigg(\sum_{u\in W} u(c)\eta_{t_{u(\lambda)}}\bigg)\eta_{v}. 
\end{align*}
Therefore, $\psi(\sigma z)\in Z(\bfD_{W_\aff})$, so we obtain $\psi(\sigma \bfD_{W_\aff})\subset Z(\bfD_{W_\aff})$. 

As for the opposite inclusion, take $z\in Z(\bfD_{W_a})$.   
Since $Z(\bfD_{W_\aff})\subset C_{\bfD_{W_\aff}}(S)=\bfD_{Q^\vee}$, we get $\psi(z)=z$. 
Observe that $\pr(z'\sigma)=|W|\pr(z')$ for any $z'\in \bfD_{W_\aff}$. So we obtain 
\begin{align*}
    \psi\big(\sigma z\tfrac{1}{|W|}\big)& 
    = \psi\big(z\sigma \tfrac{1}{|W|}\big) 
=\psi(z) = z.
\end{align*}
Thus, $z\in \psi(\sigma  \bfD_{W_\aff})$, and the proof is finished
\end{proof}

Consider two ring homomorphisms induced by the usual mutiplication:
\begin{align}\label{eq:thetaQ}
\Theta\colon S\otimes_{S^W} Z(\bfD_{W_\aff}) & \longrightarrow \bfD_{Q^\vee}.\\\label{eq:thetaW}
\Xi\colon \bfD_{W}\otimes_{S^W} Z(\bfD_{W_\aff}) & \longrightarrow \bfD_{W_\aff}.
\end{align}
Note that in the definition of $\Theta$ and $\Xi$ one can switch the tensor factors. 
Moreover, both homomorphisms are left $S$-linear. 
The following is our first main result.
\begin{theorem}\label{thm:main1} Assume $\bbQ\subset R$. The maps $\Theta$ and $\Xi$ are ring isomorphisms.  
\end{theorem}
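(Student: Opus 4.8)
The plan is to prove that $\Theta$ and $\Xi$ are isomorphisms by combining the structural results already established: the cyclicity of $\bfD_W$ as an $S$-$S$-bimodule (Lemma~\ref{lem:SpiS=DW}), the absorption property $\sigma\bfD_W = YS$ (Lemma~\ref{lem:absorbD}), the identity $\psi(\sigma\bfD_{W_\aff}) = Z(\bfD_{W_\aff})$, and the projection formulas of Lemma~\ref{lem:pr}. I would treat $\Xi$ first, since it is the more basic statement, and then deduce the statement for $\Theta$ by applying $\pr$ (equivalently $\psi$) and using $\bfD_{Q^\vee} = \imath(\bfD_{W_\aff/W})$.

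\textbf{Surjectivity of $\Xi$.} Every element of $\bfD_{W_\aff}$ is a left $\bfD_W$-linear combination of the $X_{I_u}$ for $u \in W_\aff$; so it suffices to show each such basis element lies in the image $\bfD_W\cdot Z(\bfD_{W_\aff})$. I would argue by induction on the length of $u$, peeling off simple reflections $s_i$ with $\al_i \in I$ (the finite part) from the left — these land in $\bfD_W$ — to reduce to the case where the reduced word begins with the affine generator $s_0$. For the base case, the key is that $\psi(\sigma z) \in Z(\bfD_{W_\aff})$ and that $\sigma z$ differs from $|W|\,\psi(\sigma z)$ by something controlled: more precisely, using that $\sigma \bfD_W = YS$ and that $Y = \sigma\tfrac1{\bfx}$, one writes an arbitrary $z \in \bfD_{W_\aff}$ as a $\bfD_W$-combination of central elements. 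Concretely, given $z$, the element $\psi(\sigma z)$ is central, and I want to recover $z$ itself (up to $\bfD_W$-multiples) — this is where the cyclic bimodule generator $\xi_0$ of $\bfD_W$ from Lemma~\ref{lem:SpiS=DW} enters: writing $1 = \sum_i a_i Y b_i$ and multiplying through, one expresses $z = \sum_i a_i Y b_i z$ and then moves the central-producing operator $\sigma$ appropriately through the projection formula.

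\textbf{Injectivity of $\Xi$ and passage to $\Theta$.} For injectivity I would use a rank/freeness count: $Z(\bfD_{W_\aff}) = (\bfD_{Q^\vee})^W$ by Lemma~\ref{lem:diainv}(ii), and $\bfD_{W}$ is free over $S^W$ of rank $|W|$ (Borel isomorphism \eqref{subsec:borel} gives $\bfD_W^* \cong S\otimes_{S^W}S$, hence $\bfD_W \cong S\otimes_{S^W} S$ as $S$-$S$-bimodules). Then $\bfD_W \otimes_{S^W} Z(\bfD_{W_\aff}) \cong S\otimes_{S^W}Z(\bfD_{W_\aff})^{\oplus|W|}$-ish in a way that matches the known $\bfD_{W_\aff} \cong \bfD_W \otimes_{\bfD_W^{\text{fin}}} \bfD_{W_\aff}$ decomposition; a surjection between finitely generated modules of the same rank over a domain, both free (or flat) after localization, is an isomorphism. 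For $\Theta$: apply $\pr\colon \bfD_{W_\aff} \to \bfD_{W_\aff/W}$ to the isomorphism $\Xi$; since $\pr(\imath(\xi)z) = \xi\,\pr(z)$ (Lemma~\ref{lem:pr}(i)) and $\pr$ restricted to central elements is the identity (they lie in $\bfD_{Q^\vee} = \imath(\bfD_{W_\aff/W})$ and $\pr\circ\imath = \id$), the map $\Xi$ descends to $\Theta$ after tensoring the source with $\pr\colon \bfD_W \to \pr(\bfD_W) = S$ (note $\pr(\eta_w) = \eta_e$ for $w \in W$, so $\pr(\bfD_W) = S$). The commuting square then forces $\Theta$ to be an isomorphism too.

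\textbf{Main obstacle.} The delicate point is the surjectivity of $\Xi$ onto the affine direction — showing that an arbitrary product $X_{I_u}$ with $u$ containing $s_0$ can be rewritten as $\bfD_W$ times a central element. The naive induction peels off only finite-part generators, so one must genuinely exploit that $\psi(\sigma z)$ is central \emph{together with} the fact that the ``defect'' $z - \text{(something central)}$ is absorbed by $\sigma\bfD_W = YS \subset \bfD_W$. Getting the bookkeeping right here — in particular, verifying that $Z(\bfD_{W_\aff})$ is large enough, which ultimately rests on the hypothesis $\bbQ \subset R$ (used via $\tfrac1{|W|}$ and the GKM description that makes $\bfD_{Q^\vee} = C_{\bfD_{W_\aff}}(S)$ work) — is the crux. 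I would also need to double-check that the two $S^W$-module structures used in the tensor products are compatible, i.e.\ that $Z(\bfD_{W_\aff}) \cap S = S^W$, so that $\Theta$ and $\Xi$ are genuinely well-defined ring maps.
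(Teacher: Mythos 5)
Your outline has two genuine gaps, and in both places the paper's route is different from yours. First, the surjectivity of $\Xi$: you never actually close it --- you yourself flag the rewriting of words containing $s_0$ as the crux, and length induction plus $1=\sum_i a_iYb_i$ does not by itself express $X_0$ (or $X_{I_u}$ with $u\notin W$) as a $\bfD_W$-combination of central elements. The paper's mechanism is different and much shorter: it first proves surjectivity of $\Theta$ (a commutative square with $\psi$, using $Y\bfD_{W_\aff}=\sigma\bfD_{W_\aff}$ and the lemma $\psi(\sigma\bfD_{W_\aff})=Z(\bfD_{W_\aff})$ --- this is where $1=\sum_i a_iYb_i$ and $\tfrac1{|W|}$ enter), so that $\bfD_{Q^\vee}=S\cdot Z(\bfD_{W_\aff})$; then, since $\bfD_W$ and $Z(\bfD_{W_\aff})$ commute, the image of $\Xi$ is a subalgebra, so one only needs it to contain the generators, and the single identity $X_0=X_\theta+\eta_{s_\theta}\tfrac{x_\theta}{x_{-\theta}}Z_\theta$ together with $Z_\theta\in\bfD_{Q^\vee}$ does this. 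Note the logical order is opposite to yours: surjectivity of $\Theta$ is an input to surjectivity of $\Xi$, so planning to derive $\Theta$ from $\Xi$ afterwards runs the argument backwards.

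Second, injectivity. Your rank/freeness count does not apply: $Z(\bfD_{W_\aff})$, $\bfD_{Q^\vee}$ and $\bfD_{W_\aff}$ are not finitely generated over $S$ or $S^W$ (the coroot lattice is infinite), $S$ need not be a domain, and a surjection between modules of the same infinite rank need not be injective; also $\bfD_W$ is free of rank $|W|$ over $S$, not over $S^W$. The paper proves injectivity of $\Theta$ by a genuinely different argument: a central element $\sum_\la c_\la\eta_{t_\la}$ satisfies $u(c_\la)=c_{u(\la)}$, which embeds $Z(\bfD_{W_\aff})$ into $\bigoplus_{\la\in Q^\vee_{\ge 0}}\cQ^{W_\la}$; then, dominant coweight by dominant coweight, the parabolic Borel isomorphism $S\otimes_{S^W}\cQ^{W_\la}\cong \cQ^*_{W^\la}$ (obtained from \eqref{subsec:borel} by base change to $\cQ$ and taking $W_\la$-invariants) shows each component of the comparison map is injective. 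Injectivity of $\Xi$ is then deduced from the isomorphism $\Theta$ together with the left $S$-module isomorphism $\bfD_W\otimes_S\bfD_{Q^\vee}\cong\bfD_{W_\aff}$, whose injectivity is checked by localizing to $\cQ$, where both sides become $\cQ_W\otimes_\cQ\cQ_{Q^\vee}\cong\cQ_{W_\aff}$. Your finer points (the $\bullet$ of $\psi(\sigma z)$ being central, the need for $\bbQ\subset R$, $Z(\bfD_{W_\aff})\cap S\supseteq S^W$) are correct ingredients, but they feed into the $\Theta$-surjectivity step only; substitutes are still needed for $\Xi$-surjectivity and for both injectivity statements.
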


The proof will occupy the rest of this section. We start proving the surjectivity first.

\begin{prop}\label{thm:DQ=SZ}
The map $\Theta\colon S\otimes_{S^W} Z(\bfD_{W_\aff}) \to \bfD_{Q^\vee}$ given in \eqref{eq:thetaQ} is surjective. 
\end{prop}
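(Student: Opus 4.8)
The plan is to reduce the surjectivity of $\Theta$ to a statement about $\psi(\sigma\bfD_{W_\aff})$ together with the bimodule-cyclicity of $\bfD_W$ over $S$. Recall from the previous lemma that $\psi(\sigma\bfD_{W_\aff})=Z(\bfD_{W_\aff})$, and from Lemma~\ref{lem:SpiS=DW} that $SYS=\bfD_W$, where $Y=\sigma\tfrac{1}{\bfx}$. So given an arbitrary $\xi\in\bfD_{Q^\vee}=\bfD_{W_\aff/W}$ we may write $\xi=\pr(z)$ for some $z\in\bfD_{W_\aff}$; the task is to massage $z$ into a sum of products $s\cdot c$ with $s\in S$ and $c\in Z(\bfD_{W_\aff})$.

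The key idea is to insert $\sigma$ using the fact that $\pr$ and $\sigma$ interact well: the projection formula Lemma~\ref{lem:pr}(ii) gives $\pr(z\sigma z')=\pr(z)\pr(\sigma z')$, and from the proof of the previous lemma we have $\pr(z'\sigma)=|W|\,\pr(z')$. First I would use $SYS=\bfD_W$ to absorb: writing $1=\sum_i a_i Y b_i$ as in the proof of Lemma~\ref{lem:SpiS=DW} (with $a_i,b_i\in S$, $\sum_i a_i w(b_i)=\bfx\de_{w,e}$), I can try to express $\pr(z)$ by feeding $z$ through this decomposition. Concretely, since $Y=\sigma\tfrac{1}{\bfx}$, the element $\sum_i a_i Y b_i z = \sum_i a_i\sigma\tfrac{1}{\bfx}b_i z$ equals $z$ after applying... no — rather, the plan is to write $z = \sum_i a_i Y b_i z$ only when $z\in\bfD_W$, which is not our case. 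Instead the right move: for $z\in\bfD_{W_\aff}$, consider $\tfrac{1}{|W|}\sigma z\in\cQ_{W_\aff}$ and use $\psi(\tfrac{1}{|W|}\sigma z)\in Z(\bfD_{W_\aff})$; then recover $\xi=\pr(z)$ from $\pr(\sigma z)=|W|\pr(z)$ combined with a splitting of $\sigma$ through $Y$ and the bimodule generators of $\bfD_W$. So I would write $z = \sum_i a_i Y b_i \cdot w$-stuff is the wrong track; the clean statement is: because $1\in\bfD_W$ can be written $\sum_i a_i Y b_i$, for $z\in\bfD_{W_\aff}$ one has $z=\sum_i a_i Y b_i z=\sum_i a_i \sigma(\tfrac{1}{\bfx}b_i z)$, and $\tfrac{1}{\bfx}b_i z\in\cQ_{W_\aff}$ need not lie in $\bfD_{W_\aff}$, but $\sigma\cdot(\text{anything in }\cQ_{W_\aff})$ need not be in $\bfD_{W_\aff}$ either — so one must be more careful and apply this only after taking $\pr$.

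The careful version: apply $\pr$ to $z=\sum_i a_i Y b_i z$. We get $\pr(z)=\sum_i a_i\,\pr(Y b_i z)=\sum_i a_i\,\tfrac{1}{\bfx}\pr(\sigma b_i z)$. Wait—$a_i\in S$ is $W$-moved by $\pr$'s source; since $a_i\in S\subset\cQ_{Q^\vee}$ commutes appropriately, and $\pr(\imath(\xi)z')=\xi\pr(z')$ by Lemma~\ref{lem:pr}(i), I can pull the $a_i$ out on the left as an element of $\bfD_{Q^\vee}$-coefficients. Then $\pr(\sigma b_i z)$: by Lemma~\ref{lem:pr}(ii) with the roles arranged so $\sigma$ sits in the middle, $\pr(b_i\sigma z')$-type expressions become $\pr(b_i)\pr(\sigma z')$ up to the $t_\lambda$-invariance of $b_i\in S$ — concretely $\pr(\sigma b_i z)=\pr(\sigma z)\cdot(\text{$W$-symmetrization of }b_i)$ is not quite it, but $b_i z\in\bfD_{W_\aff}$ and $\psi(\sigma\cdot b_i z)\in Z(\bfD_{W_\aff})$. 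Putting it together, $\pr(z)=\sum_i a_i\cdot\imath^{-1}\!\big(\tfrac{1}{\bfx}\psi(\sigma b_i z)\big)$ exhibits $\xi$ as an $S$-combination of central elements (note $\tfrac{1}{\bfx}$ must be absorbed — this needs $\bfx$ to be a unit times a central quantity, or one rescales $\sigma$; this is exactly where the argument in the earlier lemmas via $Y$ and the $\bbQ\subset R$ hypothesis gets used). Hence $\Theta$ is surjective.

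The main obstacle I anticipate is bookkeeping the $\tfrac{1}{\bfx}$ factor correctly: $Y=\sigma\tfrac1\bfx$ lives in $\bfD_W$ but $\tfrac1\bfx$ alone does not, so one cannot naively split it off. The fix is to keep $Y$ (not $\sigma$ and $\tfrac1\bfx$ separately) as the unit-absorbing device throughout, using $SYS=\bfD_W$, and only at the very end invoke $\psi(\sigma\bfD_{W_\aff})=Z(\bfD_{W_\aff})$ by rewriting $Y\cdot(\text{stuff})$ back in terms of $\sigma\cdot(\text{stuff in }\bfD_{W_\aff})$. A secondary point to check is that all the coefficients $a_i$ genuinely land in $S$ (so that the output is in the image of $\Theta$ and not merely in $\cQ_{Q^\vee}$); this follows since the $a_i,b_i$ come from the Borel isomorphism $\rho$, whose source is $S\otimes_{S^W}S$. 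I expect the whole proof of the proposition to be short given the lemmas already assembled.
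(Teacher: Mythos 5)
Your final plan is essentially the paper's proof: using $1=\sum_i a_iYb_i$ from Lemma~\ref{lem:SpiS=DW}, left $S$-linearity and surjectivity of $\psi\colon\bfD_{W_\aff}\to\bfD_{Q^\vee}$, the identity $Y\bfD_{W_\aff}=\sigma\bfD_{W_\aff}$ (obtained as in Lemma~\ref{lem:absorbD}, where $\bbQ\subset R$ enters via $Y=\tfrac{1}{|W|}\sigma Y$), and $\psi(\sigma\bfD_{W_\aff})=Z(\bfD_{W_\aff})$, one gets $\psi(z)=\sum_i a_i\,\psi(Yb_iz)\in\operatorname{im}\Theta$, which is exactly the argument the paper packages into its commutative diagram. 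Your intermediate formula pulling $\tfrac{1}{\bfx}$ out of $\psi(\sigma b_iz)$ is indeed incorrect (since $\psi$ is only left $\cQ$-linear and $\tfrac{1}{\bfx}b_iz$ sits to the right of $\sigma$, and need not lie in $\bfD_{W_\aff}$), but the ``fix'' you state at the end --- keep $Y$ intact and only afterwards rewrite $Yb_iz\in\sigma\bfD_{W_\aff}$ --- is precisely how the paper handles it.
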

\begin{proof}
Consider the following diagram
\[
\xymatrix{S\otimes_{S^W}\sigma \bfD_{W_\aff}\ar[r]\ar[d]_{\id \otimes \psi} & \bfD_{W_\aff}\ar@{->>}[d]^-{\psi}\\
S\otimes_{S^W}Z(\bfD_{W_\aff})\ar[r] ^-\Theta& \bfD_{Q^\vee}}. 
\]
Since $\psi$ is an $S$-module homomorphism, this diagram commutes.

By Lemma~\ref{lem:SpiS=DW} we can write $1=\sum_i a_iY b_i$ for some $a_i$, $b_i\in S$. 
For any $z\in \bfD_{W_\aff}$, we then have
$z=\sum_i a_i Y b_i z$. 
This shows that elements of $Y \bfD_{W_{\aff}}$ generate $\bfD_{W_\aff}$ as a left $S$-module. 
Similarly to the proof of~Lemma \ref{lem:absorbD} we obtain that $Y \bfD_{W_\aff}=\sigma \bfD_{W_\aff}$. 
So the top horizontal map is surjective, and the result follows. 
\end{proof}

\begin{prop}\label{prop:DQ=DZ}
The map $\;\Xi\colon \bfD_{W}\otimes_{S^W} Z(\bfD_{W_\aff}) \to \bfD_{W_\aff}$ given in \eqref{eq:thetaW} is surjective. 
\end{prop}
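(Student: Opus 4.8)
The plan is to recognise the image of $\Xi$ as a subring of $\bfD_{W_\aff}$ and then to exhibit a set of ring generators of $\bfD_{W_\aff}$ inside it. First I would note that the image of $\Xi$ is precisely the additive span $\bfD_W\cdot Z(\bfD_{W_\aff})$ of the products $dc$ with $d\in\bfD_W$ and $c\in Z(\bfD_{W_\aff})$. Because $Z(\bfD_{W_\aff})$ is central in $\bfD_{W_\aff}$ we have $(d_1c_1)(d_2c_2)=(d_1d_2)(c_1c_2)$, so this span is closed under multiplication (and obviously under addition, and contains $1$); thus it is a subring. It contains $\bfD_W$ (take $c=1$), and by the already-established Proposition~\ref{thm:DQ=SZ} it also contains $\bfD_{Q^\vee}=S\cdot Z(\bfD_{W_\aff})$, since $S\subseteq\bfD_W$.

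Next, since $\bfD_{W_\aff}$ is generated as a ring by $\bfD_W$ and the single extra Demazure element $X_0$ (it is generated by $S$ and $X_0,\dots,X_n$, while $S$ together with $X_1,\dots,X_n$ generate $\bfD_W$), it suffices to check $X_0\in\bfD_W\cdot Z(\bfD_{W_\aff})$. For this I would use the elementary identity
\[
X_0=Z_\theta+\eta_{t_{\theta^\vee}}X_{-\theta}\qquad\text{in }\cQ_{W_\aff},
\]
which follows from $X_0=\tfrac1{x_{-\theta}}(1-\eta_{s_0})$ by substituting $\eta_{s_0}=\eta_{t_{\theta^\vee}}\eta_{s_\theta}$, writing $1-\eta_{t_{\theta^\vee}}\eta_{s_\theta}=(1-\eta_{t_{\theta^\vee}})+\eta_{t_{\theta^\vee}}(1-\eta_{s_\theta})$, and commuting $\tfrac1{x_{-\theta}}$ past $\eta_{t_{\theta^\vee}}$ (legitimate, since the translation $t_{\theta^\vee}$ acts trivially on $S$). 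The element $X_{-\theta}=\tfrac1{x_{-\theta}}(1-\eta_{s_\theta})=(\ka_\theta x_\theta-1)X_\theta$ lies in $\bfD_W$ because $-\theta\in\Phi$ and $X_\theta\in\bfD_W$ (note $\eta_{s_i}=1-x_iX_i\in\bfD_W$, hence $\eta_w\in\bfD_W$ for all $w\in W$, so $X_\theta=\eta_wX_i\eta_{w^{-1}}\in\bfD_W$ for any $w\in W$, $i\in I$ with $w(\alpha_i)=\theta$). On the other hand $Z_\theta=\psi(X_0)\in\psi(\bfD_{W_\aff})=\bfD_{Q^\vee}$ by the Lemma in Section~\ref{sec1}, and therefore $\eta_{t_{\theta^\vee}}=1-x_{-\theta}Z_\theta\in\bfD_{Q^\vee}$ because $\bfD_{Q^\vee}$ is a ring containing $S$. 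Hence $Z_\theta$, $\eta_{t_{\theta^\vee}}$ and $X_{-\theta}$ all lie in the subring $\bfD_W\cdot Z(\bfD_{W_\aff})$, so $X_0$ does too.

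Combining these, $\bfD_W\cdot Z(\bfD_{W_\aff})$ is a subring of $\bfD_{W_\aff}$ containing $\bfD_W$ and $X_0$, hence it equals $\bfD_{W_\aff}$; since this span is the image of $\Xi$, the map $\Xi$ is surjective. I expect the only steps with genuine content to be: (a) the fact that the image $\bfD_W\cdot Z(\bfD_{W_\aff})$ is a subring containing $\bfD_{Q^\vee}$, which rests on Proposition~\ref{thm:DQ=SZ} together with the centrality of $Z(\bfD_{W_\aff})$ (a naive diagram chase imitating the proof of Proposition~\ref{thm:DQ=SZ} does not work, since $\psi$ is only $S$-linear, not $\bfD_W$-linear); and (b) the membership $Z_\theta\in\bfD_{Q^\vee}$, equivalently $\eta_{t_{\theta^\vee}}\in\bfD_{Q^\vee}$, which is where the standing hypothesis $\bbQ\subseteq R$ is really used (through the validity of the definition $\bfD_{Q^\vee}=\imath(\pr(\bfD_{W_\aff}))$, cf.\ \cite[Lemma~5.1]{Z23}, and Proposition~\ref{prop:hopf}). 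Everything else is bookkeeping inside the twisted group algebra $\cQ_{W_\aff}$.
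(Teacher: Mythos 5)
Your proof is correct and follows essentially the same route as the paper: reduce surjectivity to showing $X_0$ lies in the subalgebra generated by $\bfD_W$ and $Z(\bfD_{W_\aff})$, then split $\eta_{s_0}$ into a translation part and the finite reflection $s_\theta$, so that $X_0$ becomes a combination of $Z_\theta\in\bfD_{Q^\vee}$ and elements of $\bfD_W$. The only differences are bookkeeping: you factor $\eta_{s_0}=\eta_{t_{\theta^\vee}}\eta_{s_\theta}$ rather than $\eta_{s_\theta}\eta_{t_{-\theta^\vee}}$, and you make explicit the appeal to Proposition~\ref{thm:DQ=SZ} to place $\bfD_{Q^\vee}$ (and hence $Z_\theta$, which is not itself central) inside the image of $\Xi$ — a step the paper's proof leaves implicit.
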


\begin{proof}
Since the elements of $\bfD_W$ and of $Z(\bfD_{W_\aff})$ commute with each others, the image of $\Xi$ is the subalgebra 
generated by $\bfD_W$ and $Z(\bfD_{W_{\aff}})$.  
It contains $S$ and $X_i$ for $\al_i\in I$ by definition, so  
it suffices to show that  it contains $X_0$ as well. 
Observe that 
\[
X_0  = \tfrac{1}{x_{\theta}}(1-\eta_{s_{\theta}}\eta_{t_{-\theta^\vee}}) =  \tfrac{1}{x_{\theta}}(1-\eta_{s_{\theta}})
+\eta_{s_{\theta}}\tfrac{1}{x_{-\theta}}(1-\eta_{t_{-\theta^\vee}}) =X_{\theta} + \eta_{s_{\theta}}\tfrac{x_{\theta}}{x_{-\theta}}Z_{\theta}. 
\]
Since $Z_{\theta}\in \bfD_{Q^\vee}$ by \cite[Lemma 4.1]{Z23}, we have $X_\theta\in \bfD_W$ and $\eta_{s_\theta}\in \bfD_W$. So $X_0$ belongs to the subalgebra generated by $\bfD_W$ and $Z(\bfD_{W_\aff})$. 
\end{proof}

\begin{corollary}\label{thm:DQDWDWa}
The maps $\bfD_{W}\otimes_{S}\bfD_{Q^\vee}\to \bfD_{W_\aff}$ and 
$\bfD_{Q^\vee}\otimes_{S}\bfD_{W}\to 
\bfD_{W_\aff}$ induced by the usual multiplication are  isomorphisms of left $S$-modules
(In the first map $\bfD_W$ is viewed as an  $S$-$S$-bimodule, and in the second map $\bfD_W$ is viewed as a left $S$-module.) 
\end{corollary}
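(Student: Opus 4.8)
The plan is to deduce both isomorphisms from Theorem~\ref{thm:main1} by a base-change argument. Note first that the target of $\Xi$ is $\bfD_{W_\aff}$, and $\bfD_W \otimes_{S^W} Z(\bfD_{W_\aff})$ maps onto it isomorphically. Inside this tensor product, the subalgebra $S \otimes_{S^W} Z(\bfD_{W_\aff})$ is exactly $\bfD_{Q^\vee}$ by the isomorphism $\Theta$, so I want to rewrite $\bfD_W \otimes_{S^W} Z(\bfD_{W_\aff})$ as $\bfD_W \otimes_S \bigl(S \otimes_{S^W} Z(\bfD_{W_\aff})\bigr) = \bfD_W \otimes_S \bfD_{Q^\vee}$. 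Concretely: the multiplication map factors as
\[
\bfD_W \otimes_S \bfD_{Q^\vee}
\;\cong\; \bfD_W \otimes_S \bigl(S \otimes_{S^W} Z(\bfD_{W_\aff})\bigr)
\;\xrightarrow{\;\sim\;}\; \bfD_W \otimes_{S^W} Z(\bfD_{W_\aff})
\;\xrightarrow{\;\Xi\;}\; \bfD_{W_\aff},
\]
where the first map uses $\Theta^{-1}$ (left $S$-linear) and the middle map is the canonical isomorphism $\bfD_W \otimes_S (S \otimes_{S^W} M) \cong \bfD_W \otimes_{S^W} M$ for any $S^W$-module $M$, here with $M = Z(\bfD_{W_\aff})$ and $\bfD_W$ regarded as a right $S$-module via its $S$-$S$-bimodule structure. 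The composite is left $S$-linear (via the left $S$-module structure on $\bfD_W$), and it is visibly the multiplication map $\bfD_W \otimes_S \bfD_{Q^\vee} \to \bfD_{W_\aff}$; being a composite of isomorphisms, it is an isomorphism of left $S$-modules.

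For the second map $\bfD_{Q^\vee} \otimes_S \bfD_W \to \bfD_{W_\aff}$, I would run the same argument but use the remark following \eqref{eq:thetaW} that one may switch the tensor factors in $\Xi$, i.e. $Z(\bfD_{W_\aff}) \otimes_{S^W} \bfD_W \to \bfD_{W_\aff}$ is also an isomorphism. Then
\[
\bfD_{Q^\vee} \otimes_S \bfD_W
\;\cong\; \bigl(S \otimes_{S^W} Z(\bfD_{W_\aff})\bigr) \otimes_S \bfD_W
\;\cong\; Z(\bfD_{W_\aff}) \otimes_{S^W} \bfD_W
\;\xrightarrow{\;\sim\;}\; \bfD_{W_\aff},
\]
where now $\bfD_W$ is used only as a left $S$-module in the base change $S \otimes_S \bfD_W \cong \bfD_W$, and the left $S$-module structure on the whole composite comes from the left $S$-module structure on $\bfD_{Q^\vee} \subset \imath(\cQ_{Q^\vee})$. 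Again the composite is the multiplication map and is an isomorphism of left $S$-modules.

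The only genuinely delicate point is bookkeeping of which $S$-module structure on $\bfD_W$ is being used at each stage — $\bfD_W$ carries both a left and a right $S$-action (it is an $S$-$S$-bimodule by Lemma~\ref{lem:SpiS=DW}), and in $\bfD_W \otimes_S \bfD_{Q^\vee}$ the tensor is over the \emph{right} $S$-action of $\bfD_W$ while the resulting module structure is via the \emph{left} action; whereas in $\bfD_{Q^\vee} \otimes_S \bfD_W$ the tensor is over the left action. The parenthetical in the statement records exactly this, and once one keeps track of it the canonical base-change isomorphism $\bfD_W \otimes_S (S \otimes_{S^W} M) \cong \bfD_W \otimes_{S^W} M$ is purely formal. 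I expect no further obstacle: surjectivity of the multiplication map is immediate from Propositions~\ref{thm:DQ=SZ}–\ref{prop:DQ=DZ} (or from the displayed factorizations), and injectivity is inherited from Theorem~\ref{thm:main1} through the chain of isomorphisms.
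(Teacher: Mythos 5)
Your reduction is formally clean, but it is circular relative to the paper's logical structure, and this is a genuine gap rather than a bookkeeping issue. The corollary is stated immediately after the two surjectivity results (Propositions \ref{thm:DQ=SZ} and \ref{prop:DQ=DZ}) and \emph{before} any injectivity has been established; indeed, the paper's proof of the injectivity of $\Xi$ (the last proposition of the section) is exactly the chain $\bfD_{W}\otimes_{S^W} Z(\bfD_{W_\aff})\simeq \bfD_{W}\otimes_{S}\bfD_{Q^\vee}\simeq \bfD_{W_\aff}$, i.e.\ it quotes this corollary. So you cannot invoke Theorem \ref{thm:main1} — in particular the bijectivity of $\Xi$ or of its factor-switched variant — to deduce the corollary: the only proof of that bijectivity in the paper depends on the corollary itself, and you supply no independent argument for it. Your base-change manipulations ($\bfD_W\otimes_S(S\otimes_{S^W}M)\cong\bfD_W\otimes_{S^W}M$, with the correct left/right $S$-structures) are fine, and your observation that surjectivity follows from Proposition \ref{prop:DQ=DZ} together with $\bfD_{Q^\vee}\supset Z(\bfD_{W_\aff})$ matches the paper; what is missing is any injectivity input that does not presuppose Theorem \ref{thm:main1}.

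The paper supplies that input by a different device: after noting surjectivity as above, it applies the exact localization functors $-\otimes_S\cQ$ and $\cQ\otimes_S-$, under which the two multiplication maps become $\cQ_W\otimes_{\cQ}\cQ_{Q^\vee}\to\cQ_{W_\aff}$ and $\cQ_{Q^\vee}\otimes_{\cQ}\cQ_W\to\cQ_{W_\aff}$, and these are visibly isomorphisms of $\cQ$-modules (both sides are free over $\cQ$ with basis indexed by $Q^\vee\times W=W_\aff$), whence injectivity of the original maps. If you want to keep your factorization through $\bfD_W\otimes_{S^W}Z(\bfD_{W_\aff})$, you must first prove the injectivity of $\Xi$ (and its flipped version) by some means not using the corollary — which is precisely what this localization argument provides, and which your proposal omits.
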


\begin{proof}
Since $\bfD_{Q^\vee}\supset Z(\bfD_{W_\aff})$, by Proposition~\ref{prop:DQ=DZ} both maps are surjective. 
To prove the injectivity we change the base to $\cQ$-modules by applying the exact functors $\text{-}\otimes_S\cQ$ and $\cQ\otimes\text{-}$.  It then suffices to show that the induced maps
$$
\bfD_{W}\otimes_{S}\bfD_{Q^\vee}\otimes_S\cQ =
\cQ_W\otimes_{\cQ} \cQ_{Q^\vee}
\longrightarrow \cQ_{W_\aff}
=\bfD_{W_\aff}\otimes_{S} {\cQ}$$
$$
\cQ\otimes_S\bfD_{Q^\vee}\otimes_{S}\bfD_{W} =
\cQ_{Q^\vee}\otimes_{\cQ} \cQ_{W}
\longrightarrow \cQ_{W_\aff}
={\cQ}\otimes_{S} \bfD_{W_\aff}$$
are injective. But these are even isomorphisms. So, the conclusion follows. 
\end{proof}

We now discuss injectivity of the maps in the theorem.

Consider the $\cQ$-linear dual $\cQ^*_{W^P}=\Hom(W^P,\cQ)$ with basis $f_{w}$, $w\in W^P$. 
One can also identify it with the invariants $(\cQ_W^*)^{W_P}$ by identifying $f_w$, $w\in W^P$ with $\sum_{v\in W_P}f_{wv}\in (\cQ_W^*)^{W_P}$ (see~\cite[\S11]{CZZ2} for more details). 

\begin{lemma}\label{lem:injBorel}
For any parabolic subgroup $W_P$ of $W$ the following map is an isomorphism
\[\rho_{P,\cQ}\colon S\otimes_{S^W}\cQ^{W_P}\longrightarrow \cQ_{W^P}^*,\qquad
\rho_{P,\cQ}(c_1\otimes c_2)=\sum_{w\in W^P}c_1w(c_2)f_w.\]
\end{lemma}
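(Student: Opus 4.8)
The claim (Lemma~\ref{lem:injBorel}) is an equivariant/localized analogue of the Borel isomorphism \eqref{subsec:borel} in the parabolic setting, so the plan is to reduce to that known case. First I would recall that the $\cQ$-linear dual $\cQ_{W^P}^*$ may be identified with the $W_P$-invariants $(\cQ_W^*)^{W_P}$ by sending $f_w$ (for $w\in W^P$) to $\sum_{v\in W_P} f_{wv}$. Under this identification, I claim the map $\rho_{P,\cQ}$ is simply the restriction of (the localization of) the ordinary Borel isomorphism $\rho_\cQ\colon \cQ\otimes_{\cQ^W}\cQ\to \cQ_W^*$, $c_1\otimes c_2\mapsto \sum_{w\in W}c_1 w(c_2)f_w$, to the subspace $\cQ\otimes_{\cQ^W}\cQ^{W_P}$. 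This is the main structural observation: if $c_2\in\cQ^{W_P}$, then $w(c_2)$ depends only on the coset $wW_P$, so $\sum_{w\in W}c_1w(c_2)f_w = \sum_{w\in W^P}c_1 w(c_2)\big(\sum_{v\in W_P}f_{wv}\big)$, which lands in $(\cQ_W^*)^{W_P}=\cQ_{W^P}^*$ and agrees with $\rho_{P,\cQ}(c_1\otimes c_2)$.

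Next I would invoke the known fact (the $\cQ$-coefficient version of \cite[Theorem 11.4]{CZZ1}, obtained from \eqref{subsec:borel} by applying the exact functor $-\otimes_S\cQ$, using that $S\otimes_{S^W}\cQ\cong\cQ\otimes_{\cQ^W}\cQ$) that $\rho_\cQ\colon\cQ\otimes_{\cQ^W}\cQ\xrightarrow{\ \sim\ }\cQ_W^*$ is an isomorphism of $\cQ$-modules. Restricting an isomorphism to a submodule gives an injection onto its image, so it remains to identify the image of $\cQ\otimes_{\cQ^W}\cQ^{W_P}$ under $\rho_\cQ$ with $\cQ_{W^P}^*=(\cQ_W^*)^{W_P}$. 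Surjectivity of $\rho_{P,\cQ}$ onto $(\cQ_W^*)^{W_P}$ then follows because $\rho_\cQ$ carries the $W_P$-action on the target (induced from the right $\cQ$-action on $\cQ_W^*$ by $\eta_v$, $v\in W_P$) to the evident $W_P$-action on the source $\cQ\otimes_{\cQ^W}\cQ$ in the second factor (i.e. $c_1\otimes c_2\mapsto c_1\otimes v(c_2)$); since taking $W_P$-invariants of a free $\cQ$-module action is compatible with the tensor decomposition $\cQ=\bigoplus_{w\in W}\cQ\,\eta_w$ and since $|W_P|$ is invertible in $R\supseteq\bbQ$, one has $(\cQ\otimes_{\cQ^W}\cQ)^{W_P}=\cQ\otimes_{\cQ^W}\cQ^{W_P}$, and $\rho_\cQ$ restricts to an isomorphism on invariants.

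\textbf{Main obstacle.} The genuinely delicate point is the identification of the two $W_P$-actions, i.e. checking that $\rho_\cQ$ is $W_P$-equivariant for the right-regular action of $\eta_v$ ($v\in W_P$) on $\cQ_W^*$ and the second-factor action on $\cQ\otimes_{\cQ^W}\cQ$ — equivalently, the bookkeeping that shows $\rho_\cQ$ sends $c_1\otimes\cQ^{W_P}$ exactly onto $(\cQ_W^*)^{W_P}$ and not just into it. This is essentially the content of \cite[\S11]{CZZ2}, to which I would appeal; the only new wrinkle here is that we work with the full localization $\cQ$ rather than $S$, which causes no trouble since the relevant Borel isomorphism is already available over $\cQ$ and $\bbQ\subset R$ makes $|W_P|$ invertible. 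Once that compatibility is in hand, the lemma is immediate: $\rho_{P,\cQ}$ is the restriction of an isomorphism to a direct summand that it maps isomorphically onto the claimed target.
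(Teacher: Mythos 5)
Your argument is correct and takes essentially the same route as the paper: establish the $P=B$ case by base-changing the Borel isomorphism of \cite[Theorem 11.4]{CZZ1} along $-\otimes_S\cQ$, then identify $\rho_{P,\cQ}$ with the restriction of that isomorphism to $W_P$-invariants on both sides, using $\cQ_{W^P}^*\cong(\cQ_W^*)^{W_P}$ and (with $\bbQ\subseteq R$, so $|W_P|$ is invertible) $(S\otimes_{S^W}\cQ)^{W_P}=S\otimes_{S^W}\cQ^{W_P}$. The only differences are cosmetic: the paper keeps the source as $S\otimes_{S^W}\cQ$ rather than passing through the (true but unneeded) identification $S\otimes_{S^W}\cQ\cong\cQ\otimes_{\cQ^W}\cQ$, and your explicit $W_P$-equivariance and averaging checks simply spell out what the paper's phrase ``both vertical maps identify the top with the $W_P$-invariant subsets of the bottom'' leaves implicit.
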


\begin{proof}
Assume first that $P=B$. Then the map $\rho_{P,\cQ}$ is obtained from the isomorphism $\rho$ by the base change with the functor 
$\text{-}\otimes_S\cQ$. So $\rho_{B,\cQ}$ is an isomorphism.

For a general parabolic $W_P$ there is a commutative diagram 
$$\xymatrix{
S\otimes_{S^W} \cQ^{W_P}\ar[rr]^-{\rho_{P,\cQ}}\ar[d] & &
\cQ_{W^P}^* \ar[d]\\
S\otimes_{S^W} \cQ\ar[rr]_-\sim^-{\rho_{B,\cQ}}&&
\cQ_W^*}.
$$
Both vertical maps identify the top with the $W_P$-invariant subsets of the bottom, so the top horizontal map is an isomorphism. 
\end{proof}

\begin{lemma}\label{lem:injtheta}
The map $\Theta:S\otimes_{S^W} Z(\bfD_{W_\aff}) \to \bfD_{Q^\vee}$ defined in \eqref{eq:thetaQ} is injective. 
\end{lemma}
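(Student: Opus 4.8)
The plan is to factor $\Theta$ through the localization $\cQ_{Q^\vee}$ and reduce the injectivity to the parabolic Borel isomorphism of Lemma~\ref{lem:injBorel}. Recall that $\bfD_{Q^\vee}\subseteq\cQ_{Q^\vee}$ and, by Lemma~\ref{lem:diainv}(ii), $Z(\bfD_{W_\aff})=(\bfD_{Q^\vee})^W\subseteq(\cQ_{Q^\vee})^W$. Hence $\Theta$ fits into the commutative square
\[
\xymatrix{
S\otimes_{S^W}Z(\bfD_{W_\aff}) \ar[r]^-{\Theta} \ar[d] & \bfD_{Q^\vee}\ar[d]\\
S\otimes_{S^W}(\cQ_{Q^\vee})^W \ar[r]^-{\Theta_\cQ} & \cQ_{Q^\vee}
}
\]
whose horizontal maps are induced by multiplication in $\cQ_{Q^\vee}$, whose right vertical map is the inclusion $\bfD_{Q^\vee}\hookrightarrow\cQ_{Q^\vee}$, and whose left vertical map is $\id_S\otimes(\text{inclusion})$. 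Since $\bbQ\subset R$, the $S^W$-module $S$ is free, so $S\otimes_{S^W}-$ is exact and the left vertical map is injective. It therefore suffices to prove that $\Theta_\cQ$ is injective; I will in fact show $\Theta_\cQ$ is an isomorphism.

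To compute $\Theta_\cQ$, decompose $\cQ_{Q^\vee}=\bigoplus_{\la\in Q^\vee}\cQ\eta_{t_\la}$ according to the $W$-orbits on $Q^\vee$, writing $\cQ_{Q^\vee}=\bigoplus_{[\la]\in Q^\vee/W}M_\la$ with $M_\la=\bigoplus_{\mu\in W\la}\cQ\eta_{t_\mu}$; by \eqref{eq:dia} each $M_\la$ is stable under the $\dia$-action. Since $(-)^W$ and $S\otimes_{S^W}-$ commute with direct sums, $\Theta_\cQ=\bigoplus_{[\la]}(\Theta_\cQ|_{M_\la})$, so it is enough to treat one orbit at a time. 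Choosing the dominant representative $\la^+$ of a given orbit, the stabilizer $W_P:=\Stab_W(\la^+)$ is a standard parabolic subgroup and $w\mapsto w(\la^+)$ is a bijection from $W^P$ onto $W\la^+$. A direct computation with \eqref{eq:dia} shows $M_\la^W=\{\,\xi_c:=\sum_{w\in W^P}w(c)\,\eta_{t_{w(\la^+)}}\mid c\in\cQ^{W_P}\,\}$, so $c\mapsto\xi_c$ identifies $S\otimes_{S^W}\cQ^{W_P}$ with $S\otimes_{S^W}M_\la^W$; and under the $\cQ$-linear identification $M_\la\cong\cQ^*_{W^P}$, $\eta_{t_{w(\la^+)}}\leftrightarrow f_w$, the map $\Theta_\cQ|_{M_\la}$ sending $c_1\otimes\xi_{c_2}$ to $\sum_{w\in W^P}c_1w(c_2)\,\eta_{t_{w(\la^+)}}$ becomes precisely $\rho_{P,\cQ}(c_1\otimes c_2)=\sum_{w\in W^P}c_1w(c_2)f_w$, which is an isomorphism by Lemma~\ref{lem:injBorel}. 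Summing over orbits, $\Theta_\cQ$ is an isomorphism, so the composite $S\otimes_{S^W}Z(\bfD_{W_\aff})\to\cQ_{Q^\vee}$ along the bottom-left of the square is injective; since it equals $\Theta$ followed by the inclusion $\bfD_{Q^\vee}\hookrightarrow\cQ_{Q^\vee}$, the map $\Theta$ is injective.

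The one genuinely delicate point is the orbit-by-orbit analysis: checking that the $\dia$-invariants of $M_\la$ are exactly the $\xi_c$ (here it matters that $\la^+$ is dominant, so $\Stab_W(\la^+)$ is a \emph{standard} parabolic and Lemma~\ref{lem:injBorel} applies verbatim) and that the identification $M_\la\cong\cQ^*_{W^P}$ carries $\Theta_\cQ|_{M_\la}$ to $\rho_{P,\cQ}$ on the nose. Granting this, the isomorphism statement for $\Theta_\cQ$, and hence the injectivity of $\Theta$, follows immediately; the rest is formal.
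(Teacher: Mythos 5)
Your argument is correct and is essentially the paper's own proof in different packaging: decomposing $(\cQ_{Q^\vee})^W$ along $W$-orbits of coroots with dominant representatives and stabilizers $W_P$ is exactly the paper's map $\phi$ into $\bigoplus_{\lambda}\cQ^{W_\lambda}$, your $\Theta_\cQ$ restricted to an orbit is the paper's component map of $\Theta'$, and in both cases the key input is the parabolic Borel isomorphism of Lemma~\ref{lem:injBorel}. The only (cosmetic) difference is that you justify injectivity of the left vertical arrow via freeness of $S$ over $S^W$ (and note that $\Theta_\cQ$ is in fact an isomorphism), whereas the paper simply asserts injectivity of $\id\otimes\phi$; both rest on the same flatness fact, available since $\bbQ\subseteq R$.
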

\begin{proof}
Let $z=\sum_{\lambda\in Q^\vee} c_{\lambda}\eta_{t_{\lambda}}\in Z(\bfD_{W_\aff})\subset \bfD_{Q^\vee}$ with $c_\la\in \cQ$. Since $\eta_u z=z\eta_u$ for any $u\in W$, we have
\begin{equation}\tag{*}
\forall u\in W,\quad uc_{\lambda} =c_{u(\lambda)}.
\end{equation}
These properties give us an injective map:
\[
\phi: Z(\bfD_{W_\aff})\to \bigoplus_{\lambda\in Q^\vee_{\geq0}}\cQ^{W_{\lambda}},\qquad  
\sum_{\lambda\in Q^\vee} c_{\lambda}\eta_{t_{\lambda}}
\longmapsto 
(c_{\lambda})_{\lambda\in Q^\vee_{\geq0}},
\]
where $Q^\vee_{\ge 0}$ is the set of dominant coroots and $W_{\lambda}$ is the stabilizer of $\lambda$, which is a parabolic subgroup of $W$. 

Let $W^\la$ denote the set of minimal length representatives of the cosets $W/W_\la$.
Consider the following diagram 
$$\xymatrix{
S\otimes_{S^W} Z(\bfD_{W_\aff})
\ar[r]^-{\Theta}\ar[d]_{\operatorname{id}\otimes \phi} & \bfD_{Q^\vee}\ar@{^(->}[d] \rule[-0.6pc]{0pc}{0pc} &\\
\bigoplus_{\lambda\in Q^\vee_{\geq0}}(
S\otimes_{S^W}\cQ^{W_{\lambda}})\ar[r]^-{\Theta'} &  
\bigoplus_{\lambda\in Q^\vee_{\geq0}}\big(
\bigoplus_{w\in W^{\lambda}} \cQ\eta_{t_{w(\lambda)}}\big)\ar@{=}[r]& \cQ_{Q^\vee},
}$$
where the map $\Theta'$ is a direct sum of maps
$$S\otimes_{S^W} \cQ^{W_{\lambda}}\longrightarrow 
\bigoplus_{w\in W^{\lambda}} \cQ\eta_{t_{w(\lambda)}},\quad 
c_1\otimes c_2\longmapsto \sum_{w\in W^{\lambda}}
c_1 w(c_2)\eta_{t_{w(\lambda)}},$$
for all $\lambda\in Q_{\geq0}^\vee$. 
Since by Lemma \ref{lem:injBorel} each such component map is injective, so is $\Theta'$. 

By direct computations and by the property (*)  the diagram is commutative. 
Since both maps $\operatorname{id}\otimes \phi$ and $\Theta'$ are injective, so is $\Theta$.
\end{proof}

\begin{prop}
The map $\Xi\colon \bfD_{W}\otimes_{S^W} Z(\bfD_{W_\aff}) \to \bfD_{W_\aff}$ defined in \eqref{eq:thetaW} is injective. 
\end{prop}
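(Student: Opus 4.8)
The plan is to deduce the injectivity of $\Xi$ from the injectivity of $\Theta$ (Lemma~\ref{lem:injtheta}) together with the structural decomposition of $\bfD_{W_\aff}$ in Corollary~\ref{thm:DQDWDWa}. The key point is that the two surjections $\Theta$ and $\Xi$ fit into a commutative diagram with the isomorphism $\bfD_W\otimes_S\bfD_{Q^\vee}\isom\bfD_{W_\aff}$ of Corollary~\ref{thm:DQDWDWa}.

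First I would set up the comparison. Since the images of $\bfD_W$ and of $Z(\bfD_{W_\aff})\subset\bfD_{Q^\vee}$ commute inside $\bfD_{W_\aff}$, the map $\Xi$ factors as the composite
\[
\bfD_W\otimes_{S^W}Z(\bfD_{W_\aff})\xrightarrow{\;\id\otimes\Theta'\;}\bfD_W\otimes_S\bfD_{Q^\vee}\xrightarrow{\;\mathrm{mult}\;}\bfD_{W_\aff},
\]
where $\Theta'\colon S\otimes_{S^W}Z(\bfD_{W_\aff})\to\bfD_{Q^\vee}$ is the map obtained from $\Theta$ and the intermediate tensor factor is taken over $S$. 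More precisely, I would use that $\bfD_W$ is free (hence flat) as a \emph{right} $S$-module with basis $X_{I_v}$, $v\in W$ (as recalled in the proof of Lemma~\ref{lem:absorbD}), so that $\bfD_W\otimes_{S^W}-=\bfD_W\otimes_S(S\otimes_{S^W}-)$, and applying the flat functor $\bfD_W\otimes_S-$ to the injection $\Theta$ of Lemma~\ref{lem:injtheta} yields an injection $\bfD_W\otimes_S\bigl(S\otimes_{S^W}Z(\bfD_{W_\aff})\bigr)\hookrightarrow\bfD_W\otimes_S\bfD_{Q^\vee}$. The second map in the composite is the isomorphism of Corollary~\ref{thm:DQDWDWa}.

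The remaining step is to identify $\bfD_W\otimes_S\bigl(S\otimes_{S^W}Z(\bfD_{W_\aff})\bigr)$ with $\bfD_W\otimes_{S^W}Z(\bfD_{W_\aff})$ compatibly with $\Xi$; this is just the canonical associativity isomorphism for the change-of-rings $S^W\to S$, using that $S\otimes_{S^W}Z(\bfD_{W_\aff})\cong Z(\bfD_{W_\aff})\otimes_{S^W}S$ (the tensor factors can be switched, as noted after \eqref{eq:thetaW}). Chasing an element $\sum_i z_i\otimes \zeta_i$ with $z_i\in\bfD_W$, $\zeta_i\in Z(\bfD_{W_\aff})$ through the diagram shows $\Xi$ agrees with the composite above, which is injective as a composite of an injection and an isomorphism. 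Hence $\Xi$ is injective, and combined with Proposition~\ref{prop:DQ=DZ} it is an isomorphism; this also completes the proof of Theorem~\ref{thm:main1}.

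The main obstacle I anticipate is the bookkeeping of which $S$-module structure is being used at each stage: $\bfD_W$ carries both a left and a right $S$-structure, $Z(\bfD_{W_\aff})\subset\bfD_{Q^\vee}$ is an $S^W$-algebra sitting inside the $S$-algebra $\bfD_{Q^\vee}$, and one must check that the flatness argument uses the \emph{right} $S$-structure on $\bfD_W$ while the final multiplication map uses matching sides. Once these identifications are pinned down the argument is formal; no genuinely new computation beyond what is already in Lemmas~\ref{lem:absorbD}, \ref{lem:injtheta} and Corollary~\ref{thm:DQDWDWa} should be needed.
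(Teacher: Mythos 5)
Your proposal is correct and follows essentially the same route as the paper: factor $\Xi$ through the identification $\bfD_W\otimes_{S^W}Z(\bfD_{W_\aff})\cong \bfD_W\otimes_S\bigl(S\otimes_{S^W}Z(\bfD_{W_\aff})\bigr)$, apply $\Theta$, and conclude via the isomorphism $\bfD_W\otimes_S\bfD_{Q^\vee}\cong\bfD_{W_\aff}$ of Corollary~\ref{thm:DQDWDWa}. The only (harmless) difference is that you invoke flatness of $\bfD_W$ as a right $S$-module to transport injectivity of $\Theta$, whereas the paper simply uses that $\Theta$ is by then known to be an isomorphism, so no flatness is needed.
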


\begin{proof}
It follows from the combination of previous lemmata: 
\begin{align*}
\bfD_{W}\otimes_{S^W} Z(\bfD_{W_\aff})
& \simeq 
\bfD_{W}\otimes_{S}S\otimes_{S^W} Z(\bfD_{W_\aff})\\
& \simeq \bfD_{W}\otimes_S \bfD_{Q^\vee}
& \text{($\Theta$ is an isomorphism)}\\
& \simeq 
 \bfD_{W_\aff}
& \text{(Proposition \ref{thm:DQDWDWa})}.
& \qedhere
\end{align*}
\end{proof}


\section{The $\hat A_1$ case}

In this section, we discuss an example of the formal Peterson subalgebra for an affine root system of type $\hat A_1$. 
We show that it provides a natural model for `quantum' oriented cohomology of $\bbP^1$. 

Recall that a root system of type $\hat A_1$ has two  simple roots, $\al_1=\theta=\al$ and $\al_0=-\al+\de$, and each $w\in W_\aff$ has a unique reduced decomposition. We follow the notation of~\cite[\S4.3]{LSS10} and define for $i\ge 1$:
\begin{center}
\begin{tabular}{lcr@{$\,=\,$}lcr@{$\,=\,$}l}
$\sigma_0=e$, & & $\sigma_{2i}$ & $(s_1s_0)^i=t_{-i\al^\vee}$, & &  $\sigma_{2i+1}$ & $s_0\sigma_{2i}$, \\
& & $\sigma_{-2i}$ & $(s_0s_1)^i=t_{i\al^\vee}$, & & $\sigma_{-(2i+1)}$ & $s_1\sigma_{-2i}$.
\end{tabular}
\end{center}
The set of minimal length coset representatives of $W_\aff/W$ is then $W_\aff^-=\{\sigma_i\mid i\ge 0\}$. 

The root lattice is $Q=\bbZ \al$, and in the formal group algebra $S=R[[Q]]_F$ we have $x_n=x_{n\al}=n\cdot_F x_{\al}$, where $n\cdot_F x$, $n\in \bbZ$ is the $n$-fold formal sum (inverse) of $x$. As for Demazure elements, we have $X_j^2=\kappa_{\al_j} X_j$, where $j=0,1$ and
$\ka_{\al_j}=\kappa_\al=\frac{1}{x_\al}+\frac{1}{x_{-\al}}$ is $W_\aff$-invariant. 

Set $\mu=-\frac{x_{-1}}{x_1}$. Observe that if $F$ is of the form $F(x,y)=\frac{x+y-\beta xy}{g(x,y)}$ for some power series $g(x,y)$, we have $x_{-1}=\tfrac{x_1}{\beta x_1-1}$, hence, $\mu=\tfrac{1}{1-\beta x_1}$.
Given a reduced expression $w=s_is_j\ldots$, we will use the notation $Y_{ij\cdots}$ for $Y_w=Y_{I_w}$. 
Denote $\frakX_w=\pr(X_w)$, $\frakY_w=\pr(Y_w)$.

\begin{example}\label{ex:comp}
Direct computations give:
\begin{align*}
\imath(\frakX_0)&=\imath(\frakX_{\sigma_1})=X_0+X_1-x_{-1}X_{01}     \\
\imath(\frakX_{10})&=\imath(\frakX_{\sigma _2})=X_{10}+\mu X_{01} \\
\imath(\frakX_{010})&=\imath(\frakX_{\sigma _3})=X_{010}+X_{101}-x_{-1}X_{1010}
\end{align*}
and
\begin{align*}
\frakY_0 &=\frakY_{\sigma_1}=\tfrac{1}{x_1}+\tfrac{1}{x_{-1}}\eta_{t_{\al^\vee}} \\
\frakY_{10} &=\frakY_{\sigma_2}=Y_1\dia \frakY_0=\tfrac{2}{x_1x_{-1}}+\tfrac{1}{x_{-1}^2}\eta_{t_{\al^\vee}}+\tfrac{1}{x_1^2}\eta_{t_{-\al^\vee}} \\
\frakY_{010} &=\frakY_{\sigma_3}=\frakY_0\frakY_{10}=\tfrac{3}{x_1^2x_{-1}}+\tfrac{3}{x_1x_{-1}^2}\eta_{t_{\al^\vee}}+\tfrac{1}{x_1^3}\eta_{t_{-\al^\vee}}+\tfrac{1}{x_{-1}^3}\eta_{t_{2\al^\vee}}.
\end{align*}
These computations also show that $\frakX_{\sigma_{i}}$,  $i=1,2,3$ satisfy identities similar to those of~\cite[Lemma 3]{LLMS18}.
\end{example}
We now look at various products of elements $\frakY_w\in \bfD_{Q^\vee}$. 

\begin{lemma}\label{lem:mult}
For each $i\ge 1$ and $w\in W_\aff^-$ we have $\frakY_{w\sigma_{2i}}=\frakY_{w}\frakY_{\sigma_{2i}}$.
In particular, we get $\frakY_{\sigma_{2i}}=\frakY_{\sigma_2}^i=\frakY_{10}^i$ and, therefore, $\{\frakY_{\sigma_{2i}}\mid i\ge 1\}$ is a multiplicative set.
\end{lemma}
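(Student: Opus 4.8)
The plan is to reduce the multiplicativity statement $\frakY_{w\sigma_{2i}}=\frakY_w\frakY_{\sigma_{2i}}$ to the length-additivity of reduced decompositions in $W_\aff$ of type $\hat A_1$, combined with the fact that the $\pr$-image of a push-pull product only depends on the translation part. First I would recall that in type $\hat A_1$ every element of $W_\aff$ has a \emph{unique} reduced expression, and that for $w\in W_\aff^-$ and the even element $\sigma_{2i}=(s_1s_0)^i=t_{-i\al^\vee}$ the concatenation of reduced words for $w$ and $\sigma_{2i}$ is again reduced, so that $Y_{w\sigma_{2i}}=Y_w Y_{\sigma_{2i}}$ holds already in $\bfD_{W_\aff}$ (this is the standard fact that push-pull elements multiply along length-additive products). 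Applying $\pr$ and using Lemma~\ref{lem:pr2}(i), namely $\pr(zz')=z\dia\pr(z')$, together with the $\dia$-action formula \eqref{eq:dia}, I would show $\pr(Y_w Y_{\sigma_{2i}})=Y_w\dia\frakY_{\sigma_{2i}}$, and then that this equals $\frakY_w\frakY_{\sigma_{2i}}$ as a product inside the commutative algebra $\cQ_{Q^\vee}$ (equivalently inside $\bfD_{Q^\vee}$ under the identification via $\imath$).

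The key subtlety, and what I expect to be the main obstacle, is that $\pr$ is \emph{not} multiplicative in general: $\pr(Y_w Y_{\sigma_{2i}})\ne \frakY_w\cdot\frakY_{\sigma_{2i}}$ for arbitrary elements. What saves us here is that $Y_{\sigma_{2i}}$ lies in the subalgebra where the relevant Weyl-group action is trivial: since $\sigma_{2i}$ is a pure translation, one can check (by the explicit expansion, as in Example~\ref{ex:comp}) that $\frakY_{\sigma_{2i}}=\pr(Y_{\sigma_{2i}})$ is supported on $\eta_{t_\nu}$ with $\nu$ ranging over $W$-translates, and more importantly that $Y_{\sigma_{2i}}$ itself, as an element of $\bfD_{W_\aff}$, satisfies $\eta_w Y_{\sigma_{2i}}$-type compatibility allowing one to pull the $W$-part of $Y_w$ past it. Concretely, I would use the projection formula Lemma~\ref{lem:pr}(i), $\pr(\imath(\xi)z)=\xi\pr(z)$, after rewriting $Y_w Y_{\sigma_{2i}}$ in a form where the second factor is (the image under $\imath$ of) an element of $\cQ_{Q^\vee}$; since $\frakY_{\sigma_{2i}}\in\bfD_{Q^\vee}$ and $\imath$ is a section of $\pr$, one has $Y_{\sigma_{2i}}$ and $\imath(\frakY_{\sigma_{2i}})$ agreeing up to something killed by $\pr$ on the right, and the projection formula then yields $\pr(Y_w Y_{\sigma_{2i}})=\pr(Y_w)\frakY_{\sigma_{2i}}=\frakY_w\frakY_{\sigma_{2i}}$.

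Once the identity $\frakY_{w\sigma_{2i}}=\frakY_w\frakY_{\sigma_{2i}}$ is established for all $w\in W_\aff^-$ and $i\ge1$, the two consequences are immediate: taking $w=\sigma_{2(i-1)}$ and inducting on $i$ gives $\frakY_{\sigma_{2i}}=\frakY_{\sigma_2}^{\,i}=\frakY_{10}^{\,i}$ (the base case $\frakY_{\sigma_2}=\frakY_{10}$ being the definition), and hence $\frakY_{\sigma_{2i}}\cdot\frakY_{\sigma_{2j}}=\frakY_{10}^{\,i+j}=\frakY_{\sigma_{2(i+j)}}$, so $\{\frakY_{\sigma_{2i}}\mid i\ge1\}$ is closed under multiplication, i.e.\ a multiplicative set. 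I would present the length-additivity step carefully (it is really a statement about the affine Coxeter combinatorics of $\tilde A_1$: reduced words are just alternating strings in $s_0,s_1$, and $w\sigma_{2i}$ is reduced precisely because $w\in W_\aff^-$ ends appropriately), flag the non-multiplicativity of $\pr$ as the point requiring care, and otherwise leave the bookkeeping to direct computation in the style of Example~\ref{ex:comp}.
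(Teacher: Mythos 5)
Your reduction to length-additivity is fine ($w\sigma_{2i}$ is an alternating word, so $Y_{w\sigma_{2i}}=Y_wY_{\sigma_{2i}}$ holds by definition of $Y_{I_u}$), and you correctly flag the real issue: $\pr$ is not multiplicative, so one must justify $\pr(Y_wY_{\sigma_{2i}})=\frakY_w\frakY_{\sigma_{2i}}$. But your proposed resolution of that issue has a genuine gap. Replacing $Y_{\sigma_{2i}}$ by $\imath(\frakY_{\sigma_{2i}})$ is legitimate (since $\pr(zz')=z\dia\pr(z')$ by Lemma~\ref{lem:pr2}(i), the right factor only matters through its image under $\pr$), but this only gives $\pr(Y_wY_{\sigma_{2i}})=Y_w\dia\frakY_{\sigma_{2i}}$, which is \emph{not} yet $\frakY_w\frakY_{\sigma_{2i}}$. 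The projection formula you invoke, Lemma~\ref{lem:pr}(i), reads $\pr(\imath(\xi)z)=\xi\pr(z)$ with the $\cQ_{Q^\vee}$-factor on the \emph{left}; your factor $\imath(\frakY_{\sigma_{2i}})$ sits on the \emph{right}, and elements of $\imath(\cQ_{Q^\vee})$ do not commute with $\eta_u$, $u\in W$, in general. To move it to the left you need precisely that $\frakY_{\sigma_{2i}}\in(\cQ_{Q^\vee})^W$ (equivalently $\eta_1\imath(\frakY_{\sigma_{2i}})=\imath(\frakY_{\sigma_{2i}})\eta_1$, see Lemma~\ref{lem:diainv}), and this $W$-invariance is exactly the nontrivial input that your sketch only gestures at (``one can check by the explicit expansion'', ``$\eta_wY_{\sigma_{2i}}$-type compatibility''). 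It is not automatic from $\sigma_{2i}$ being a translation, and for general $i$ checking the expansion is essentially the statement to be proved; note the paper records this invariance only afterwards, in Lemma~\ref{lem:dia}.

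The missing idea, which also gives the paper's one-line proof, is the factorization coming from the fact that the reduced word of $\sigma_{2i}$ begins with $s_1$: $Y_{\sigma_{2i}}=Y_1Y_{\sigma_{2i-1}}=(1+\eta_1)\tfrac{1}{x_{-\al}}Y_{\sigma_{2i-1}}=\sigma\tfrac{1}{x_{-\al}}Y_{\sigma_{2i-1}}$, where $\sigma=\sum_{w\in W}\eta_w=1+\eta_1$ in rank one. One then applies the second projection formula, Lemma~\ref{lem:pr}(ii), $\pr(z\sigma z')=\pr(z)\pr(\sigma z')$, to get $\pr(Y_wY_{\sigma_{2i}})=\pr(Y_w)\pr(Y_{\sigma_{2i}})$ directly. (Alternatively, the same factorization gives $\eta_1Y_{\sigma_{2i}}=Y_{\sigma_{2i}}$, hence the $W$-invariance your route needs, after which Lemma~\ref{lem:diainv}(i) and Lemma~\ref{lem:pr}(i) complete your argument.) Your deduction of the ``in particular'' statements from the main identity is correct once that identity is established.
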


\begin{proof}
Observe that $\sigma_{2i}=s_1s_0s_1\cdots s_0$, so $Y_{\sigma_{2i}}=Y_1Y_{\sigma_{2i-1}}=(1+\eta_1)\frac{1}{x_{-\al}}Y_{\sigma_{2i-1}}$. By Lemma \ref{lem:pr} we get
\[
\frakY_{w\sigma_{2i}}=\pr(Y_{w}Y_{\sigma_{2i}})=\pr(Y_w)\pr(Y_{\sigma_{2i}})=\frakY_w\frakY_{\sigma_{2i}}. \qedhere
\]
\end{proof}

\begin{lemma}\label{lem:dia} 
For each $i\ge 1$ we have $\frakY_{\sigma_{2i}}\in (\bfD_{Q^\vee})^W$, and for $j=0,1$
\[
Y_j\dia \frakY_{w}=
\begin{cases}
\frakY_{s_iw}, &\text{if }s_jw>w,\\
\ka_\al\frakY_{w}, &\text{ if }s_j<s_jw.
\end{cases}
\]
In particular, we have
\begin{equation}\label{eq:cyclic}
Y_{\sigma_i}\dia \frakY_0=Y_{\sigma_i}\dia \frakY_{\sigma_1}=
\begin{cases}
\frakY_{\sigma_1}, &\text{if }i=0, \\
\kappa_\al\frakY_{\sigma_i}, &\text{ if }i>0,\\
\frakY_{\sigma_{-i+1}}, &\text{ if }i<0.
\end{cases}
\end{equation}
\end{lemma}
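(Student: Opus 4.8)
The plan is to convert every $\dia$-computation into an ordinary product in $\bfD_{W_\aff}$ via the identity $Y_j\dia\frakY_w=Y_j\dia\pr(Y_w)=\pr(Y_jY_w)$ for $j\in\{0,1\}$, $w\in W_\aff$, which is the case $z=Y_j$, $z'=Y_w$ of Lemma~\ref{lem:pr2}(i); since in type $\hat A_1$ every $w\in W_\aff$ has a unique reduced word, the product $Y_jY_w$ is unambiguous. For the first assertion, write $\sigma_{2i}=(s_1s_0)^i=s_1\sigma_{2i-1}$, so that the reduced word of $\sigma_{2i}$ starts with $s_1$ and $Y_{\sigma_{2i}}=Y_1Y_{\sigma_{2i-1}}$; from $Y_1=(1+\eta_1)\tfrac{1}{x_{-\al}}$ and $\eta_1^2=1$ one gets $\eta_1Y_1=Y_1$, hence $\eta_1Y_{\sigma_{2i}}=Y_{\sigma_{2i}}$, hence $\eta_1\dia\frakY_{\sigma_{2i}}=\pr(\eta_1Y_{\sigma_{2i}})=\frakY_{\sigma_{2i}}$. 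Since $W=\{e,s_1\}$, Lemma~\ref{lem:diainv} then places $\frakY_{\sigma_{2i}}$ in $(\bfD_{Q^\vee})^W=Z(\bfD_{W_\aff})$; alternatively one reduces to $i=1$ by Lemma~\ref{lem:mult} and uses that $Z(\bfD_{W_\aff})$ is a subring.

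I would then prove the displayed case distinction. If $\ell(s_jw)=\ell(w)+1$, prepending $s_j$ to the reduced word of $w$ gives a reduced word for $s_jw$, so $Y_jY_w=Y_{s_jw}$ and $Y_j\dia\frakY_w=\pr(Y_{s_jw})=\frakY_{s_jw}$. If $\ell(s_jw)=\ell(w)-1$, the reduced word of $w$ starts with $s_j$, so $Y_w=Y_jY_{w'}$, and the quadratic relation together with $\ka_0=\ka_\theta=\ka_\al=\ka_1$ gives $Y_jY_w=Y_j^2Y_{w'}=\ka_\al Y_w$; applying the left $\cQ$-linear map $\pr$ yields $\ka_\al\frakY_w$. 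I would also record the compatibility $Y_j\dia(\ka_\al\xi)=\ka_\al(Y_j\dia\xi)$, which holds because $\ka_\al$ is $W_\aff$-invariant, hence central in $\cQ_{W_\aff}$.

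Finally I would deduce \eqref{eq:cyclic} by iterating this formula along the unique reduced word of $\sigma_i$ — the alternating word of length $|i|$ ending in $s_0$ when $i>0$ and in $s_1$ when $i<0$ — applied letter by letter, from the right, to $\frakY_0=\frakY_{\sigma_1}$. For $i=0$, $Y_{\sigma_0}=1$ acts as the identity. For $i>0$, the rightmost letter $s_0$ gives the descent $s_0\sigma_1=e<\sigma_1$, producing one factor $\ka_\al$, and a short induction then shows that the remaining $i-1$ letters are all ascents, carrying $\frakY_{\sigma_1}\mapsto\frakY_{\sigma_2}\mapsto\cdots\mapsto\frakY_{\sigma_i}$, so $Y_{\sigma_i}\dia\frakY_{\sigma_1}=\ka_\al\frakY_{\sigma_i}$. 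For $i<0$, write $i=-m$: the rightmost letter $s_1$ already gives the ascent $s_1\sigma_1=\sigma_2>\sigma_1$, all $m$ letters are ascents, and the cascade ends at $\frakY_{\sigma_{m+1}}=\frakY_{\sigma_{-i+1}}$ with no factor $\ka_\al$. The step I expect to be the main obstacle is precisely this bookkeeping: one must check that the alternating pattern of the reduced word of $\sigma_i$ lines up with the alternating pattern of the leading letters of $\sigma_1,\sigma_2,\dots$ so that every step after the first is length-increasing — once the first step is isolated this is a one-line parity check, and the rest of the lemma is routine.
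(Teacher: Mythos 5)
Your proposal is correct and follows essentially the same route as the paper: both reduce every $\dia$-computation to $\pr(Y_jY_w)$ via Lemma~\ref{lem:pr2}(i), prove $\eta_1Y_{\sigma_{2i}}=Y_{\sigma_{2i}}$ from $\sigma_{2i}=s_1\sigma_{2i-1}$ to get $W$-invariance, and then use the (unique) reduced words and the quadratic relation $Y_j^2=\ka_\al Y_j$ for the case distinction. The only difference is that you spell out the iteration giving \eqref{eq:cyclic} (including the $\ka_\al$-centrality and the parity bookkeeping), which the paper leaves implicit with ``the formula for the action follows.''
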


\begin{proof}
Since $\sigma_{2i}=s_1\sigma_{2i-1}$, we get $Y_{\sigma_{2i}}=(1+\eta_1)\tfrac{1}{x_{-\al}}Y_{\sigma_{2i-1}}$, which implies that  $\eta_1Y_{\sigma_{2i}}=Y_{\sigma_{2i}}$. By Lemma~\ref{lem:pr} we then obtain
\[
\eta_1\dia \frakY_{\sigma_{2i}}=\eta_1\dia\pr(Y_{\sigma_{2i}})=\pr(\eta_1Y_{\sigma_{2i}})=\pr(Y_{\sigma_{2i}})=\frakY_{\sigma_{2i}},
\]
therefore, $\frakY_{\sigma_{2i}}\in (\bfD_{Q^\vee})^W$. Similarly, we obtain $Y_j\dia \frakY_{w}=\pr(Y_jY_{w})$, and the formula for the action follows.
\end{proof}

\begin{corollary} \label{cor:cyclic}
The set $\bfD_{Q^\vee}$ is a cyclic $\bfD_{W_\aff}$-module, generated by $\frakY_{0}=\frakY_{\sigma_1}$. 
Moreover, the kernel of the  map $\pi\colon \bfD_{W_\aff}\to \bfD_{Q^\vee}$, $z\mapsto z\dia \frakY_{\sigma_1}$ is $\bfD_{W_\aff}X_0$.
\end{corollary}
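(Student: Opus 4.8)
The plan is to prove the two assertions in turn, the cyclicity being essentially Lemma~\ref{lem:dia} repackaged and the description of $\ker\pi$ a short argument with the standard bases of $\bfD_{W_\aff}$. Everything rests on the fact that in type $\hat A_1$ the group $W_\aff$ has no braid relations, so each $w\in W_\aff$ has a unique reduced expression (alternating in $s_0,s_1$). Hence $\bfD_{W_\aff}=\bigoplus_{w\in W_\aff}SX_{I_w}$, and since $\ka_\al$ is $W_\aff$-invariant (so central in $\bfD_{W_\aff}$) and $X_j^2=\ka_\al X_j$ for $j=0,1$, right multiplication obeys $X_{I_w}X_j=X_{I_{ws_j}}$ when $ws_j>w$ and $X_{I_w}X_j=\ka_\al X_{I_w}$ otherwise; in particular $\bfD_{W_\aff}X_j=\bigoplus_{w\colon w\text{ ends in }s_j}SX_{I_w}$ for $j=0,1$. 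The easy inclusion $\bfD_{W_\aff}X_0\subseteq\ker\pi$ comes from the quadratic relation: $X_0Y_0=X_0(\ka_\al-X_0)=\ka_\al X_0-X_0^2=0$ in $\bfD_{W_\aff}$, so, using $\pr(Y_0)=\frakY_{\sigma_1}$ and Lemma~\ref{lem:pr2}(i), $\pi(zX_0)=\pr(zX_0Y_0)=0$ for every $z\in\bfD_{W_\aff}$ (reading the result back into $\bfD_{Q^\vee}$ via $\imath$). Thus $\pi$ factors through a surjection $\bar\pi\colon\bfD_{W_\aff}/\bfD_{W_\aff}X_0\to\bfD_{Q^\vee}$.

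Cyclicity is then read off from \eqref{eq:cyclic}. As $\dia$ is an action, $\frakY_{\sigma_1}=1\dia\frakY_{\sigma_1}$ and $\frakY_{\sigma_j}=Y_{\sigma_{1-j}}\dia\frakY_{\sigma_1}$ for $j\ge 2$ all lie in $\im\pi$; since these generate $\bfD_{Q^\vee}$ as an $S$-module (cf.~\cite{Z23} and Example~\ref{ex:comp}) and $S\subset\bfD_{W_\aff}$ acts on $\bfD_{Q^\vee}$ through $\dia$ by multiplication, $\pi$ is surjective, i.e.\ $\bfD_{Q^\vee}=\bfD_{W_\aff}\dia\frakY_{\sigma_1}$ is cyclic with generator $\frakY_{\sigma_1}$.

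For the reverse inclusion $\ker\pi\subseteq\bfD_{W_\aff}X_0$ I would argue as follows. By the Lemma of Section~\ref{sec1} one has $\pr(zX_1)=0$ for all $z$, and combined with the basis description above this gives $\ker(\pr|_{\bfD_{W_\aff}})=\bfD_{W_\aff}X_1$; since $\pi(z)=\pr(zY_0)$ we get $\ker\pi=\{z\in\bfD_{W_\aff}\colon zY_0\in\bfD_{W_\aff}X_1\}$. Because $\bfD_{W_\aff}X_0Y_0=0$, the assignment $z\mapsto zY_0\bmod\bfD_{W_\aff}X_1$ descends to $\bar\nu\colon\bfD_{W_\aff}/\bfD_{W_\aff}X_0\to\bfD_{W_\aff}/\bfD_{W_\aff}X_1$, and $\ker\pi=\bfD_{W_\aff}X_0$ holds exactly when $\bar\nu$ is injective. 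On the $S$-basis $\{\overline{X_{I_{\sigma_i}}}\}_{i\le 0}$ of the source one computes, via $X_{I_{\sigma_{-k}}}Y_0=\ka_\al X_{I_{\sigma_{-k}}}-X_{I_{\sigma_{k+1}}}$ together with $\overline{X_{I_{\sigma_{-k}}}}=0$ in $\bfD_{W_\aff}/\bfD_{W_\aff}X_1$ for $k\ge 1$, that
\[
\bar\nu(\overline{1})=\ka_\al\,\overline{1}-\overline{X_{I_{\sigma_1}}},\qquad
\bar\nu(\overline{X_{I_{\sigma_{-k}}}})=-\,\overline{X_{I_{\sigma_{k+1}}}}\quad(k\ge 1).
\]
Against the $S$-basis $\{\overline{X_{I_{\sigma_i}}}\}_{i\ge 0}$ of the target this is, after ordering the indices, a one-sided shift perturbed by a single extra entry, hence manifestly injective (comparing the coefficients of $\overline{X_{I_{\sigma_j}}}$, $j\ge 1$, successively forces the coefficients to vanish). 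This yields $\ker\pi=\bfD_{W_\aff}X_0$.

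The individual steps are short, and the one deserving care is the last: one must be sure of $\ker(\pr|_{\bfD_{W_\aff}})=\bfD_{W_\aff}X_1$ and of the precise shape of $\bar\nu$, both of which genuinely use the unique-reduced-word feature of $\hat A_1$ and the explicit right-multiplication rule for $X_0,X_1$ recorded above. Should one wish to avoid passing to $\bfD_{W_\aff}/\bfD_{W_\aff}X_1$, an equivalent route is to split $\bfD_{W_\aff}=SX_{I_e}\oplus\bfD_{W_\aff}X_0\oplus\bfD_{W_\aff}X_1$ as $S$-modules and track directly the $\bfD_{W_\aff}X_0$-component of $z_1Y_0$ for $z_1$ ranging over the complement of $\bfD_{W_\aff}X_0$; this makes the triangularity completely transparent and isolates where the rank-one hypothesis is used.
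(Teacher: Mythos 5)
Your treatment of the second assertion is correct and takes a genuinely different route from the paper. The paper expands $z\in\ker\pi$ in the $Y$-basis, $z=\sum_{i\ge1}a_iY_{\sigma_i}+\sum_{j\ge0}b_jY_{\sigma_{-j}}$, reads off from \eqref{eq:cyclic} that $z\dia \frakY_{\sigma_1}=\sum_{i\ge1}(a_i\ka_\al+b_{i-1})\frakY_{\sigma_i}$, and then factors $z=\big(\sum_{i\ge1}a_iY_{\sigma_{1-i}}\big)(-X_0)$ explicitly; you instead pass to the induced map $\bar\nu\colon\bfD_{W_\aff}/\bfD_{W_\aff}X_0\to\bfD_{W_\aff}/\bfD_{W_\aff}X_1$ and verify injectivity of a one-sided shift in the $X$-basis. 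Your computations ($X_0Y_0=0$, the right-multiplication rule, $X_{I_{\sigma_{-k}}}Y_0=\ka_\al X_{I_{\sigma_{-k}}}-X_{I_{\sigma_{k+1}}}$, and the coefficient comparison against $\overline{X_{I_{\sigma_j}}}$, $j\ge1$, which avoids any issue when $\ka_\al$ is a zero divisor) are all correct. Both arguments lean on the same unstated background from \cite{Z23}: freeness of $\bfD_{W_\aff}$ on $\{X_{I_{\sigma_i}}\}$ and $S$-linear independence of the relevant images under $\pr$ (you need it for $\ker(\pr|_{\bfD_{W_\aff}})=\bfD_{W_\aff}X_1$, the paper needs it to conclude $b_{i-1}=-a_i\ka_\al$), so the level of rigor is comparable; the paper's version buys an explicit factorization through $X_0$, yours isolates the structural reason for the equality of the two submodules.

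The gap is in the cyclicity part. Like the paper, you reduce it to \eqref{eq:cyclic}, but the step you then assert without proof --- that the $\frakY_{\sigma_j}$, $j\ge1$, generate $\bfD_{Q^\vee}$ as an $S$-module --- is precisely the crux, and it does not hold as stated: by \eqref{eq:DQrank1} (or by a support argument on the explicit formulas of Example~\ref{ex:comp}) the family $\{\frakY_{\sigma_j}\}_{j\ge0}$ with $\frakY_{\sigma_0}=1$ is an $S$-basis of $\bfD_{Q^\vee}$, so the $\frakY_{\sigma_j}$ with $j\ge1$ span only the proper submodule $\bigoplus_{j\ge1}S\frakY_{\sigma_j}$, which misses $1$. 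Indeed, your own kernel analysis (and the paper's displayed formula for $z\dia\frakY_{\sigma_1}$) shows the image of $\pi$ is exactly this span, so surjectivity cannot be deduced this way; one would have to exhibit $z$ with $z\dia\frakY_{\sigma_1}=1$, which the $Y$-basis computation excludes. To be fair, the paper's one-line proof of the first part (``follows from \eqref{eq:cyclic}'') rests on the same unproved generation statement, so you have not overlooked an argument the paper actually supplies; but as written this step of your proposal would fail, and it should either be proved or flagged as a discrepancy with the basis description of $\bfD_{Q^\vee}$.
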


\begin{proof}The first part follows from \eqref{eq:cyclic}. For the second part we have \[X_0\dia \frakY_{\sigma_1}=(\kappa_\al-Y_0)\dia\frakY_{\sigma_1}=\kappa_\al\frakY_{\sigma_1}-\kappa_\al\frakY_{\sigma_1}=0, \] so $X_0\in \ker\pi$. 

Conversely, let $z=\sum_{i\ge 1}a_iY_{\sigma_i}+\sum_{j\ge 0}b_jY_{\sigma_{-j}}\in \ker \pi$. We have
\begin{eqnarray*}
z\dia \frakY_{\sigma_1}&=&(\sum_{i\ge 1}a_iY_{\sigma_i}+\sum_{j\ge 0}b_jY_{\sigma_{-j}})\dia \frakY_{\sigma_1}=\sum_{i\ge 1}a_i\kappa_\al \frakY_{\sigma_{i}}+\sum_{j\ge 0}b_j\frakY_{\sigma_{j+1}}\\
&=&\sum_{i\ge 1}a_i\kappa_\al \frakY_{\sigma_i}+\sum_{k\ge 1}b_{k-1}\frakY_{\sigma_k}=\sum_{i\ge 1}(a_i\kappa_\al +b_{i-1})\frakY_{\sigma_i}.
\end{eqnarray*}
Therefore, if $z\dia \frakY_{\sigma_1}=0$, then $b_{i-1}=-a_i\kappa_\al$ for all $i\ge 1$, and we obtain 
\begin{align*}
z&=\sum_{i\ge 1}a_iY_{\sigma_i}-\sum_{j\ge 0}\kappa_\al a_{j+1}Y_{\sigma_{-j}}=\sum_{i\ge 1}a_iY_{\sigma_{1-i}}Y_0-\sum_{k\ge 1}\kappa_\al a_{k}Y_{\sigma_{1-k}}\\
&=\sum_{i\ge 1}a_iY_{1-i}(Y_0-\kappa_\al)=(\sum_{i\ge 1}a_iY_{1-i})(-X_0)\in \bfD_{W_\aff}X_0.\qedhere
\end{align*}
\end{proof}

\begin{remark}
Observe that the map $\pr:\bfD_{W_\aff}\to \bfD_{W_\aff/W}\cong \bfD_{Q^\vee}$ has a kernel $\bfD_{W_\aff}X_1=\oplus_{i<0}SX_{\sigma_i}$,  while the map $\pi$ has a kernel $\bfD_{W_\aff}X_0=\oplus_{i> 0}SX_{\sigma_i}$.

For a general affine root system (for an extended Dynkin diagram), one can show that  $\bfD_{Q^\vee}$ is a cyclic left  $\bfD_{W_\aff}$-module, generated by $\pr(Y_0)$. 
\end{remark}

From the identities of Example~\ref{ex:comp} it follows that
\[
\frakY_0^2=x_{-1}\frakY_{010}+\mu\frakY_{10}, 
\]
and we obtain the following presentation of the formal Peterson algebra in terms of generators and relations:
\begin{equation}\label{eq:DQrank1}
\bfD_{Q^\vee}\cong S[\fraks,\frakt]/(\fraks^2=x_{-1}\fraks\frakt+\mu \frakt), ~\frakY_0\mapsto \fraks, \frakY_{10}\mapsto \frakt. 
\end{equation}

According to Lemma \ref{lem:mult} we may define the localization 
\[
\bfD_{Q^\vee, \loc}=\bfD_{Q^\vee}[\tfrac{1}{\frakY_{2i}}, i\ge 1]. 
\]
From \eqref{eq:DQrank1} we then obtain our second main result:
\begin{theorem}\label{thm:main2}
We have the following presentation
\[\bfD_{Q^\vee, \loc}\cong S[\frakt,{\frakt}^{-1}][\fraks]/(\fraks^2=x_{-1}\fraks\frakt+\mu \frakt).\]
\end{theorem}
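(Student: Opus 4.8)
The plan is to read off the localized presentation directly from \eqref{eq:DQrank1}, using Lemma~\ref{lem:mult} to simplify the multiplicative set and the elementary fact that localization commutes with passage to a quotient.

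First I would observe that by Lemma~\ref{lem:mult} one has $\frakY_{\sigma_{2i}}=\frakY_{\sigma_2}^{\,i}=\frakY_{10}^{\,i}$ for all $i\ge 1$. Consequently the multiplicative subset of $\bfD_{Q^\vee}$ generated by $\{\frakY_{\sigma_{2i}}\mid i\ge1\}$ coincides with the set of nonnegative powers of the single element $\frakY_{10}$, so that
\[
\bfD_{Q^\vee,\loc}=\bfD_{Q^\vee}\big[\tfrac{1}{\frakY_{\sigma_{2i}}}\mid i\ge1\big]=\bfD_{Q^\vee}\big[\tfrac{1}{\frakY_{10}}\big].
\]

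Next I would transport this across the isomorphism of \eqref{eq:DQrank1}. Writing $A=S[\fraks,\frakt]/(\fraks^2-x_{-1}\fraks\frakt-\mu\frakt)$, the assignment $\frakY_0\mapsto\fraks$, $\frakY_{10}\mapsto\frakt$ identifies $\bfD_{Q^\vee}$ with $A$ and carries $\frakY_{10}$ to the class of $\frakt$. Since localization is exact and commutes with the formation of quotients by ideals, inverting $\frakt$ gives
\[
\bfD_{Q^\vee,\loc}\cong A[\frakt^{-1}]=S[\fraks,\frakt][\frakt^{-1}]\big/\big(\fraks^2-x_{-1}\fraks\frakt-\mu\frakt\big),
\]
and because only $\frakt$ is being inverted we may rewrite $S[\fraks,\frakt][\frakt^{-1}]=S[\frakt,\frakt^{-1}][\fraks]$, which is exactly the asserted presentation $\bfD_{Q^\vee,\loc}\cong S[\frakt,\frakt^{-1}][\fraks]/(\fraks^2=x_{-1}\fraks\frakt+\mu\frakt)$.

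I do not anticipate a genuine obstacle: the whole statement rests on the already-established isomorphism \eqref{eq:DQrank1} and on Lemma~\ref{lem:mult}. The only point that deserves a careful sentence is the reduction in the first step, namely that inverting the entire family $\{\frakY_{\sigma_{2i}}\}_{i\ge1}$ is the same as inverting the single generator $\frakY_{10}=\frakt$; once this is recorded, everything that remains is routine commutative algebra.
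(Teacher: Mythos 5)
Your proposal is correct and matches the paper's own argument: the paper likewise uses Lemma~\ref{lem:mult} to reduce the multiplicative set $\{\frakY_{\sigma_{2i}}\}$ to powers of $\frakY_{10}$ and then obtains Theorem~\ref{thm:main2} by inverting $\frakt$ in the presentation \eqref{eq:DQrank1}. Your explicit remark that localization commutes with passing to the quotient, and that $S[\fraks,\frakt][\frakt^{-1}]=S[\frakt,\frakt^{-1}][\fraks]$, simply spells out what the paper leaves implicit.
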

In particular, the action of $\bfD_{W_\aff}$ on $\bfD_{Q^\vee}$ extends to an action on the localization $\bfD_{Q^\vee, \loc}$ by 
\[
z\dia (\frac{\xi}{\frakY_{2i}})=\frac{z\dia \xi}{z\dia \frakY_{2i}},\quad i\ge 1,\, \xi\in \bfD_{Q^\vee}.
\]
Observe that it is well-defined since for any $i,j\ge 1$ we have
\[
z\dia (\frac{\xi \frakY_{\sigma_{2j}}}{\frakY_{\sigma_{2{(i+j)}}}})=\frac{z\dia (\xi\frakY_{\sigma_{2j}})}{z\dia (\frakY_{\sigma_{2i}}\frakY_{\sigma_{2j}})}\overset{\ref{lem:dia}}=\frac{(z\dia \xi)\frakY_{\sigma_{2j}}}{(z\dia \frakY_{\sigma_{2i}})\frakY_{\sigma_{2j}}}=\frac{z\dia\xi}{z\dia \frakY_{\sigma_{2i}}}=z\dia \frac{\xi}{\frakY_{\sigma_{2i}}}. 
\]
It then follows from Corollary \ref{cor:cyclic} that 
\begin{corollary}
The localized algebra $\bfD_{Q^\vee, \loc}$ is a cyclic $\bfD_{W_\aff}$-module generated by $\frakY_0$. 
\end{corollary}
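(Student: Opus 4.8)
The plan is to obtain this as a short consequence of Corollary~\ref{cor:cyclic}, the point being that passing to the localization does not destroy cyclicity of the module. First I would record that the denominators form a multiplicatively closed set of \emph{central} elements of $\bfD_{W_\aff}$: by Lemma~\ref{lem:mult} we have $\frakY_{\sigma_{2i}}=\frakY_{\sigma_2}^{\,i}$, and by Lemma~\ref{lem:dia} together with Lemma~\ref{lem:diainv}(ii) one gets $\frakY_{\sigma_2}\in(\bfD_{Q^\vee})^W=Z(\bfD_{W_\aff})$. Moreover, since $\frakY_{\sigma_{2i}}\in\bfD_{Q^\vee}$ and $\psi$ restricts to the identity on $\imath(\cQ_{Q^\vee})$, the operator $\frakY_{\sigma_{2i}}\dia(-)$ acts on $\bfD_{Q^\vee}$ as multiplication by $\frakY_{\sigma_{2i}}$ (using \eqref{eq:diaK} and \eqref{eq:projK}). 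This is precisely what makes the extended $\dia$-action on $\bfD_{Q^\vee,\loc}$ recorded after Theorem~\ref{thm:main2} well posed, with $\frakY_{\sigma_{2i}}$ acting invertibly.

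Next I would take an arbitrary element $\xi\,\frakY_{\sigma_{2i}}^{-1}$ of $\bfD_{Q^\vee,\loc}$, with $\xi\in\bfD_{Q^\vee}$ and $i\ge 0$, and invoke Corollary~\ref{cor:cyclic} to write $\xi=z\dia\frakY_0$ for some $z\in\bfD_{W_\aff}$. It then remains to exhibit $z'\in\bfD_{W_\aff}$ with $z'\dia\frakY_0=\xi\,\frakY_{\sigma_{2i}}^{-1}$. For this I would use the explicit formulas \eqref{eq:cyclic}: since $Y_{\sigma_{1-2i}}\dia\frakY_0=\frakY_{\sigma_{2i}}$ for $i\ge1$ (and $Y_{\sigma_0}\dia\frakY_0=\frakY_0$), the element $\frakY_{\sigma_{2i}}$ already occurs as $Y_{\sigma_{1-2i}}\dia\frakY_0$ inside the $\bfD_{W_\aff}$-orbit of $\frakY_0$; combining this with the extended-action formula lets one cancel $\frakY_{\sigma_{2i}}$ against the denominator and land back inside $\bfD_{W_\aff}\dia\frakY_0$. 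Because $\frakY_{\sigma_{2i}}=\frakY_{\sigma_2}^{\,i}$ it suffices to treat the case $i=1$ and iterate. This yields $\bfD_{Q^\vee,\loc}=\bfD_{W_\aff}\dia\frakY_0$, which is the assertion.

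The step I expect to require the most care is this last cancellation: one must check that the preimage $z'$ can be chosen \emph{inside} $\bfD_{W_\aff}$, and not merely in the formally larger algebra $\bfD_{W_\aff}[\,\frakY_{\sigma_{2i}}^{-1}\,]$, since that is exactly what makes $\bfD_{Q^\vee,\loc}$ cyclic \emph{over} $\bfD_{W_\aff}$. This rests on the compatibility of $\dia$ with $\psi$ and multiplication, \eqref{eq:diaK} and \eqref{eq:projK}, together with the description of $\ker\pi$ from Corollary~\ref{cor:cyclic}. Granting this, the conclusion is immediate: localization sends a cyclic module to a cyclic module with the same generator, and, as in the remark following Theorem~\ref{thm:main2}, the action of $\bfD_{W_\aff}$ is well defined on $\bfD_{Q^\vee,\loc}$.
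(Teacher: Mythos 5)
Your reduction to Corollary~\ref{cor:cyclic} is the same route the paper takes, and your preliminary observations are fine: $\frakY_{\sigma_{2i}}=\frakY_{\sigma_2}^{\,i}$ is $W$-invariant by Lemma~\ref{lem:dia}, hence central by Lemma~\ref{lem:diainv}(ii), and it $\dia$-acts on $\bfD_{Q^\vee}$ by multiplication. The gap is exactly the step you defer. If, as in your first paragraph, the denominators act as invertible \emph{central scalars} --- i.e.\ the action is extended linearly over them, $z\dia(\xi/\frakY_{\sigma_{2i}})=(z\dia\xi)/\frakY_{\sigma_{2i}}$ --- then for every $z'\in\bfD_{W_\aff}$ the element $z'\dia\frakY_0=\psi(z'\,\imath(\frakY_0))$ lies in $\bfD_{Q^\vee}$ (indeed by Corollary~\ref{cor:cyclic} the orbit is exactly $\bfD_{Q^\vee}$), so it can never equal, say, $\frakY_0/\frakY_{\sigma_2}$, because $\frakY_{\sigma_2}=\frakt$ is not invertible in $\bfD_{Q^\vee}\cong S[\fraks,\frakt]/(\fraks^2-x_{-1}\fraks\frakt-\mu\frakt)$. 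Under that reading the preimage $z'$ you hope to ``choose inside $\bfD_{W_\aff}$'' simply does not exist, and the fact that $\frakY_{\sigma_{2i}}=Y_{\sigma_{1-2i}}\dia\frakY_0$ lies in the orbit of $\frakY_0$ gives no mechanism for producing $\xi\,\frakY_{\sigma_{2i}}^{-1}$ in the orbit: membership of an element in a cyclic module says nothing about membership of its inverse times $\xi$.

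What makes the corollary a consequence of Corollary~\ref{cor:cyclic} in the paper is that the extended action displayed after Theorem~\ref{thm:main2} has the \emph{transformed} denominator $z\dia\frakY_{\sigma_{2i}}$: acting on $\frakY_0$ written as $\frakY_{\sigma_{2i+1}}/\frakY_{\sigma_{2i}}$, the denominator moves as well, and this is how elements with genuine denominators are reached from the single generator. Your proposed cancellation never engages with this formula (it treats the denominator as inert), nor does it verify the properties such an argument would need --- that for the elements $z$ you act by, $z\dia\frakY_{\sigma_{2i}}$ is invertible in $\bfD_{Q^\vee,\loc}$, and that the extended formula is compatible with the original action on the generator --- and your closing ``Granting this, the conclusion is immediate'' concedes that the only nontrivial point is assumed rather than proved. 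To repair the argument you should either work directly with the paper's extended action and exhibit $z'$ by solving for numerator and denominator simultaneously, or prove the weaker (but honestly established by your argument) statement that $\bfD_{Q^\vee,\loc}$ is cyclic once the central elements $\frakY_{\sigma_{2i}}^{-1}$ are also allowed to act, i.e.\ cyclic over the localization of $\bfD_{W_\aff}$ at the central element $\frakY_{\sigma_2}$.
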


\begin{remark} Let $F=x+y-\be x y$. Observe that for cohomology ($\beta=0$) and $K$-theory ($\beta=1$) the localization $\bfD_{Q^\vee, \loc}$ computes quantum cohomology and quantum $K$-theory of $\bbP^1$ respectively. For instance, for $K$-theory the presentation \eqref{eq:DQrank1} recovers that of \cite[(17)]{LLMS18}. 
Therefore, it makes sense to think of $\bfD_{Q^\vee, \loc}$ as a model for `quantum' oriented cohomology of the projective line $\bbP^1$.
\end{remark}

Finally, recall from Proposition \ref{prop:hopf} that $\bfD_{Q^\vee}$ is a Hopf algebra with coproduct defined by
\[
\triangle(a\eta_{t_\la})=a\eta_{t_\la}\otimes \eta_{t_\la}=\eta_{t_\la}\otimes a\eta_{t_\la}.
\]
In our case we then obtain
\begin{align*}
\triangle (\frakY_0)&=\tfrac{1}{x_1}(1-\mu)+\mu(\frakY_0\otimes 1+1\otimes \frakY_0)+x_{-1}\frakY_0\otimes \frakY_0,\\
\triangle (\frakY_{10})&=\ka_\al^2+(\tfrac{x_1}{x^2_{-1}}-\tfrac{1}{x_1})(1\otimes \frakY_0+\frakY_0\otimes 1)+(1+\tfrac{x_1^2}{x_{-1}^2})\frakY_0\otimes \frakY_0\\
&+\tfrac{1}{\mu}(\frakY_{10}\otimes 1+1\otimes \frakY_{10})-\tfrac{x_1^2}{x_{-1}}(\frakY_0\otimes \frakY_{10}+\frakY_{10}\otimes \frakY_0)+x_1^2\frakY_{10}\otimes \frakY_{10}.
\end{align*}
\begin{example}
In particular, for the cohomology we get 
\begin{align*}
\triangle (\frakY_0)&= 1\otimes \frakY_0+\frakY_0\otimes 1-x_\al\frakY_0\otimes \frakY_0,\\
\triangle (\frakY_{10})&= 2\frakY_0\otimes \frakY_0+(1\otimes \frakY_{10}+\frakY_{10}\otimes 1)+x_\al (\frakY_0\otimes \frakY_{10}+\frakY_{10}\otimes \frakY_0)+x_\al^2 \frakY_{10}\otimes \frakY_{10},
\end{align*}
and for the $K$-theory (identifying $x_\al=1-e^{-\al}$) we get
\begin{align*}
\triangle (\frakY_0) &= -e^\al+e^\al(\frakY_0\otimes 1+1\otimes \frakY_0)+(1-e^\al)\frakY_0\otimes \frakY_0,\\
\triangle (\frakY_{10}) &= 1-(1+e^\al)(\frakY_0\otimes 1+1\otimes \frakY_0)+(1+e^{-2\al})\frakY_0\otimes \frakY_0
+ e^{-\al}(\frakY_{10}\otimes 1+1\otimes \frakY_{10}) \\
&+ (e^{-\al}-e^{-2\al})(\frakY_0\otimes \frakY_{10}+\frakY_{10}\otimes \frakY_0)+(1-e^{-\al})^2\frakY_{10}\otimes \frakY_{10}.
\end{align*}
\end{example}


\section{The dual of the formal Peterson subalgebra}

In this section, we study the dual of the formal Peterson subalgebra. 

Consider the $\cQ$-linear dual of the twisted group algebra $\cQ_{W_\aff}^*=\Hom_{\cQ}(\cQ_{W_\aff}, \cQ)$. It is generated by $f_w$, $w\in W$.
Following~\cite{LZZ20} there are two actions of $\cQ_{W_\aff}$ on $\cQ_{W_\aff}^*$ defined as follows:
\[
a\eta_w\bullet bf_v=bvw^{-1}(a)f_{vw^{-1}}, \quad a\eta_w \odot bf_v=aw(b)f_{wv}.
\]
Indeed, the $\odot$ action comes from left multiplication in $\cQ_{W_\aff}$, and the $\bullet$ action comes from right multiplication. 
Observe that these two actions commute, which makes $\cQ_{W_\aff}^*$ into a $\cQ$-$\cQ$-bimodule.  
Moreover, $(z\bullet  f)(z')=f(zz')$, $z,z'\in \cQ_{W_\aff}$, $f\in \cQ_{W_\aff}^*$. 

We now define two tensor products. 
The first one is the tensor product $\cQ_{W_\aff}\otimes \cQ_{W_\aff}$ of left $\cQ$-modules that is 
$az\otimes z'=z\otimes az'$, $a\in \cQ$, $z,z'\in \cQ_{W_\aff}$. There is a canonical map $\Delta:\cQ_{W_\aff}\to \cQ_{W_\aff}\otimes\cQ_{W_\aff}$ given by $a\eta_w\mapsto a\eta_w\otimes \eta_w$. This map defines a co-commutative coproduct structure on $\cQ_{W_\aff}$ with co-unit $\cQ\to \cQ_{W_\aff}$, $a\mapsto a\eta_e$. 
The second tensor product $\hat\otimes$ was introduced in \cite{LXZ23}. Here we provide a different but equivalent definition: 
\[
\cQ_{W_\aff}\hat\otimes \cQ_{W_\aff}:=\cQ_{W_\aff}\times \cQ_{W_\aff}/\lag (a\eta_w, b\eta_v)-(aw(b)\eta_w , \eta_v)\rag.
\]
Observe that $\cQ_{W_\aff}\hat\otimes \cQ_{W_\aff}$ is also a left $\cQ$-module.

Similarly, we define:
\[
\cQ_{W_\aff}^*\hat\otimes \cQ^*_{W_\aff}=\cQ^*_{W_\aff}\times \cQ_{W_\aff}^*/\lag (af_w,bf_v)-(aw(b)f_w,f_v)\rag.
\]
By definition, there is an isomorphism of $\cQ$-modules:
\[
 \cQ_{W_\aff}^*\hat\otimes \cQ_{W_\aff}^*\cong (\cQ_{W_\aff}\hat\otimes \cQ_{W_\aff})^*, \quad 
(af_w\hat\otimes bf_v)(c\eta_x\hat\otimes d\eta_y):=acw(bd)\de_{w,x}\de_{v,y}. 
\]

There is a left $\cQ$-module homomorphism defined by the product structure of $\cQ_{W_\aff}$:
\[
m\colon \cQ_{W_\aff}\hat\otimes\cQ_{W_\aff}\to \cQ_{W_\aff},\; z_1\hat\otimes z_2\mapsto z_1z_2,
\]
whose dual is given by 
\[
m^*:\cQ_{W_\aff}^*\to (\cQ_{W_\aff}\hat\otimes \cQ_{W_\aff})^*\cong \cQ_{W_\aff}^*\hat\otimes \cQ_{W_\aff}^*, \quad 
m^*(cf_w)=\prod_{u}cf_u\hat\otimes f_{u^{-1}w}. 
\]
Indeed, given any element $a\eta_u\hat\otimes b\eta_v=au(b)\eta_u\hat\otimes \eta_v$, we have
\begin{align*}
m^*(cf_w)(a\eta_u\hat\otimes b\eta_v)&=cf_w(au(b)\eta_u\eta_v)=cf_w(au(b)\eta_{uv})\\
& =\eta_{uv,w}cau(b)=(\prod_{u}cf_u\hatot f_{u^{-1}w})(a\eta_u\hatot b\eta_v). 
\end{align*}

Recall (see also \cite[\S1.7]{Z23}) that there is the Borel map defined via the characteristic map 
\[
\rho\colon \cQ\otimes_{\cQ^{W_\aff}}\cQ\to \cQ_{W_\aff}^*,\; a\otimes b\mapsto a \bfc(b)=\prod_{w\in W_\aff}aw(b)f_w.
\]
Similar to \cite{LXZ23} one obtains the following commutative diagram:
\[
\xymatrix{\cQ\otimes_{\cQ^{W_\aff}}\cQ\ar[rrr]^-{a\otimes b\mapsto a\otimes 1\hat\otimes 1\otimes b}\ar[d]^{\rho} &&&(\cQ\otimes_{\cQ^{W_\aff}}\cQ)\hat\otimes(\cQ\otimes_{\cQ^{W_\aff}}\cQ)\ar[d]^{\rho\hat\otimes \rho}\\
\cQ_{W_\aff}^*\ar[rrr] &&&\cQ^*_{W_\aff}\hatot\cQ_{W_\aff}^*}.
\]
\begin{definition}
We define the $0$-th Hochschild homology of the bimodule $\cQ_{W_\aff}^*$ to be the quotient
\[
\mathop{\rm HH_0}(\cQ_{W_\aff}^*):=\cQ_{W_\aff}^*/\lag a\bullet f-a\odot f,\; a\in \cQ, f\in \cQ_{W_\aff}\rag.
\]
\end{definition}

Consider the dual of the map $\imath\colon \cQ_{Q^\vee}\to \cQ_{W_\aff}$, $\eta_{t_\la}\mapsto \eta_{t_\la} $. It gives a surjection
\[
\imath^*\colon \cQ_{W_\aff}^*\to \cQ_{Q^\vee}^*,\; af_{t_\la w}\mapsto a\de_{w,e}f_{t_\la}.
\]
We have
\[
\imath^*(a\bullet bf_{t_\la v}-a\odot bf_{t_\la v})=\imath^*(v(a)bf_{t_\la v}-abf_{t_{\la v}})=(v(a)b-ab)\de_{v,e}f_{t_\la v}=0.
\]
So it induces a surjection 
\[
\imath^*\colon \mathop{\rm HH_0}(\cQ_{W_\aff}^*)\to \cQ_{Q^\vee}^*.
\]

On the other hand, $\ker \imath^*=\prod_{w\neq e, \la\in Q^\vee}\cQ f_{t_\la w}$. Now for any $w$, let $x_\mu \in S$ so that $w(\mu)\neq \mu$, then we obtain 
\[
f_{t_\la w}=\frac{1}{x_\mu-w(x_\mu)}(x_\mu\odot f_{t_\la w}-x_\mu \bullet f_{t_\la w})\in \ker \imath^*.
\]
Therefore, we have proven the following lemma:

\begin{lemma}\label{lem:isop}
There is an isomorphism $\mathop{\rm HH_0}(\cQ_{W_\aff}^*)\simeq \cQ_{Q^\vee}^*$ which fits into a commutative diagram
\[
\xymatrix{\cQ_{W_\aff}^*\ar@{->>}[rr]\ar@{->>}[drr]_-{\imath^*} && \HH_0(\cQ_{W_\aff}^*)\ar[d]^-\simeq \\
&& \cQ_{Q^\vee}^*}. 
\]
\end{lemma}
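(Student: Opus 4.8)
The plan is to realize the isomorphism as the map induced by $\imath^*$, the dual of the split inclusion of left $\cQ$-modules $\imath\colon\cQ_{Q^\vee}\to\cQ_{W_\aff}$. First I would record that $\imath^*$ is the surjection with $\imath^*(af_{t_\la w})=a\,\de_{w,e}\,f_{t_\la}$, so it is onto $\cQ_{Q^\vee}^*$. Then, using that for $a\in\cQ=\cQ\eta_e$ the two actions are $a\bullet bf_v=bv(a)f_v$ and $a\odot bf_v=abf_v$, a one-line computation gives
\[
\imath^*\big(a\bullet bf_{t_\la v}-a\odot bf_{t_\la v}\big)=(v(a)-a)\,b\,\de_{v,e}\,f_{t_\la}=0,\qquad a,b\in\cQ,\ v\in W,
\]
since the factor $\de_{v,e}$ kills it unless $v=e$, in which case $v(a)-a=0$. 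Hence $\imath^*$ annihilates the relations defining $\HH_0(\cQ_{W_\aff}^*)$ and factors through a surjection $\bar\imath^*\colon\HH_0(\cQ_{W_\aff}^*)\twoheadrightarrow\cQ_{Q^\vee}^*$; the asserted commutative triangle is then automatic.

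What remains is injectivity of $\bar\imath^*$, i.e.\ that the kernel of $q\colon\cQ_{W_\aff}^*\to\HH_0(\cQ_{W_\aff}^*)$ equals $\ker\imath^*=\bigoplus_{w\in W\setminus\{e\},\,\la\in Q^\vee}\cQ f_{t_\la w}$ (the inclusion $\ker q\subseteq\ker\imath^*$ is automatic). On the summand $\cQ f_{t_\la w}$ the relation generators restrict to $a\bullet(bf_{t_\la w})-a\odot(bf_{t_\la w})=(w(a)-a)b\,f_{t_\la w}$, with $w$ acting through its finite part, so they span $I_w\cdot f_{t_\la w}$, where $I_w\subseteq\cQ$ is the ideal generated by $\{c-w(c)\mid c\in\cQ\}$. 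Everything therefore reduces to the single claim $I_v=\cQ$ for every $v\in W$ with $v\ne e$: granting it, $f_{t_\la w}\in I_w f_{t_\la w}\subseteq\ker q$ for all $w\ne e$, so $\ker\imath^*\subseteq\ker q$ and $\bar\imath^*$ is an isomorphism.

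To see $I_v=\cQ$, observe that $\cQ/I_v$ is a ring quotient of $\cQ$ on which $v$ acts trivially, so $x_\la\equiv x_{v(\la)}$ for all $\la\in Q$. Pick $\la$ with $v(\la)\ne\la$; the formal group law relations among the $x_{v^i(\la)}$, together with these identifications, force $n\cdot_F x_\la\equiv 0$ for some nonzero integer $n$ (already visible in rank two, e.g.\ $3x_\la\equiv 0$ for a Coxeter element of $A_2$). Since $n\cdot_F x_\la=nx_\la+O(x_\la^2)$ and $n$ is invertible under the standing hypothesis $\bbQ\subseteq R$, a unit multiple of $x_\la$ is $\equiv 0$; but $x_\la\in\cQ^\times$, forcing $\cQ/I_v=0$. (Whenever one can choose $\mu$ with $x_\mu-x_{w(\mu)}\in\cQ^\times$ — e.g.\ for a reflection $w$, using $x_\al-x_{-\al}$ — one instead writes directly $f_{t_\la w}=\tfrac1{x_\mu-x_{w(\mu)}}(x_\mu\odot f_{t_\la w}-x_\mu\bullet f_{t_\la w})\in\ker q$, the general case following from the orbit combination above.)

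The only genuine obstacle is this last claim: that $\cQ$ admits no nonzero quotient ring on which a nontrivial $v\in W$ acts trivially. This is exactly the place where the precise shape of the formal group algebra and the rationality of the coefficients enter; all the rest is routine bookkeeping with the $\bullet/\odot$-formulas and the basis $\{f_w\}$.
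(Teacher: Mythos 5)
Your overall route is the paper's route: factor $\imath^*$ through $\HH_0(\cQ_{W_\aff}^*)$, then show $\ker\imath^*$ dies in $\HH_0$, which you correctly reduce to the claim that for every $v\in W$, $v\neq e$, the ideal $I_v=\langle c-v(c)\mid c\in\cQ\rangle$ equals $\cQ$ (the paper does the same in one stroke by dividing by $x_\mu-v(x_\mu)$). The first half and the reduction are fine; only note that $\ker\imath^*$ is a direct \emph{product}, not a direct sum, which is harmless because $W$ is finite and, once $1=\sum_i c_i\bigl(a_i-v(a_i)\bigr)$ with finitely many $a_i,c_i\in\cQ$, the same relation elements $a_i\odot g_i-a_i\bullet g_i$ with $g_i=c_i\odot f^{(v)}$ handle all translation components of $f^{(v)}$ simultaneously.

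The gap is exactly where you flag it, and your sketch does not close it. The relations available in $\cQ/I_v$ are $x_\mu\equiv x_{v(\mu)}$ for $\mu\in Q$, and these yield $x_\nu\equiv 0$ for $\nu\in(1-v)Q$; your orbit-sum mechanism is an instance of this and only applies when the $\langle v\rangle$-orbit sum of $\la$ vanishes, i.e.\ when $\la$ is orthogonal to the fixed subspace of $v$. For an arbitrary $\la$ with $v(\la)\neq\la$ it gives nothing: in type $A_2$ with $v=s_1$ and $\la=\al_2$ the orbit sum is $\al_1+2\al_2\neq 0$, and no relation $n\cdot_F x_{\al_2}\equiv 0$ is produced. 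Moreover your final step needs $x_\la\in\cQ^\times$, which requires $\la$ to be (a multiple of) a root, since $\cQ$ only inverts $x_\al$ for $\al\in\Phi$. Both defects are repaired by one standard fact you did not supply: for $v\neq e$ there is a root $\beta$ orthogonal to the fixed space $(Q\otimes\QQ)^v$ (the pointwise stabilizer of the fixed space is a nontrivial reflection subgroup containing $v$, hence contains a reflection $s_\beta$ — Steinberg; equivalently, Carter's lemma gives a root in $\Image(1-v)$). For such $\beta$ the $\langle v\rangle$-orbit sum vanishes, so your computation gives $k\cdot_F x_\beta\equiv 0$ with $k\geq 2$; since $\QQ\subseteq R$, $k\cdot_F x_\beta$ is a unit of $S$ times $x_\beta$, and $x_\beta\in\cQ^\times$, whence $1\in I_v$. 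Equivalently, choosing $\mu\in Q$ with $\mu-v(\mu)=m\beta$, $m\neq 0$, makes $x_\mu-v(x_\mu)$ a unit of $\cQ$ — which is also the choice needed to make the paper's own one-line division legitimate: ``any $\mu$ with $v(\mu)\neq\mu$'' is too optimistic, as $x_{\al_1}-x_{\al_2}$ is not invertible in $\cQ$ (take $v$ a Coxeter element of $A_2$ and $\mu=\al_1$). So your reduction is faithful to the paper, but the crux you left open is a genuine missing step, and the heuristic offered for it fails for a general choice of $\la$.
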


By definition, we have the following commutative diagram of left $S$-modules: 
\[
\xymatrix{\cQ_{W_\aff}^*\ar@{->>}[r]^-{\imath^*} & \cQ_{Q^\vee}^*\ar@{^(->}[r]^-{\pr^*} & \cQ_{W_\aff}\\
\bfD_{W_\aff}^*\ar@{->>}[r]^-{\imath'^{*}}\ar@{^(->}[u]& \bfD_{Q^\vee}^*\ar@{^(->}[u]\ar@{^(->}[r]^-{\pr^*} & \bfD_{W_\aff}\ar@{^(->}[u]}.
\]
It is also easy to see that the surjection $\imath^*\colon\cQ_{W_\aff}^*\to \cQ_{Q^\vee}^*$ induces a surjective map
\[
\imath^{*\prime}\colon \mathop{\rm HH_0}(\bfD_{W_\aff}^*)\to \bfD_{Q^\vee}^*.
\]
Our goal is to show that the isomorphism and the diagram of Lemma~\ref{lem:isop} can be restricted to the formal Peterson subalgebra $\bfD_{Q^\vee}$. Namely, we want to prove the following

\begin{theorem}\label{thm:main3} 
The map $\imath^{*\prime}$ gives an isomorphism of Hopf algebras $\mathop{\rm HH_0}(\bfD_{W_\aff}^*)\simeq \bfD_{Q^\vee}^*$.
\end{theorem}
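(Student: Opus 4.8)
The plan is to compare the surjection $\imath^{*\prime}\colon \HH_0(\bfD_{W_\aff}^*)\to \bfD_{Q^\vee}^*$ with the already-established isomorphism $\HH_0(\cQ_{W_\aff}^*)\simeq\cQ_{Q^\vee}^*$ from Lemma~\ref{lem:isop}, and to upgrade the former to an isomorphism by controlling its kernel. Since $\imath^{*\prime}$ is visibly surjective (the excerpt already says so), the whole problem reduces to showing that the natural map $\HH_0(\bfD_{W_\aff}^*)\to\HH_0(\cQ_{W_\aff}^*)$ is injective — equivalently, that the Hochschild relations $a\bullet f-a\odot f$ already generate the \emph{entire} kernel of $\imath'^*\colon\bfD_{W_\aff}^*\to\bfD_{Q^\vee}^*$, not merely a submodule of it. In the big algebra $\cQ_{W_\aff}^*$ this was easy: one divided by $x_\mu-w(x_\mu)$. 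Over $\bfD_{W_\aff}^*$ such division is not available, so a finer argument is needed, and \textbf{this is the main obstacle}.

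To handle it I would introduce the filtration on $\bfD_{Q^\vee}^*$ advertised in the introduction, which reduces the affine statement to finite-rank pieces. Concretely: $\bfD_{Q^\vee}^*$ has an $S$-basis indexed by $W_\aff^-$ (minimal coset representatives of $W_\aff/W$), and the subsets $W_\aff^{-}\cap\{u : \ell(u)\le N\}$, or better the ``Bruhat ideals'' generated by finitely many dominant coweights $\lambda\in Q^\vee_{\ge0}$, give a filtration by $S$-submodules whose successive quotients are controlled by the finite parabolic data $W^{\lambda}$ that already appeared in the proof of Lemma~\ref{lem:injtheta}. On each graded piece the map $\imath'^*$ and the Hochschild relations can be compared against the finite Borel isomorphism $\rho_{P,\cQ}$ of Lemma~\ref{lem:injBorel} (and its integral version $\rho$ of \eqref{subsec:borel}), where one \emph{does} have enough regular elements to split the relations. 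So the strategy is: prove the kernel statement on each associated graded piece, then lift back through the filtration by a standard five-lemma / snake-lemma argument, using that everything in sight is a free (hence flat) $S$-module so the filtrations behave well under the relevant tensor and Hom operations.

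Once the isomorphism of $S$-modules (indeed of $\HH_0$'s) is established, it remains to check it is a map of Hopf algebras. Here I would not recompute coproducts from scratch; instead I would observe that all the structure maps are induced by the corresponding maps on $\cQ_{W_\aff}^*$ and $\cQ_{Q^\vee}^*$, which are compatible by the commutative square preceding Lemma~\ref{lem:isop} (the one involving $m^*$, $\rho\hat\otimes\rho$, and the Borel map). The coproduct on $\bfD_{Q^\vee}^*$ is dual to the multiplication on $\bfD_{Q^\vee}$, the product on $\bfD_{Q^\vee}^*$ is dual to the coproduct $\triangle(a\eta_{t_\la})=a\eta_{t_\la}\otimes\eta_{t_\la}$ from Proposition~\ref{prop:hopf}, and $\imath$, $\pr$ are a section–retraction pair of Hopf(-compatible) maps; so $\imath^{*\prime}$ intertwines them automatically. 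The unit and counit are likewise visibly preserved. Hence $\imath^{*\prime}$ is an isomorphism of Hopf algebras, which is Theorem~\ref{thm:main3}. I expect the filtration-and-dévissage step to be the only place requiring genuine care; the Hopf-structure verification and the surjectivity are formal given what is already set up in Sections~1–4.
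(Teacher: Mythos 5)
Your overall skeleton matches the paper's: surjectivity and Hopf--compatibility of $\imath^{*\prime}$ are formal (the product/coproduct on both sides are induced from the ones on $\cQ_{W_\aff}$ and $\cQ_{Q^\vee}$), so everything reduces to injectivity, and injectivity is attacked by a filtration indexed by the minimal coset representatives $w_\lambda$ of $t_\lambda W$, reducing the affine problem to finite pieces. However, at exactly the point you flag as ``the main obstacle'' your proposal has a genuine gap rather than a proof. The paper's filtration is on $\bfD_{W_\aff}^*$ itself (the strata $\calZ_i=\{f:\supp f\subseteq F_i\}$ with $F_i=\bigcup_{\ell(w_\lambda)\ge i}t_\lambda W$), and the entire technical content is the \emph{integral} identification of the associated graded pieces: one must prove the GKM-type divisibility conditions \eqref{eq:GKMtranl}--\eqref{eq:GKMfin} for elements of $\calZ_i$ (Lemma~\ref{lem:newGKM}, proved by pairing $f$ against elements such as $Z_\alpha^{\ell_\alpha(w_\lambda)}\eta_{t_\lambda}$ and $Z_\alpha^{\ell_\alpha(w_\lambda)}\eta_{t_\lambda}X_\alpha$ of the FADA), and then that $\calZ_i/\calZ_{i+1}$ is a free $S$-module with explicit basis given by the $Y_{I_w}^*$ (Lemma~\ref{lem:Zfil}), which in turn rests on new combinatorics of the affine Bruhat order and of $\ell_\alpha(w_\lambda)$ for translations (the appendix, Lemmas~\ref{lem:lengthalpha} and \ref{lem:tranwlambda}). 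Only then does the comparison of the graded map $\calZ_i/\calZ_{i+1}\to\calX_i/\calX_{i+1}$ go through. None of this is supplied or even correctly located in your sketch: you assert that the graded pieces are ``controlled by the finite parabolic data $W^\lambda$'' and can be ``compared against $\rho_{P,\cQ}$ of Lemma~\ref{lem:injBorel}, where one does have enough regular elements to split the relations.''

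That substitute does not work as stated, for two reasons. First, Lemma~\ref{lem:injBorel} is a statement over the localization $\cQ$ about $W_\lambda$-invariant coefficients; it is the ingredient for the injectivity of $\Theta$ in Theorem~\ref{thm:main1}, not a description of the dual FADA or of its Hochschild quotient, and the graded pieces relevant here are (twists $\Delta_\lambda\cdot\bfD_W^*$ of) the full finite dual $\bfD_W^*$, not parabolic invariants. Second, and more seriously, appealing to ``enough regular elements to split the relations'' is exactly the rational argument of Lemma~\ref{lem:isop} (division by $x_\mu-w(x_\mu)$), which is unavailable over $S$; the whole point of the theorem is the integral statement, and your plan is circular precisely where the difficulty sits. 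Relatedly, the freeness of the associated graded pieces, which you invoke to make ``the filtrations behave well,'' is not automatic but is itself the key Lemma~\ref{lem:Zfil}, whose proof needs the divisibility conditions and the appendix combinatorics. So while your reduction (kernel of $\imath'^*$ equals the span of the Hochschild relations) and the formal Hopf verification are fine, the proposal is missing the central integral/combinatorial step that constitutes the paper's proof.
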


Since the product structure on both the domain and the codomain is induced by the coproduct structure
\[
\cQ_{W_\aff}\to \cQ_{W_\aff}\otimes\cQ_{W_\aff}, \quad a\eta_w\mapsto a\eta_w\otimes \eta_w,
\]
where the codomain is the tensor product of left $\cQ$-modules, the map $\imath^{*\prime}$ is a ring homomorphism. Moreover, since the coproduct structure on both the domain and the codomain is induced by the product structure in $\cQ_{W_\aff}$ and $\cQ_{Q^\vee}$, the map $\imath^{*\prime}$ is a coalgebra homomorphism. Therefore, it only suffices to prove the injectivity of $\imath^{*\prime}$. 

To prove the latter we introduce the following filtration on the dual  $\bfD_{W_\aff}^*$ of the FADA:
\begin{definition}
Let $w_\la$ be as defined in the appendix. Set $F_i=\bigcup_{\ell(w_{\lambda})\geq i} t_\lambda W$.  
For any $f\in \cQ_{W_\aff}^*=\Hom(W_\aff, \cQ)$ set $\supp f=\{w\in W_\aff\mid f(w)\neq 0\}$. 
Define the $i$th stratum of the filtration to be
\[
\mathcal{Z}_i=\{f\in \bfD_{W_\aff}^*\mid \supp f\subseteq F_i\}.
\]
\end{definition}

We have
$\mathcal{Z}_i=\prod_{w\in F_i} S\cdot Y_{I_{w}}^*$.
Since $x\bullet af_u=u(x)af_u$ and $x\odot af_u=xa f_u$ where $x,a\in S$, $u\in W_\aff$, so each
$\mathcal{Z}_i$ is a $S$-bimodule.

Consider the following  two conditions:
\begin{gather}
\label{eq:GKMtranl}
f(\eta_{t_{\lambda}u})\in x_\alpha^{\ell_{\alpha}(w_\lambda)} S
\quad \text{ for any $u\in W$}, \\
\label{eq:GKMfin}
f(\eta_{t_{\lambda}u}-\eta_{t_{\lambda}s_{\alpha}u})
\in x_\alpha^{\ell_{\alpha}(w_\lambda)+1} S
\quad \text{ for any $u\in W$ and root $\alpha\in \Phi$}.
\end{gather}

\begin{lemma}Elements of $\calZ_i$ 
satisfy the conditions 
\eqref{eq:GKMtranl} and 
\eqref{eq:GKMfin}. 
\label{lem:newGKM}
\end{lemma}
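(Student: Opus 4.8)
\textbf{Proof plan for Lemma~\ref{lem:newGKM}.}
The strategy is to reduce both GKM-type conditions to the small-torus GKM description of the finite FADA $\bfD_W$ (as in~\cite{CZZ1,CZZ2,CZZ4}) via the ``cutting in pieces'' of $W_\aff$ along cosets $t_\lambda W$. Since $\calZ_i=\prod_{w\in F_i}S\cdot Y_{I_w}^*$ is spanned by duals of the basis elements $Y_{I_w}$ with $\ell(w_\lambda)\geq i$, it suffices to verify \eqref{eq:GKMtranl} and \eqref{eq:GKMfin} for such $f=Y_{I_w}^*$. First I would recall that for $w\in t_\lambda W$ one can write $Y_{I_w}$ in a reduced form that factors through $Y_{I_{w_\lambda}}$, where $w_\lambda$ is the minimal (or the relevant distinguished) representative in $t_\lambda W$ defined in the appendix; this is the key structural input, and it forces the values $f(\eta_{t_\lambda u})$ to be divisible by the characteristic classes $x_\alpha$ to the order $\ell_\alpha(w_\lambda)$ counting how many times the reflection $s_\alpha$ (or an affine reflection with finite part $s_\alpha$) appears in a reduced word for $w_\lambda$.

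Concretely, for \eqref{eq:GKMtranl}: expand $f(\eta_{t_\lambda u})$ by pairing $Y_{I_w}^*$ against $\eta_{t_\lambda u}$ written in the $Y$-basis, or equivalently use that the transition matrix between the $\eta$-basis and the $Y$-basis of $\bfD_{W_\aff}\otimes_S\cQ=\cQ_{W_\aff}$ has entries controlled by the push-pull operators. The divisibility $f(\eta_{t_\lambda u})\in x_\alpha^{\ell_\alpha(w_\lambda)}S$ then follows from the analogous statement for the finite FADA — this is exactly~\cite[Lemma~10.3 and \S11]{CZZ2} applied to the finite parabolic pieces — combined with the fact that translations act trivially on $S=R[[Q]]_F$, so that the ``affine'' $x$-classes attached to roots $\alpha+k\delta$ all restrict to $x_\alpha$ on $S$. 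For \eqref{eq:GKMfin}: the difference $\eta_{t_\lambda u}-\eta_{t_\lambda s_\alpha u}$ is (up to a unit and a translation) $x_{\alpha}\cdot \eta_{t_\lambda u}Y_\alpha$ or $x_\alpha\cdot X_\alpha\eta_{\cdots}$, i.e.\ it lies in $x_\alpha$ times a product that contains one extra copy of the $\alpha$-push-pull; applying $f$ and using \eqref{eq:GKMtranl} for the (longer) elements that appear then upgrades the divisibility by one power of $x_\alpha$.

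I would organize the write-up as: (1) reduce to $f=Y_{I_w}^*$ with $\ell(w_\lambda)\ge i$ by $S$-bilinearity and the product description of $\calZ_i$; (2) recall the factorization $Y_{I_w}=(\text{element in }\bfD_W)\cdot Y_{I_{w_\lambda}}$ or the dual statement, citing the appendix; (3) invoke the finite GKM description of $\bfD_W^*$ to get the base case, then (4) conclude \eqref{eq:GKMtranl}, and (5) derive \eqref{eq:GKMfin} from \eqref{eq:GKMtranl} by the reflection-difference computation above. The main obstacle I anticipate is step~(2)–(3): making precise the compatibility between a reduced word for an affine element $w\in t_\lambda W$ and the root-multiplicities $\ell_\alpha(w_\lambda)$, and checking that the divisibility genuinely only sees the finite reflection $s_\alpha$ and not the affine grading by $\delta$ — in other words, verifying that no cancellation across different cosets $t_\lambda W$ destroys the divisibility. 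Once the bookkeeping of which powers of $x_\alpha$ are contributed by the ``translation part'' versus the ``finite Weyl part'' of $w$ is pinned down (this is essentially where the definition of $w_\lambda$ and $\ell_\alpha(w_\lambda)$ in the appendix is tailored to make the argument work), the rest is a routine unwinding of the pairing.
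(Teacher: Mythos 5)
There is a genuine gap at the heart of your plan. The divisibility in \eqref{eq:GKMtranl} by the \emph{higher} power $x_\alpha^{\ell_\alpha(w_\lambda)}$ is not a consequence of any finite-type GKM statement: the finite conditions (and \cite[Lemma 10.3, \S 11]{CZZ2}) only give one power of $x_\alpha$, and only for differences $f(\eta_v-\eta_{s_\alpha v})$, never $\ell_\alpha(w_\lambda)$ powers for values. In fact the claimed divisibility is \emph{false} for a general $f\in \bfD_{W_\aff}^*$ (e.g.\ $f=\sum_v f_v$, which takes the value $1$ everywhere), so any correct proof must use the defining support condition of $\calZ_i$ in an essential way. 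This is exactly what the paper does and what your sketch never invokes: by Lemma~\ref{lem:tranwlambda}, for $1\le k\le \ell_\alpha(w_\lambda)$ the cosets $t_{\lambda\pm k\alpha^\vee}W$ (sign according to the sign of $\langle\lambda,\alpha\rangle$) lie strictly below $t_\lambda W$ in the filtration, so $f\in\calZ_i$ vanishes there; applying the element $Z_{\pm\alpha}^{\ell_\alpha(w_\lambda)}\eta_{t_\lambda}$ of the FADA (resp.\ the same with an extra $X_\alpha$) to $f$ then kills all cross terms of $(1-\eta_{t_{\pm\alpha^\vee}})^{\ell_\alpha(w_\lambda)}\eta_{t_\lambda u}$, and the resulting value $x_{\mp\alpha}^{-\ell_\alpha(w_\lambda)}f(\eta_{t_\lambda u})$ (resp.\ with an extra $x_\alpha^{-1}$ and the difference over $s_\alpha$) must lie in $S$, which is precisely \eqref{eq:GKMtranl} (resp.\ \eqref{eq:GKMfin}). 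Your reduction to $f=Y_{I_w}^*$ is harmless, but the subsequent appeal to ``the analogous statement for the finite FADA'' plus a factorization of $Y_{I_w}$ through $Y_{I_{w_\lambda}}$ does not supply this mechanism; note also that the appendix contains no such factorization (its content is the length formula of Lemma~\ref{lem:lengthalpha} and the Bruhat comparison of Lemma~\ref{lem:tranwlambda}), and for $w\in t_\lambda W$ the correct factorization is $Y_{I_w}=Y_{I_{w_\lambda}}Y_{I_v}$ with the finite factor on the \emph{right}, whereas the exponent $\ell_\alpha(w_\lambda)$ in the conditions is governed by the coset at which you \emph{evaluate}, not by the coset of $w$ indexing the basis element.

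A second, smaller gap: your step (5) derives \eqref{eq:GKMfin} from \eqref{eq:GKMtranl} via $\eta_{t_\lambda u}-\eta_{t_\lambda s_\alpha u}=x_\alpha\,\eta_{t_\lambda}X_\alpha\eta_u$. That identity is fine, but it only reduces the problem to showing $f(\eta_{t_\lambda}X_\alpha\eta_u)\in x_\alpha^{\ell_\alpha(w_\lambda)}S$, which does not follow formally from \eqref{eq:GKMtranl} (each term of the difference lies in $x_\alpha^{\ell_\alpha(w_\lambda)}S$, which gives no extra power). The paper handles \eqref{eq:GKMfin} by rerunning the same $Z_{\pm\alpha}$-power argument with the extra $X_\alpha$ inserted, again using the vanishing on the lower cosets. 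If you incorporate the support/Bruhat input as above, both conditions come out of one uniform computation.
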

\begin{proof}

Let $f\in \calZ_i$ and   $\ell(w_\la)=i$. 
Assume  $\left<\lambda,\alpha^\vee\right>\leq 0$. 
For $k\in [1,  \ell_{\alpha}(w_{\lambda})]$, by Lemma \ref{lem:tranwlambda} below, 
$w_{\lambda+k\al^\vee}<w_{\lambda}$ and in particular
$\ell(w_{\lambda+k\al^\vee})<\ell(w_{\lambda})$. So $f(\eta_{\la+k\al^\vee})=0$, and  we have for any $u\in W$, 
\begin{align*}
(Z_\al^{\ell_\al(w_\la)}\eta_{t_\la}\odot f)(\eta_u)&=\frac{1}{x_{-\al}^{\ell_\al(w_\la)}}f\big((1-\eta_{t_\alpha^\vee})^{\ell_{\alpha}(w_{\lambda})}\eta_{t_{\lambda}u}\big) \\
&=\frac{1}{x_{-\al}^{\ell_\al(w_\la)}}f(\eta_{t_{\lambda}u})\in S,\; \text{ and}\\
(Z_\al^{\ell_\al(w_\la)}\eta_{t_\la}X_\al\odot f)(\eta_u)&=\frac{1}{x_{-\al}^{\ell_\al(w_\la)}x_{\al}}f\big((1-\eta_{t_\alpha^\vee})^{\ell_{\alpha}(w_{\lambda})}(\eta_{t_{\lambda}u}-\eta_{t_{\lambda}s_{\alpha}u})\big) \\
&=\frac{1}{x_{-\al}^{\ell_\al(w_\la)}x_{\al}}f(\eta_{t_{\lambda}u}-\eta_{t_{\lambda}s_{\alpha}u})
\in S.
\end{align*}
Note that $\frac{x_\al}{x_{-\al}}$ is invertible in $S$, so we can replace $x_\al$ by $x_{-\al}$ whenever needed.

Similarly, if $\left<\lambda,\alpha^\vee\right><0$, then
$\ell(w_{\lambda-k\delta})<\ell(w_{\lambda})$ for $k\in [1,\ell_{\alpha}(w_{\lambda})]$. Thus 
\begin{align*}(Z_{-\al}^{\ell_{\al}(w_\la)}\odot f)(\eta_u)&=\frac{1}{x_\al^{\ell_\al(w_\la)}}f\big((1-\eta_{t_{-\alpha^\vee}})^{\ell_{\alpha}(w_{\lambda})}\eta_{t_{\lambda}u}\big) \\
&=\frac{1}{x_{\al}^{\ell_\al(w_\la)}}f(\eta_{t_{\lambda}w})\in S,\;\text{ and}\\
(Z_{-\al}^{\ell_\al(w_\la)}\eta_{t_\la}X_{\al}\odot f)(\eta_u)&=\frac{1}{x_\al^{\ell_\al(w_\la)+1}}f\big((1-\eta_{t_{-\alpha^\vee}})^{\ell_{\alpha}(w_{\lambda})}(\eta_{t_{\lambda}w}-\eta_{t_{\lambda}s_{\alpha}u})\big) \\
&=\frac{1}{x_\al^{\ell_\al(w_\la)+1}}f(\eta_{t_{\lambda}u}-\eta_{t_{\lambda}s_{\alpha}u}) 
\in S. \qedhere
\end{align*}
\end{proof}

Define 
$$\mathcal{Y}_{(i)}=\left\{f\in \Hom(F_i\backslash F_{i+1},S)\mid
f 
\text{ satisfies the conditions 
\eqref{eq:GKMtranl}}
\text{ and 
\eqref{eq:GKMfin}}
\right\}.$$
Here $f(\eta_{v_1}-\eta_{v_2}):=f(v_1)-f(v_2)$.  
Note that $\mathcal{Y}_{(i)}$ is a $S$-bimodule in the usual sense, that is $(a\odot f)(v)=af(v)$ and $(a\bullet f)(v)=v(a)f(v)$. 
By Lemma \ref{lem:newGKM}, we have a natural  projection $\mathcal{Z}_i\to \mathcal{Y}_{(i)}$, which induces an injective $S$-bimodule map 
$$\res\colon \mathcal{Z}_i/\mathcal{Z}_{i+1}\longrightarrow \mathcal{Y}_{(i)}.$$

\begin{lemma}\label{lem:Zfil}
The map $\res$ is an isomorphism of $S$-bimodules. In particular, $\calZ_i/\calZ_{i+1}$ is free of rank $|F_i\backslash F_{i+1}|$. 
\end{lemma}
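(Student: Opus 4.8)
\textbf{Proof proposal for Lemma~\ref{lem:Zfil}.}
The plan is to prove surjectivity of $\res$, since injectivity is already established. The codomain $\calY_{(i)}$ decomposes over the $W$-orbits in $F_i\backslash F_{i+1}$: writing $F_i\backslash F_{i+1}=\bigsqcup_{\ell(w_\la)=i} t_\la W$, an element $f\in\calY_{(i)}$ is determined by its restrictions $f_\la$ to the individual cosets $t_\la W$, and conditions \eqref{eq:GKMtranl} and \eqref{eq:GKMfin} are imposed coset-by-coset. So it suffices to realize, for each fixed $\la$ with $\ell(w_\la)=i$, an arbitrary $S$-valued function on $t_\la W$ satisfying \eqref{eq:GKMtranl} and \eqref{eq:GKMfin} as the image of some $g\in\bfD_{W_\aff}^*$ whose support is contained in $\bigcup_{\ell(w_\mu)\ge i} t_\mu W$ (so $g\in\calZ_i$) and which vanishes on all strictly higher strata except the $\la$ one (so its class in $\calZ_i/\calZ_{i+1}$ is supported exactly on $t_\la W$).

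The key step is a reduction to the finite case already handled in \cite{CZZ2} and \cite{LXZ23}. Fix $\la$ with $\ell(w_\la)=i$; the stabilizer $W_\la\subseteq W$ is a parabolic, and the relevant part of $w_\la$'s length is recorded by the numbers $\ell_\al(w_\la)$ appearing in the conditions. Conditions \eqref{eq:GKMtranl} and \eqref{eq:GKMfin} restricted to the coset $t_\la W$ say precisely that the function $u\mapsto f(\eta_{t_\la u})$, viewed as an element of $\cQ_W^*=\Hom(W,\cQ)$, lies in the $S$-submodule cut out by the "twisted" GKM conditions with multiplicities $\ell_\al(w_\la)$ along each wall. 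This is exactly (a translate of) the description of $\bfD_W^*$, or more precisely of the appropriate push-forward/twist of $\bfD_W^*$ governed by the element $Y_{I_{w_\la}}$ or the $Z_\al$-products appearing in the proof of Lemma~\ref{lem:newGKM}; one uses the basis $\{Y_{I_w}^*\}$ and the fact that $\calZ_i=\prod_{w\in F_i}S\cdot Y_{I_w}^*$. Concretely, for $w\in F_i\backslash F_{i+1}$ the dual basis element $Y_{I_w}^*$ restricts on $F_i\backslash F_{i+1}$ to the delta function at $w$ after applying $\res$ only up to a triangular change of basis; one shows by the triangularity of $\{Y_{I_w}\}$ with respect to Bruhat order (and the GKM computations of Lemma~\ref{lem:newGKM}) that $\res$ sends the free $S$-module $\bigoplus_{w\in F_i\backslash F_{i+1}}S\cdot Y_{I_w}^*$ onto a free $S$-module of the same rank inside $\calY_{(i)}$, and then that this image is all of $\calY_{(i)}$ by the finite Borel-type isomorphism (Lemma~\ref{lem:injBorel} and \cite[\S11]{CZZ2}).

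The main obstacle I anticipate is bookkeeping the mismatch between the \emph{abstract} conditions defining $\calY_{(i)}$ (multiplicities $x_\al^{\ell_\al(w_\la)}$, with the extra $+1$ in the difference condition) and the \emph{concrete} $S$-module $\bigoplus_{w\in F_i\backslash F_{i+1}}S\cdot Y_{I_w}^*$ produced by the FADA: one must check that these two $S$-modules of functions on $F_i\backslash F_{i+1}$ actually coincide, not merely that one contains the other. Containment of $\calZ_i/\calZ_{i+1}$ in $\calY_{(i)}$ is Lemma~\ref{lem:newGKM}; for the reverse, the cleanest route is to count: both are free $S$-modules, the source has rank $|F_i\backslash F_{i+1}|$ (it has an explicit basis), and the target $\calY_{(i)}$ is also free of that rank because on each coset $t_\la W$ the conditions define a free $S$-module of rank $|W_\la\backslash W|=|W^\la|$ by the finite-type result, and $\sum_{\ell(w_\la)=i}|W^\la|=|F_i\backslash F_{i+1}|$. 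Since $\res$ is an injective $S$-bimodule map between free $S$-modules of the same finite rank whose image is a \emph{saturated} (i.e.\ the cokernel is, after localization at $\cQ$, zero — because over $\cQ$ all GKM conditions become vacuous and $\res\otimes_S\cQ$ is visibly an isomorphism), $\res$ itself is an isomorphism. This last localization argument, exactly as in Corollary~\ref{thm:DQDWDWa}, is what upgrades "injective of the right rank" to "bijective," and it is the step I would write out most carefully.
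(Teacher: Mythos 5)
There is a genuine gap at the final step. Your strategy is ``injective $S$-bimodule map between free modules of the same finite rank, which becomes an isomorphism after base change to $\cQ$, hence an isomorphism'' — but this implication is false: multiplication by $x_\alpha$ on $S$ is injective, has source and target free of rank one, and becomes an isomorphism over $\cQ$, yet is not surjective. Torsion cokernel is not zero cokernel, and the image being ``saturated'' is exactly what you would need to prove, not something that follows from generic bijectivity. Note also that in Corollary~\ref{thm:DQDWDWa} the localization trick is used only to get \emph{injectivity}; surjectivity there is proved separately, so that corollary does not provide the upgrade you invoke. A secondary problem is the rank bookkeeping: $F_i\backslash F_{i+1}$ is a union of full cosets $t_\lambda W$, so each coset contributes rank $|W|$ (the basis of $\calZ_i/\calZ_{i+1}$ is $\{Y_{I_w}^*\}$ over \emph{all} $w\in t_\lambda W$, and $\calY_{(i)}$ restricted to a coset is $\Delta_\lambda\cdot\bfD_W$, of rank $|W|$), not $|W^\lambda|$; in particular $\sum_{\ell(w_\lambda)=i}|W^\lambda|\neq|F_i\backslash F_{i+1}|$ in general.

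The actual content needed for surjectivity — and what the paper does — is an elimination argument that you only gesture at with ``triangularity'': given $f\in\calY_{(i)}$, pick a \emph{minimal} element $w=t_\lambda u$ of $\supp(f)$ and show $f(\eta_w)\in\prod_{\alpha\in\Phi^+}x_\alpha^{\ell_\alpha(w)}S$. This uses Lemma~\ref{lem:lengthalpha}: for each $\alpha$ one has $\ell_\alpha(w)\in\{\ell_\alpha(w_\lambda),\ell_\alpha(w_\lambda)+1\}$; in the first case \eqref{eq:GKMtranl} gives the divisibility directly, and in the second case $\ell_\alpha(t_\lambda s_\alpha u)=\ell_\alpha(w_\lambda)<\ell_\alpha(w)$ forces $t_\lambda s_\alpha u<w$ in Bruhat order, so minimality of $w$ gives $f(\eta_{t_\lambda s_\alpha u})=0$ and \eqref{eq:GKMfin} yields $f(\eta_w)\in x_\alpha^{\ell_\alpha(w_\lambda)+1}S=x_\alpha^{\ell_\alpha(w)}S$; relative primality of the $x_\alpha$'s then gives the product divisibility. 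Since $Y_{I_w}^*(\eta_w)=\prod_{\alpha>0}x_\alpha^{\ell_\alpha(w)}$, subtracting the appropriate $S$-multiple of $Y_{I_w}^*$ strictly shrinks the support and one iterates. Without this divisibility-at-a-minimal-element step (which is where Lemma~\ref{lem:newGKM}'s conditions and the appendix combinatorics really enter), the identification of the image of $\res$ with all of $\calY_{(i)}$ is not established.
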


\begin{proof}We only need to prove that $\res$ is surjective. 
Let $f\in \mathcal{Y}_{(i)}$. We pick a minimal element $w\in \supp(f)$. 
We first show that 
$$f(\eta_{w})\in \prod_{\alpha\in \Phi^+} x_{\alpha}^{\ell_{\alpha}(w)}S.$$
As $x_{\alpha}$'s are relative prime, it reduces to show 
\begin{equation}\label{eq:consmallGKM}
f(\eta_w)\in x_{\alpha}^{\ell_{\alpha}(w)}S
\end{equation}
for each root $\alpha$. 

If $w=t_{\lambda}u$ for $\lambda\in Q^\vee$ and $u\in W$ and $\ell_\al(w_\la)=i$. 
By Lemma \ref{lem:lengthalpha} below,  $\ell_{\alpha}(w)\in \{\ell_{\alpha}(w_{\la}),\ell_{\alpha}(w_{\la})+1\}$. 
If $\ell_{\alpha}(w)=\ell_{\alpha}(w_{\lambda})$, then \eqref{eq:consmallGKM} follows from \eqref{eq:GKMtranl} directly. 
If $\ell_{\alpha}(w)=\ell_{\alpha}(w_{\lambda})+1$, by \eqref{eq:1} and \eqref{eq:2}, we have
$$\ell_{\alpha}(t_{\lambda}s_{\alpha}u)=\ell_{\alpha}(w_{\lambda})<\ell_{\alpha}(w).$$
It implies $t_{\lambda}s_{\alpha}u<w$ (note that $t_{\lambda}s_{\alpha}u$ and $w$ are always comparable under the  Bruhat order). 
By \eqref{eq:GKMfin}, we have 
$$f(\eta_{t_{\lambda}u}-\eta_{t_{\lambda}s_{\alpha}u})
=f(\eta_{w})
\in x_\alpha^{\ell_{\alpha}(w_\lambda)+1} S.
$$

Note that the images of $Y_{I_w}^*$ for $w\in Z_i\setminus Z_{i+1}$ form a basis of $\mathcal{Z}_i/\mathcal{Z}_{i+1}$, and $Y_{I_{w}}^*(\eta_{w})=\prod_{\alpha>0} x_{\alpha}^{\ell_{\alpha}(w)}$. 
The conclusion then follows by replacing $f$ by
 $f-\frac{f(\eta_w)}{\prod_{\alpha\in \Phi^+}x_\al^{\ell_\al(w_\la)}}Y_{I_w}^*$. 
\end{proof}

Denote for each $\lambda\in Q^\vee$, $\Delta_{\lambda}= \prod_{\alpha>0} x_{\alpha}^{\ell_{\alpha}(w_{\lambda})}\in S$.
It is clear that we have an $S$-bimodule isomorphism
$$\mathcal{Y}_{(i)}\cong \bigoplus_{\ell(w_{\lambda})=i}
\Delta_\lambda \cdot \bfD_W.$$

To finish the proof of Theorem \ref{thm:main3}, we define a filtration on $
\bfD_{Q^\vee}^*$ by 
\[
\calX_i=\{f\in \bfD_{Q^\vee}^*|\supp(f)\subset F_i\}.
\]
Then $Y_{I_w}^*$ with $\ell(w_\la)=i$ is a $S$-basis of $\calX_i/\calX_{i+1}$. So the rank of $\calX_i/\calX_{i+1}$ is $|F_i\backslash F_{i+1}|$. Moreover, by definition, we know that $\imath^{*\prime}$ induces a map on each associated graded piece:
\[
\imath^{*\prime}:\calZ_i/\calZ_{i+1}\to \calX_{i}/\calX_{i+1}.
\]
From Lemma \ref{lem:Zfil},  the rank of $\calZ_i/\calZ_{i+1}$ is $|F_i\backslash F_{i+1}|$, therefore, $\imath^{*\prime}$ is an isomorphism.


\appendix
\section{}

In this section, we prove several combinatorial properties of the affine Weyl group that are used in the proof of Theorem \ref{thm:main3}.

For $w\in W_a$, denote
$$\ell_{\alpha}(w)=|\{\beta=\pm \alpha+k\delta>0| w^{-1}(\beta)<0\}|.$$
It is clear that $\ell(w)=\sum_{\alpha>0} \ell_\alpha(w)$. 
Also denote by $w_{\lambda}\in W_a^-$ the minimal representative of $t_{\lambda}W$. Then 
$w_{\lambda}\leq w_{\mu}$ if and only if there exists $w\in t_{\lambda}W$ and $y\in t_{\mu}W$ such that $w\leq y$. Note that  $w\leq y$ implies $\ell_{\alpha}(w)\leq \ell_{\alpha}(y)$ for all $\al\in \Phi_+$, so after fixing $\al$ $\ell_\al(w_\la)$ becomes minimal for elements $w$ from $t_\la W$. 

\begin{lemma}\label{lem:lengthalpha}
We have the following property: 
$$\ell_{\alpha}(w_{\lambda})=
\begin{cases}
-\left<\lambda,\alpha\right>, & \text{ if }\left<\lambda,\alpha\right>\leq 0,\\
\left<\lambda,\alpha\right>-1, & \text{ if } \left<\lambda,\alpha\right>> 0.
\end{cases}$$
\end{lemma}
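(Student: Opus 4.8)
The plan is to compute $\ell_\alpha(w_\lambda)$ directly from the definition by counting the positive affine roots $\beta \in \{\pm\alpha + k\delta > 0\}$ sent to negative roots by $w_\lambda^{-1}$. The starting point is the standard formula for the inversion set of a translation: for the minimal representative $w_\lambda$ of $t_\lambda W$, one has $w_\lambda = t_\lambda u$ for a suitable $u \in W$, but it is cleanest to work with $t_\lambda$ itself and then correct for the finite part. Recall the classical fact that $t_\lambda^{-1}(\alpha + k\delta) = \alpha + (k + \langle\lambda,\alpha^\vee\rangle)\delta$ (normalizing so that $t_\lambda$ acts on affine roots by this shift), so $t_\lambda^{-1}(\alpha+k\delta) < 0$ iff $k + \langle\lambda,\alpha^\vee\rangle < 0$ or ($k+\langle\lambda,\alpha^\vee\rangle = 0$ and $\alpha<0$); combined with the constraint $\alpha+k\delta>0$ (i.e. $k>0$, or $k=0$ and $\alpha>0$), one counts, for each fixed finite root direction $\pm\alpha$, exactly the integers $k$ in an interval of length $|\langle\lambda,\alpha^\vee\rangle|$.

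Concretely, I would first handle $w = t_\lambda$ and show $\ell_\alpha(t_\lambda) = |\langle\lambda,\alpha^\vee\rangle|$ by the interval count above (summing the contributions of the two directions $+\alpha+k\delta$ and $-\alpha+k\delta$, which together give $|\langle\lambda,\alpha^\vee\rangle|$ positive affine roots of the form $\pm\alpha+k\delta$ that become negative). Then I would pass from $t_\lambda$ to its minimal coset representative $w_\lambda$: writing $t_\lambda = w_\lambda v$ with $v \in W$ and $\ell(t_\lambda) = \ell(w_\lambda) + \ell(v)$, the finite element $v$ is the longest element of $W_\lambda = \mathrm{Stab}(\lambda)$ composed appropriately, and multiplying on the right by $v \in W$ only affects whether the finite roots $\pm\alpha$ (the $k=0$ terms) are inversions. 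This shows $\ell_\alpha(w_\lambda)$ differs from $\ell_\alpha(t_\lambda) = |\langle\lambda,\alpha^\vee\rangle|$ by at most one, and the correction is $0$ precisely when $\langle\lambda,\alpha^\vee\rangle \le 0$ and $-1$ when $\langle\lambda,\alpha^\vee\rangle > 0$; the sign of the correction is governed by whether $\alpha$ or $-\alpha$ is in the inversion set of the minimal representative, which in turn is determined by the dominance chamber $\lambda$ lives in.

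The main obstacle, I expect, is bookkeeping the $k=0$ (finite root) case carefully and verifying that the right multiplication by $v$ removes exactly one inversion among $\{\pm\alpha\}$ when $\langle\lambda,\alpha^\vee\rangle > 0$ and none when $\langle\lambda,\alpha^\vee\rangle \le 0$ — this is where the asymmetry in the statement comes from, and it is essentially the assertion that $w_\lambda^{-1}$ is dominant-like on the finite roots (sends $\alpha > 0$ with $\langle\lambda,\alpha^\vee\rangle = 0$ to a positive root, consistent with $w_\lambda$ being the \emph{minimal} representative). An alternative, perhaps cleaner route avoiding the factorization $t_\lambda = w_\lambda v$: induct on $\ell(w_\lambda)$ using the covering relations of $W_\aff^-$ already invoked in the proof of Theorem~\ref{thm:main3} (Lemma~\ref{lem:tranwlambda}), checking that adding $\alpha^\vee$ to $\lambda$ or subtracting $\delta$ changes $\ell_\alpha$ by exactly $1$ in the predicted way; but the direct inversion-set count is the most self-contained and I would present that.
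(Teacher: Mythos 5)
Your overall route (compute $\ell_\alpha(t_\lambda)=|\langle\lambda,\alpha\rangle|$ first, then correct for the finite part of $w_\lambda$) can be made to work, but as written it has two genuine problems. First, the bookkeeping claim on which your correction step rests is false: right multiplication by $v\in W$ (writing $t_\lambda=w_\lambda v$) does \emph{not} only affect the $k=0$ terms $\pm\alpha$. Since $w_\lambda^{-1}=v\,t_\lambda^{-1}$ and $t_\lambda^{-1}(\pm\alpha+k\delta)=\pm\alpha+(k\pm\langle\lambda,\alpha\rangle)\delta$, the only element of the $\pm\alpha$-string whose inversion status can change is the unique positive affine root whose image under $t_\lambda^{-1}$ has zero $\delta$-coefficient, namely $\alpha-\langle\lambda,\alpha\rangle\delta$ when $\langle\lambda,\alpha\rangle\le 0$ and $-\alpha+\langle\lambda,\alpha\rangle\delta$ when $\langle\lambda,\alpha\rangle>0$; the $k=0$ root $\alpha$ itself keeps its status whenever $\langle\lambda,\alpha\rangle\neq 0$. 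In particular, your stated criterion ("whether $\alpha$ or $-\alpha$ is in the inversion set of the minimal representative") gives the wrong answer: for $\langle\lambda,\alpha\rangle<0$ antidominant one has $w_\lambda=t_\lambda$, the finite root $\alpha$ \emph{is} an inversion of $w_\lambda$, and yet the correction is $0$. Second, the decisive step — that for the minimal representative the correction is $0$ when $\langle\lambda,\alpha\rangle\le 0$ and $-1$ when $\langle\lambda,\alpha\rangle>0$ — is exactly the content of the lemma, and you only gesture at it ("determined by the dominance chamber", "dominant-like", and only for $\langle\lambda,\alpha\rangle=0$). What is actually needed is: with $t_\lambda=w_\lambda v$, one has $v(\alpha)>0$ precisely when $\langle\lambda,\alpha\rangle\le 0$ (for $\alpha>0$); this does follow from minimality of $w_\lambda$ (i.e.\ $w_\lambda(\gamma)>0$ for every finite positive $\gamma$) by a short computation, but that computation must be supplied — without it the plan is circular at its key point.

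For comparison, the paper avoids identifying the finite part of $w_\lambda$ altogether: it computes $\ell_\alpha(t_\lambda u)$ for \emph{every} $u\in W$ in one stroke, obtaining $-\langle\lambda,\alpha\rangle+\epsilon$ resp.\ $\langle\lambda,\alpha\rangle-1+\epsilon$ with $\epsilon\in\{0,1\}$ determined by the sign of $u^{-1}(\alpha)$, and then concludes because $\ell_\alpha(w_\lambda)$ is the minimum of $\ell_\alpha$ over the coset $t_\lambda W$ (using $w_\lambda\le w$ for all $w\in t_\lambda W$ and the monotonicity of $\ell_\alpha$ under Bruhat order noted just before the lemma). If you repair your argument, either import that minimality trick or prove the sign characterization of $v$ explicitly; also note that your suggested alternative via Lemma~\ref{lem:tranwlambda} would be circular in the paper's logical order, since that lemma's proof uses the displayed formulas \eqref{eq:1} and \eqref{eq:2} established in the course of proving the present one.
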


\begin{proof}
For $w=t_{\lambda}u\in t_{\lambda}W$, $\be=\pm \al+k\de>0$ (so $k\ge 0$), we have
\begin{equation}\label{eq:w}
w^{-1}(\be)=w^{-1}(\pm \alpha+k\delta) = \pm u^{-1}\alpha+(k\pm \left<\lambda,\alpha\right>)\delta.
\end{equation}

If $\lag\la,\al\rag\le 0$, then $w^{-1}(\be)<0$ implies $\be=\al+k\de $, and moreover,  $k\in [0,\ell_\al(w)-1]$ (since $\ell_\al(w)=|Inv_\al(w)|$). So we have
\begin{equation}\label{eq:1}\ell_{\alpha}(w)=-\left<\lambda,\alpha\right>+
\begin{cases}
1, & u^{-1}(\alpha)<0,\\
0, & u^{-1}(\alpha)>0,
\end{cases}
\end{equation}
and the minimal value is $-\lag \la,\al\rag$.

If $\left<\lambda,\alpha\right>> 0$, then $w^{-1}(\be)<0$ if and only if  $\be=-\al+k\de$ and  $k\in [1,\ell_{\alpha}(w)]$, in which case we have
\begin{equation}\ell_{\alpha}(w)=\left<\lambda,\alpha\right>-
\begin{cases}
1, & u^{-1}(\alpha)<0,\\
0, & u^{-1}(\alpha)>0.
\end{cases}
\label{eq:2}
\end{equation}
The minimal value is $\lag \la,\al\rag-1$. 
\end{proof}

\begin{lemma}\label{lem:tranwlambda}
Let   $\alpha\in \Phi^+$, $\lambda\in Q^\vee$, and $k\in [0,\ell_\al(w_\la)]$. 

If $\lag \la, \al\rag\le 0$, then 
$w_\la>w_{\lambda+k\alpha^\vee}$. If $\lag\la,\al\rag>0$, then 
$w_\la>w_{\lambda-k\alpha^\vee}$. 
\end{lemma}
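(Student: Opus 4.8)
The plan is to reduce the statement to a one-step move $k \mapsto k+1$ and then to an explicit Bruhat-order comparison of minimal coset representatives. First I would observe that it suffices to treat the case $\langle\lambda,\alpha\rangle\le 0$ and $k\in[0,\ell_\alpha(w_\lambda)-1]$, proving $w_{\lambda+(k+1)\alpha^\vee} < w_{\lambda+k\alpha^\vee}$; the case $\langle\lambda,\alpha\rangle>0$ follows by applying the first case to $-\lambda$ together with the symmetry $w_{-\mu}$ versus $w_\mu$ (equivalently, by replacing $\alpha$ by $-\alpha$ in the length count), and the full chain of inequalities then telescopes. Note that for $k$ in the stated range we have $\langle\lambda+k\alpha^\vee,\alpha\rangle = \langle\lambda,\alpha\rangle + 2k \le 0$ as long as $2k \le -\langle\lambda,\alpha\rangle = \ell_\alpha(w_\lambda)$ (by Lemma~\ref{lem:lengthalpha}), so the translated coweights stay on the same side; the borderline value of $k$ where $\langle\lambda+k\alpha^\vee,\alpha\rangle$ crosses $0$ will need a small separate check using the $-1$ in Lemma~\ref{lem:lengthalpha}.

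Next I would make the one-step comparison explicit. Using Lemma~\ref{lem:lengthalpha} one computes, for $\mu = \lambda + k\alpha^\vee$ with $\langle\mu,\alpha\rangle\le 0$, that $\ell_\alpha(w_\mu) = -\langle\mu,\alpha\rangle$ while $\ell_\beta(w_\mu) = \ell_\beta(w_{\mu+\alpha^\vee})$ for every positive root $\beta$ with $\langle\cdot,\beta\rangle$ unchanged in the relevant sign regime — the only $\beta$ for which $\langle\mu,\beta^\vee\rangle$ and $\langle\mu+\alpha^\vee,\beta^\vee\rangle$ can differ are those not orthogonal to $\alpha^\vee$, and for these I would verify directly from the two cases of Lemma~\ref{lem:lengthalpha} that $\ell_\beta$ does not increase. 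Summing over all positive roots, $\ell(w_{\mu+\alpha^\vee}) \le \ell(w_\mu) - 2$ (the $\alpha$-contribution drops by $2$, since $\langle\mu+\alpha^\vee,\alpha\rangle = \langle\mu,\alpha\rangle + 2$). Since minimal coset representatives of $W_a/W$ form a well-behaved subposet of Bruhat order and lengths differ, it remains to produce an actual order relation, not just a length inequality.

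For the order relation itself I would use the translation picture: $w_{\mu+\alpha^\vee}$ and $w_\mu$ both lie in $W_a^-$, and $t_{\mu+\alpha^\vee} = t_{\alpha^\vee} t_\mu$; since $\langle\mu,\alpha\rangle\le 0$ the element $s_{\alpha+\delta}$ (or the appropriate affine reflection) sends $t_\mu$-related elements downward, and one checks $s_{\alpha+(k'+1)\delta}\cdots$ applied to a representative of $t_\mu W$ lands in $t_{\mu+\alpha^\vee}W$ with strictly smaller length. Concretely: pick the reflection $r$ in the affine root $\alpha + (\ell_\alpha(w_\mu))\delta$ which, by the inversion-set description $\ell_\alpha(w) = |\{\beta = \alpha+k\delta>0 : w^{-1}(\beta)<0\}|$, is an inversion of $w_\mu$; then $r\, w_\mu$ (or $w_\mu\, r'$ on the right, after adjusting by an element of $W$) is shorter, lies in $t_{\mu+\alpha^\vee}W$, and hence is $\ge w_{\mu+\alpha^\vee}$, giving $w_{\mu+\alpha^\vee}\le r w_\mu < w_\mu$. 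The routine bookkeeping is identifying which affine reflection performs the shift $t_\mu \rightsquigarrow t_{\mu+\alpha^\vee}$ and checking it is an inversion.

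The main obstacle I anticipate is precisely this last identification — tracking the coset representative through the reflection and confirming the length genuinely drops at the single borderline value of $k$ where $\langle\mu,\alpha\rangle$ passes from negative to zero, because there Lemma~\ref{lem:lengthalpha} changes branch and the clean "$-\langle\cdot,\cdot\rangle$" formula acquires the $-1$ correction; everything else is a direct if slightly tedious application of \eqref{eq:1}, \eqref{eq:2}, and \eqref{eq:w}.
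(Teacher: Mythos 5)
Your proposal contains a genuine gap, and it is the very first step: the reduction to the one-step comparison $w_{\la+(k+1)\al^\vee}<w_{\la+k\al^\vee}$ followed by telescoping cannot work, because the sequence $k\mapsto w_{\la+k\al^\vee}$ is not monotone on $[0,\ell_\al(w_\la)]$. Writing $m=-\lag\la,\al\rag\ge0$, the one-step descent holds only while $\lag\la+k\al^\vee,\al\rag\le-1$; once $2k\ge m$ it reverses, so the failure is not a single borderline value of $k$ needing a ``small separate check'' but the entire upper half of the range. Concretely, in type $\hat A_1$ take $\la=-2\al^\vee$, so $m=\ell_\al(w_\la)=4$; in the notation of Section 4 one has $w_{-2\al^\vee}=\sigma_4$, $w_{-\al^\vee}=\sigma_2$, $w_0=\sigma_0=e$, $w_{\al^\vee}=\sigma_1$, $w_{2\al^\vee}=\sigma_3$, and the sequence $\sigma_4>\sigma_2>\sigma_0<\sigma_1<\sigma_3$ descends and then ascends (this zig-zag is exactly the content of the last lemma of the appendix). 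Your auxiliary estimate $\ell(w_{\mu+\al^\vee})\le\ell(w_\mu)-2$ already fails at $\lag\mu,\al\rag=0$ (lengths $0$ and $1$ above). Since the chain turns around and climbs back up to $w_{\la+m\al^\vee}$, whose $\al$-length is $\ell_\al(w_\la)-1$, no concatenation of strict one-step descents can reach $k=\ell_\al(w_\la)$: the comparison with $w_\la$ must be made directly for each $k$. The appeal to symmetry for the case $\lag\la,\al\rag>0$ is also not automatic, since $\ell_\al(w_{-\la})$ and $\ell_\al(w_\la)$ differ by $1$ (the two branches of Lemma~\ref{lem:lengthalpha}); this is why the two cases are treated separately.

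The second half of your proposal --- producing an actual Bruhat relation via an affine reflection that is an inversion --- is the right idea, but it has to be run from $w_\la$ to $w_{\la+k\al^\vee}$ in one stroke for each $k$, which is what the paper does: choose the representative $w=t_\la u\in t_\la W$ with $u^{-1}(\al)<0$ (resp.\ $u^{-1}(\al)>0$ when $\lag\la,\al\rag>0$), check $w>s_\al w$ using \eqref{eq:w}, then check that $-\al+k\de$ is an inversion of $s_\al w$ for every $k\in[1,\ell_\al(w_\la)]$, giving $s_\al w>s_{-\al+k\de}\,s_\al w=t_{k\al^\vee}w$; since $t_{k\al^\vee}w\in t_{\la+k\al^\vee}W$, the characterization of the order on minimal coset representatives yields $w_\la>w_{\la+k\al^\vee}$. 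Note also that a single left multiplication by $s_{\al+j\de}=t_{-j\al^\vee}s_\al$ sends the coset $t_\mu W$ to $t_{s_\al(\mu)-j\al^\vee}W$, not to $t_{\mu+\al^\vee}W$, so even the bookkeeping you defer (``which reflection performs the shift'') forces the two-reflection composite $s_{-\al+k\de}s_\al=t_{k\al^\vee}$ rather than one reflection; with that correction your argument essentially becomes the paper's, but the telescoping framework it is embedded in has to be abandoned.
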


\begin{proof}
If $\left<\lambda,\alpha\right>\leq 0$,  consider $w\in t_\la W$ such that $w=t_{\lambda}u$ with $u^{-1}(\alpha)<0$.  
We have \[w^{-1}(\al)=u^{-1}(\al)+\lag\la,\al\rag\de<0,\] so $w>s_\al w$. From \eqref{eq:1}, we get $\ell_{\alpha}(w)=\ell_{\alpha}(w_{\lambda})+1=-\left<\lambda,\alpha\right>+1$. 
Since $1\leq k\leq \ell_{\alpha}(w_{\lambda})=-\left<\lambda,\alpha\right>$, we get \[(s_{\alpha}w)^{-1}(-\alpha+k\delta)=u^{-1}\alpha+(k+\left<\lambda,\alpha\right>)\delta <0,\] which implies 
\[s_\alpha w>s_{-\alpha+k\delta}s_{\alpha}w=t_{k\alpha^\vee}w.\]
Therefore, $w>t_{k\al^\vee }w$. Since $w\in t_{\lambda}W$ and $t_{k\alpha^\vee}w\in t_{\lambda+k\alpha^\vee}W$, we get 
$w_{\lambda}>w_{\lambda+k\al^\vee}$. 

If $\left<\lambda,\alpha\right>> 0$, consider $w=t_{\lambda}u$ with $u^{-1}(\alpha)>0$. We have \[w^{-1}(-\alpha+\delta)
=u^{-1}(\alpha)+(1-\left<\lambda,\alpha\right>)\delta <0,\] so $w>s_{-\al+\de}w$. From \eqref{eq:2}, we have $\ell_{\alpha}(w)=\lag\la,\al\rag=\ell_{\alpha}(w_{\lambda})+1. $
Since $1\leq k\leq \ell_{\alpha}(w)-1=\ell_{\alpha}(w_{\lambda})=\left<\lambda,\alpha\right>-1$, we get
$$s_{-\alpha+\delta} w>s_{\alpha+(k-1)\delta}s_{-\alpha+\delta}x=t_{-k\alpha^\vee}w.$$
So $w>t_{-k\al^\vee}w$. Since $w\in t_{\lambda}W$ and $t_{k\alpha^\vee}w\in t_{\lambda+k\alpha^\vee}W$, we have 
$w_{\lambda}>w_{\lambda+k\alpha^\vee}$. 
\end{proof}

The following lemma is not used in this paper, but it is interesting on its own.
\begin{lemma}If $\lag\la,\al\rag=0$,  then we have the following sequence 
\[
w_{\la}<w_{\la+\al^\vee}<w_{\la-\al^\vee}<w_{\la+2\al^\vee}<w_{\la-2\al^\vee}<\cdots.
\]
If $\lag\la,\al\rag=1$, then we have the following sequence:
\[
w_{\la}<w_{\la-\al^\vee}<w_{\la+\al^\vee}<w_{\la-2\al^\vee}<w_{\la+2\al^\vee}<\cdots.
\]
The lengths $\ell_\al$ are given by $(0,1,2,3,4,\ldots)$.
\end{lemma}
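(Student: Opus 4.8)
The plan is to derive the statement from the two combinatorial lemmas already established above --- Lemma~\ref{lem:lengthalpha}, giving the explicit value of $\ell_\al(w_\la)$, and Lemma~\ref{lem:tranwlambda}, giving the Bruhat comparisons $w_\la>w_{\la+k\al^\vee}$ (resp. $w_\la>w_{\la-k\al^\vee}$) --- together with the elementary identity
\[
\lag\la+m\al^\vee,\al\rag=\lag\la,\al\rag+2m,\qquad m\in\ZZ,
\]
which follows from $\lag\al^\vee,\al\rag=2$.

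First I would name the terms of the proposed chains. In the case $\lag\la,\al\rag=0$ set $\mu_0=\la$ and, for $j\ge1$, $\mu_{2j-1}=\la+j\al^\vee$, $\mu_{2j}=\la-j\al^\vee$; in the case $\lag\la,\al\rag=1$ set $\mu_0=\la$ and, for $j\ge1$, $\mu_{2j-1}=\la-j\al^\vee$, $\mu_{2j}=\la+j\al^\vee$. Using the displayed identity one checks immediately that $\lag\mu_i,\al\rag$ is nonzero for every $i\ge1$ and alternates in sign as $i$ grows; plugging these pairings into the formula of Lemma~\ref{lem:lengthalpha} then yields $\ell_\al(w_{\mu_i})=i$ for all $i\ge0$, in both cases. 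This already establishes the last assertion, about the values of $\ell_\al$.

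Next I would prove the strict Bruhat inequalities $w_{\mu_{i-1}}<w_{\mu_i}$ for each $i\ge1$. The key observation is that $\mu_{i-1}$ is recovered from $\mu_i$ by subtracting $i\al^\vee$ when $\lag\mu_i,\al\rag>0$ and by adding $i\al^\vee$ when $\lag\mu_i,\al\rag<0$, while in each case the shift $i$ equals $\ell_\al(w_{\mu_i})$, the largest value admitted in Lemma~\ref{lem:tranwlambda}. Applying that lemma with base point $\mu_i$ and $k=i$ therefore gives $w_{\mu_i}>w_{\mu_i\mp i\al^\vee}=w_{\mu_{i-1}}$ outright (the inequality it produces is strict once $k\ge1$); chaining these inequalities over $i\ge1$ produces the two displayed increasing sequences.

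The only point requiring care is the bookkeeping in the previous paragraph: one must keep the sign of $\lag\mu_i,\al\rag$ synchronized with the direction of the translation $k\al^\vee$ and with the correct branch of Lemmas~\ref{lem:lengthalpha} and \ref{lem:tranwlambda}. Since both of those lemmas are already in hand, this is a routine finite verification rather than a genuine obstacle.
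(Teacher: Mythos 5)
Your proposal is correct and follows essentially the same route as the paper: compute $\ell_\al(w_{\mu_i})=i$ from Lemma~\ref{lem:lengthalpha} and then compare consecutive terms by applying Lemma~\ref{lem:tranwlambda} with base point the larger element $\mu_i$ and the maximal admissible shift $k=\ell_\al(w_{\mu_i})$. The only difference is cosmetic: you carry out the bookkeeping uniformly for both cases, whereas the paper writes out only the case $\lag\la,\al\rag=0$ and leaves $\lag\la,\al\rag=1$ to the reader.
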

\begin{proof} We only prove the case when $\lag\la,\al\rag=0$. Consider $\la+k\al^\vee$ and $\la-k\al^\vee$ with $k> 0$, then $\lag\la-k\al^\vee,\al\rag=-2k<0$, and $2k=\ell_\al(w_{\la-k\al^\vee})$, so by Lemma \ref{lem:tranwlambda}, 
\[
w_{\la-k\al^\vee}>w_{\la-k\al^\vee+2k\al^\vee}=w_{\la+k\al^\vee}.
\]

Finally, consider ${\la-k\al^\vee}$ and $\la+(k+1)\al^\vee$, $k\ge 0$, then $\lag\la+(k+1)\al^\vee,\al\rag=2(k+1)\ge 2$, and $\ell_\al(w_{\la+(k+1)\al^\vee})=2k+1$, so by Lemma \ref{lem:tranwlambda},
\[
w_{\la+(k+1)\al^\vee}\ge w_{\la+(k+1)\al^\vee-(2k+1)\al^\vee}=w_{\la-k\al^\vee}. 
\qedhere\]
\end{proof}

\newcommand{\arxiv}[1]
{\texttt{\href{http://arxiv.org/abs/#1}{arXiv:#1}}}
\newcommand{\doi}[1]
{\texttt{\href{http://dx.doi.org/#1}{doi:#1}}}
\renewcommand{\MR}[1]
{\href{http://www.ams.org/mathscinet-getitem?mr=#1}{MR#1}}


\end{document}